\numberwithin{equation}{section}
\numberwithin{figure}{section}
\theoremstyle{plain}
\newtheorem{thm}{\protect\theoremname}[section]
\theoremstyle{plain}
\newtheorem{cor}[thm]{\protect\corollaryname}
\theoremstyle{remark}
\newtheorem{rem}[thm]{\protect\remarkname}
\theoremstyle{plain}
\newtheorem{conjecture}[thm]{\protect\conjecturename}
\theoremstyle{definition}
\newtheorem{problem}[thm]{\protect\problemname}
\theoremstyle{plain}
\newtheorem{lem}[thm]{\protect\lemmaname}
\theoremstyle{definition}
\newtheorem{defn}[thm]{\protect\definitionname}
\theoremstyle{remark}
\newtheorem*{rem*}{\protect\remarkname}
\theoremstyle{definition}
\newtheorem{example}[thm]{\protect\examplename}
\theoremstyle{plain}
\newtheorem{prop}[thm]{\protect\propositionname}
\theoremstyle{plain}
\newtheorem{fact}[thm]{\protect\factname}
\providecommand{\conjecturename}{Conjecture}
\providecommand{\corollaryname}{Corollary}
\providecommand{\definitionname}{Definition}
\providecommand{\examplename}{Example}
\providecommand{\factname}{Fact}
\providecommand{\lemmaname}{Lemma}
\providecommand{\problemname}{Problem}
\providecommand{\propositionname}{Proposition}
\providecommand{\remarkname}{Remark}
\providecommand{\theoremname}{Theorem}
\begin{document}
\global\long\def\F{\mathcal{F} }%
\global\long\def\Aut{\mathrm{Aut}}%
\global\long\def\C{\mathbb{C}}%
\global\long\def\H{\mathcal{H}}%
\global\long\def\U{\mathcal{U}}%
\global\long\def\P{\mathcal{P}}%
\global\long\def\ext{\mathrm{ext}}%
\global\long\def\hull{\mathrm{hull}}%
\global\long\def\triv{\mathrm{triv}}%
\global\long\def\Hom{\mathrm{Hom}}%

\global\long\def\trace{\mathrm{tr}}%
\global\long\def\End{\mathrm{End}}%

\global\long\def\L{\mathcal{L}}%
\global\long\def\W{\mathcal{W}}%
\global\long\def\E{\mathbb{E}}%
\global\long\def\SL{\mathrm{SL}}%
\global\long\def\R{\mathbb{R}}%
\global\long\def\Z{\mathbf{Z}}%
\global\long\def\rs{\to}%
\global\long\def\A{\mathcal{A}}%
\global\long\def\a{\mathbf{a}}%
\global\long\def\rsa{\rightsquigarrow}%
\global\long\def\D{\mathbf{D}}%
\global\long\def\b{\mathbf{b}}%
\global\long\def\df{\mathrm{def}}%
\global\long\def\eqdf{\stackrel{\df}{=}}%
\global\long\def\ZZ{\mathcal{Z}}%
\global\long\def\Tr{\mathrm{Tr}}%
\global\long\def\N{\mathbb{N}}%
\global\long\def\std{\mathrm{std}}%
\global\long\def\HS{\mathrm{H.S.}}%
\global\long\def\e{\varepsilon}%
\global\long\def\d{\mathbf{d}}%
\global\long\def\AA{\mathbf{A}}%
\global\long\def\BB{\mathbf{B}}%
\global\long\def\u{\mathbf{u}}%
\global\long\def\v{\mathbf{v}}%
\global\long\def\spec{\mathrm{spec}}%
\global\long\def\Ind{\mathrm{Ind}}%
\global\long\def\half{\frac{1}{2}}%
\global\long\def\Re{\mathrm{Re}}%
\global\long\def\Im{\mathrm{Im}}%
\global\long\def\p{\mathfrak{p}}%
\global\long\def\j{\mathbf{j}}%
\global\long\def\uB{\underline{B}}%
\global\long\def\tr{\mathrm{tr}}%
\global\long\def\rank{\mathrm{rank}}%
\global\long\def\K{\mathbf{K}}%
\global\long\def\hh{\mathcal{H}}%
\global\long\def\h{\mathfrak{h}}%

\global\long\def\EE{\mathcal{E}}%
\global\long\def\PSL{\mathrm{PSL}}%
\global\long\def\G{\mathcal{G}}%
\global\long\def\Int{\mathrm{Int}}%
\global\long\def\acc{\mathrm{acc}}%
\global\long\def\awl{\mathsf{awl}}%
\global\long\def\even{\mathrm{even}}%
\global\long\def\z{\mathbf{z}}%
\global\long\def\id{\mathrm{id}}%
\global\long\def\CC{\mathcal{C}}%
\global\long\def\cusp{\mathrm{cusp}}%
\global\long\def\new{\mathrm{new}}%

\global\long\def\LL{\mathbb{L}}%
\global\long\def\M{\mathbf{M}}%
\global\long\def\I{\mathcal{I}}%
\global\long\def\X{X}%
\global\long\def\free{\mathbf{F}}%
\global\long\def\into{\hookrightarrow}%
\global\long\def\Ext{\mathrm{Ext}}%
\global\long\def\B{\mathcal{B}}%
\global\long\def\Id{\mathrm{Id}}%
\global\long\def\Q{\mathbb{Q}}%

\global\long\def\O{\mathcal{T}}%
\global\long\def\Mat{\mathrm{Mat}}%
\global\long\def\NN{\mathrm{NN}}%
\global\long\def\nn{\mathfrak{nn}}%
\global\long\def\Tr{\mathrm{Tr}}%
\global\long\def\SGRM{\mathsf{SGRM}}%
\global\long\def\m{\mathbf{m}}%
\global\long\def\n{\mathbf{n}}%
\global\long\def\k{\mathbf{k}}%
\global\long\def\GRM{\mathsf{GRM}}%
\global\long\def\vac{\mathrm{vac}}%
\global\long\def\SS{\mathcal{S}}%
\global\long\def\red{\mathrm{red}}%
\global\long\def\V{V}%

\title{Strongly convergent unitary representations of right-angled Artin
groups}
\author{Michael Magee and Joe Thomas}
\maketitle
\begin{abstract}
{\footnotesize{}We prove using a novel random matrix model that all
right-angled Artin groups have a sequence of finite dimensional unitary
representations that strongly converge to the regular representation.
We deduce that this result applies also to: the fundamental group
of a closed hyperbolic manifold that is either three dimensional or
standard arithmetic type, any Coxeter group, and any word-hyperbolic
cubulated group. }{\footnotesize\par}

{\footnotesize{}One strong consequence of these results is that any
closed hyperbolic three-manifold has a sequence of finite dimensional
flat Hermitian vector bundles with bottom of the spectrum of the Laplacian
asymptotically at least 1. }{\footnotesize\par}

{\footnotesize{}\tableofcontents{}}{\footnotesize\par}
\end{abstract}

\section{Introduction}

For $N\in\N$ let $\U(N)$ denote the group of $N\times N$ complex
unitary matrices. For a discrete group $G$, we denote by $\lambda_{G}:G\to\End(\ell^{2}(G))$
the left regular representation. We say that a sequence of unitary
representations\\
$\{\rho_{i}:G\to\U(N_{i})\}_{i=1}^{\infty}$ \emph{strongly converge
to the regular representation }if for any $z\in\mathbb{C}[G]$, 
\[
\lim_{i\to\infty}\|\rho_{i}(z)\|=\|\lambda_{G}(z)\|.
\]
The norm on the left is the operator norm on $\C^{N_{i}}$ with respect
to the standard Hermitian metric, and the norm on the right is the
operator norm on $\ell^{2}(G)$. 

Let $\Gamma$ denote a finite simple graph and $G\Gamma$ denote the
corresponding right-angled Artin group (RAAG) generated by the vertices
$V(\Gamma)$ subject to the relations that vertices commute if and
only if they are joined by an edge in $\Gamma$. 
\begin{thm}
\label{thm:strong-converge-raags}For any finite simple graph $\Gamma$,
there exists a sequence of finite dimensional unitary representations
of $G\Gamma$ that strongly converge to the regular representation.
\end{thm}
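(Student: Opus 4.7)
The plan is to construct, for each $N \in \N$, a random unitary representation $\rho_N : G\Gamma \to \U(d_N)$ via an explicit matrix model adapted to $\Gamma$, and then to show that $\rho_N$ converges strongly to $\lambda_{G\Gamma}$ almost surely as $N \to \infty$. A natural candidate is a tensor-product model: index tensor factors by some combinatorial data attached to $\Gamma$ (for example, each vertex of $\Gamma$ together with each non-edge, possibly enriched by larger non-clique subsets). Set $H_s \eqdf \C^N$ on each piece and $\H_N \eqdf \bigotimes_s H_s$. For every vertex $v\in V(\Gamma)$, independently sample Haar unitaries $A_{v,s}\in\U(H_s)$ on those factors $H_s$ that ``contain'' $v$, and let $\rho_N(v)$ act as $A_{v,s}$ on those factors and trivially on the rest. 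Adjacent vertices should act on disjoint factors and hence commute, while non-adjacent vertices share a factor on which they act as freely independent Haar unitaries, so the RAAG relations hold and the universal property of $G\Gamma$ extends the assignment $v\mapsto \rho_N(v)$ to a genuine homomorphism.

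Convergence of the expected normalised trace $\E[\frac{1}{d_N}\Tr\rho_N(g)]\to\delta_{g=e_{G\Gamma}}$ for every $g\in G\Gamma$ is the next step. Since $\rho_N(g)$ factors as a tensor product of words indexed by the factors, the expectation splits as a product, and each factor involves only a small number of Haar unitaries whose normalised trace tends to the corresponding free-Haar-unitary value (by Weingarten calculus and asymptotic freeness). Matching the resulting product-indicator with the indicator of $g=e$ in $G\Gamma$ is a combinatorial task: one must check that the natural map from $G\Gamma$ into the product of its ``local'' projections along the chosen factors is injective, enriching the index set of factors as needed to separate elements (pairwise non-edge retracts alone, for instance, do not distinguish iterated commutators in a free group, so the indexing must be refined). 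Almost-sure concentration of the normalised trace around its mean will then follow from Lipschitz concentration of measure on a product of unitary groups equipped with Haar measure.

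The main obstacle is upgrading moment convergence to operator-norm convergence. The lower bound $\|\rho_N(z)\|\geq\|\lambda_{G\Gamma}(z)\|-o(1)$ follows from moment convergence and positivity, once the limit joint $*$-distribution of the $\rho_N(v)$ is identified with that of the $\lambda_{G\Gamma}(v)$ and hence the generated $C^*$-algebra in the limit is identified with $C^*_{\red}(G\Gamma)$. The sharp upper bound $\|\rho_N(z)\|\leq\|\lambda_{G\Gamma}(z)\|+o(1)$ is where the real work sits: I would apply the linearisation trick of Haagerup--Thorbj{\o}rnsen and the polynomial-convergence framework of Bordenave--Collins, reducing the problem to controlling the extreme spectrum of matrix-valued polynomials in the $\rho_N(v)$, and then handling those via a resolvent/Stieltjes-transform expansion driven by Weingarten calculus on each independent unitary factor. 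The ``novel'' technical difficulty is that the $\rho_N(v)$ are not Haar unitaries on a common space but tensor products of many Haar unitaries whose supports overlap exactly as dictated by $\Gamma$; the master inequalities and loop-/trace-method expansions must be set up to exploit the independence of the building blocks $A_{v,s}$ while respecting how tensor factors are coupled across vertices. Combining the resulting upper bound with the lower bound, and applying a Borel--Cantelli argument based on the concentration estimate from the previous paragraph, then produces almost-sure strong convergence $\rho_N\to\lambda_{G\Gamma}$, establishing Theorem~\ref{thm:strong-converge-raags}.
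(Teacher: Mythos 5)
There is a fatal structural flaw in your model, and it is exactly the obstruction this paper was designed to get around. In your construction each factor $H_s$ can only be ``shared'' by pairwise non-adjacent vertices: if two adjacent vertices $v\sim w$ both acted nontrivially on the same factor as independent Haar unitaries, they would not commute and $v\mapsto\rho_N(v)$ would not define a homomorphism of $G\Gamma$. So every index $s$ must be an anticlique of $\Gamma$, and the restriction of $\rho_N$ to the factor $H_s$ kills all vertices outside $s$, i.e.\ it factors through the free group on $s$. Consequently $\rho_N$ itself factors through the canonical map from $G\Gamma$ to a direct product of free groups, no matter how you ``enrich'' the index set. That map is never injective once $\Gamma$ contains an induced path $a-b-c-d$: the element $g=[[a,c],[b,d]]$ is nontrivial in $G\Gamma$ but dies in every homomorphism to a product of free groups (apply commutative transitivity of free groups coordinatewise). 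Hence $\rho_N(g)=\mathrm{Id}$ for all $N$, the normalized trace of $\rho_N(g)$ is $1$ rather than $0$, so even your moment convergence step fails; and taking $z=g-e\in\C[G\Gamma]$ gives $\|\rho_N(z)\|=0$ while $\|\lambda_{G\Gamma}(z)\|>0$, so strong convergence is impossible for this class of models. Your parenthetical remark that ``the indexing must be refined'' cannot be implemented: any refinement compatible with the RAAG relations keeps the factorization through a product of free groups.

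The paper's proof avoids this precisely by not taking a tensor product of independent one-channel matrices per vertex: each vertex gets a \emph{single} coupled GUE-type Hermitian matrix on the tensor product of all channels indexed by the non-edges at $v$, together with an auxiliary channel of much faster-growing dimension. The argument then proceeds in two limits: first, letting the auxiliary dimensions blow up and invoking Collins--Guionnet--Parraud to replace the randomness by a deterministic operator-coefficient model $L_v^{(m)}+L_v^{(m)*}$; second, letting $m\to\infty$ and identifying the limit, via an ultraproduct and the Crisp--Laca universality theorem for the graph Toeplitz algebra (the key estimate being $\|(L_v^{(m)})^*L_w^{(m)}\|\to0$ for non-adjacent $v,w$), with the algebra generated by right-angled semicircular elements. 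Right-angled free probability then shows $C_{\red}^{*}(G\Gamma)$ embeds in this limit with a faithful trace, and a functional-calculus plus net/Borel--Cantelli argument converts the Hermitian matrices into genuine finite dimensional unitary representations (genuine, not approximate, because adjacent vertices act in disjoint channels and hence commute exactly). If you want to salvage a unitary-based model in your spirit, you would need a single Haar unitary per vertex on the tensor product of its channels rather than a product of independent per-channel unitaries; proving strong convergence for that kind of coupled model is open (the Hermitian analogue is stated as an open problem in the paper), which is why the auxiliary channels and the two-step limit are introduced.
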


This result interpolates between $\Z^{r}$ (where it is straightforward
to prove using the Fourier transform) and finitely generated free
groups $\mathbf{F}_{r}$, where the result was obtained in a breakthrough
of Haagerup and Thorbj\o rnsen \cite[Thm. A]{HaagerupThr}. Theorem
\ref{thm:strong-converge-raags} is deduced from a random matrix result
--- Theorem \ref{thm:main-random-matrix} --- about random Hermitian
matrices in factors of tensor products who overlap in a way determined
by the graph $\Gamma$.

RAAGs are important building blocks in geometric group theory because
of their connection to CAT(0) cube complexes \cite{WiseRAAGs}. The
property of having finite dimensional unitary representations that
strongly converge to the regular representation is preserved by passing
to finite index supergroups \cite[Lemma 7.1]{louder2023strongly}
and arbitrary subgroups, so if a group $G$ virtually embeds into
a RAAG, then it also has this property. 
\begin{cor}
\label{cor:main-types}Let $G$ be one of the following types of groups.
\begin{enumerate}
\item \label{enu:The-fundamental-group-1}The fundamental group of a compact
hyperbolic three-manifold.
\item \label{enu:The-fundamental-group-2}The fundamental group of a `standard'\footnote{Here standard means that the fundamental group is a torsion-free arithmetic
lattice in $\mathbf{G}(\Q)$ where $\mathbf{G}$ is an algebraic group
over $\Q$ arising by restriction of scalars from an orthogonal group
over a totally real number field. } compact arithmetic hyperbolic manifold.
\item \label{enu:Any-Coxeter-group.}Any Coxeter group.
\item \label{enu:Any-word-hyperbolic-group}Any word-hyperbolic group acting
properly and cocompactly on a CAT(0) cube complex.
\end{enumerate}
Then there exists a sequence of finite dimensional unitary representations
of $G$ that strongly converge to the regular representation.
\end{cor}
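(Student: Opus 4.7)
The plan is to reduce everything to Theorem~\ref{thm:strong-converge-raags} via the two preservation properties already highlighted in the excerpt: strong convergence passes to arbitrary subgroups (by restriction of representations) and to finite-index supergroups by \cite[Lemma 7.1]{louder2023strongly}. Combining these, if $G$ has a finite-index subgroup that embeds into some right-angled Artin group $G\Gamma$, then $G$ inherits a sequence of finite-dimensional unitary representations strongly converging to the regular representation. Thus it suffices to show that each of the four listed classes of groups virtually embeds into a RAAG.

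The key external ingredient is the Haglund--Wise theorem that the fundamental group of any compact special cube complex embeds into a RAAG. Hence the corollary is reduced to showing that each group in the list is \emph{virtually special}, i.e.\ has a finite-index subgroup isomorphic to the fundamental group of a compact special cube complex. I would then handle the four cases as follows. For case~(\ref{enu:Any-word-hyperbolic-group}), Agol's theorem directly asserts that every word-hyperbolic group acting properly and cocompactly on a CAT(0) cube complex is virtually special. Case~(\ref{enu:The-fundamental-group-1}) reduces to case~(\ref{enu:Any-word-hyperbolic-group}) by combining the Kahn--Markovic construction of surface subgroups in closed hyperbolic 3-manifolds with the Bergeron--Wise cubulation of $\pi_{1}(M)$ via Sageev's dual cube complex, since $\pi_{1}(M)$ is word-hyperbolic. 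Case~(\ref{enu:The-fundamental-group-2}) is handled by Bergeron--Haglund--Wise, who cubulate the lattice using totally geodesic hypersurfaces coming from orthogonal subfields and verify virtual specialness. Case~(\ref{enu:Any-Coxeter-group.}) is a theorem of Haglund--Wise, who start from the Niblo--Reeves cubulation of a Coxeter group and prove virtual specialness of the resulting action.

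The main obstacle is really bookkeeping rather than new mathematics: the entire argument is a consequence of Theorem~\ref{thm:strong-converge-raags} once one inputs the deep virtual-specialness results cited above. The only subtle cases are (\ref{enu:Any-Coxeter-group.}), where an infinite Coxeter group need not be word-hyperbolic so Agol's theorem is not directly applicable and one must appeal to the more intrinsic Haglund--Wise analysis, and (\ref{enu:The-fundamental-group-2}) in dimensions greater than three, where one again cannot reduce to the word-hyperbolic cubulated paradigm and so must invoke Bergeron--Haglund--Wise. In either case no additional calculation is required beyond verifying that the cited theorems produce virtual embeddings into RAAGs, at which point the preservation properties close out the proof.
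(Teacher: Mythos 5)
Your proposal is correct and essentially identical to the paper's proof: the paper deduces the corollary from Theorem \ref{thm:strong-converge-raags} using the same two preservation properties (passing to arbitrary subgroups and to finite-index supergroups via \cite[Lemma 7.1]{louder2023strongly}), together with Agol and Haglund--Wise for Type \ref{enu:Any-word-hyperbolic-group}, Kahn--Markovic and Bergeron--Wise for Type \ref{enu:The-fundamental-group-1}, Bergeron--Haglund--Wise for Type \ref{enu:The-fundamental-group-2}, and Haglund--Wise for Type \ref{enu:Any-Coxeter-group.}. The only cosmetic difference is that the paper phrases Types \ref{enu:The-fundamental-group-2} and \ref{enu:Any-Coxeter-group.} as being contained in Type \ref{enu:Any-word-hyperbolic-group}, whereas you cite the virtual-specialness results for those classes directly, which, as you observe, is the more careful formulation for Coxeter groups that are not word-hyperbolic.
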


Type \ref{enu:The-fundamental-group-1} above is contained in Type
\ref{enu:Any-word-hyperbolic-group} by works of Kahn---Markovic
\cite{KM} and Bergeron---Wise \cite[Thm. 5.3]{BW}. Type \ref{enu:The-fundamental-group-2}
is contained in Type \ref{enu:Any-word-hyperbolic-group} by Bergeron---Haglund---Wise
\cite{BHW}. Type \ref{enu:Any-Coxeter-group.} is contained in Type
\ref{enu:Any-word-hyperbolic-group} by Haglund---Wise \cite{HW2}.
The fact that groups of Type \ref{enu:Any-word-hyperbolic-group}\emph{
}virtually embed into RAAGs is a result of Agol \cite{Agol} combined
with Haglund---Wise \cite{HW}.

Theorem \ref{thm:strong-converge-raags} implies that for all RAAGs
$G$ --- as well as all those groups appearing in Corollary \ref{cor:main-types}
--- the reduced $C^{*}$-algebra $C_{\red}^{*}(G)$ is \emph{matricial
field} (MF) in the sense of Blackadar and Kirchberg \cite[Def. 3.2.1]{BK}.
Hence our results here dramatically extend the known examples of such
groups from those covered by \cite{HaagerupThr,Collins2014,Hayes,TikuisisWhiteWinter,RainoneSchafhauser,louder2023strongly,schafhauser2023finite,BordenaveCollins3}
--- see Schafhauser \cite[Introduction]{schafhauser2023finite} for
a recent survey.

Since any RAAG $G\Gamma$ contains an obvious copy of the free group
$\mathbf{F}_{2}$ on two generators as soon as $\Gamma$ is not a
complete graph, it is non-amenable in this case. Hence by \cite[\S\S 5.14]{voic_quasidiag}
(see also \cite[Rmk. 8.6]{HaagerupThr}) Theorem \ref{thm:strong-converge-raags}
shows that the \emph{$\Ext(C_{\red}^{*}(G))$ is not a group }result
from \cite{HaagerupThr} holds as soon as the RAAG $G$ is not abelian
--- in contrast to the fact that\textbf{ }$\Ext(C_{\red}^{*}(\Z^{r}))=\Ext((S^{1})^{r})$
is a group by Brown, Douglas, and Fillmore \cite[Thm. 1.23]{BrownDouglasFillmore2}. 
\begin{rem}
\label{rem:connected-component}It is easy to see that $\Hom(G\Gamma,\U(n))$
is connected as a subspace of $(\C^{n\times n})^{V}$ with the Euclidean
topology, where the subspace is induced by the images of generators
under a homomorphism in $\Mat_{n\times n}(\C)$. Since restriction
to subgroups and induction to finite index supergroups induce continuous
maps on $\Hom(H,\U(N))$ with respect to the same (Euclidean) topologies,
it follows that all representations in Corollary \ref{cor:main-types}
are in the connected component of the trivial representation in $\Hom(G,\U(N))$.
\end{rem}

Another consequence of our results is the following theorem on the
spectral geometry of hyperbolic 3-manifolds.
\begin{thm}
\label{thm:spectral-gap}Let $M=\Lambda\backslash\mathbb{H}^{3}$
be a compact hyperbolic 3-manifold. There exist a sequence of finite
dimensional unitary representations $\{\pi_{i}:\Lambda\to\U(N_{i})\}_{i=1}^{\infty}$
such that the $\pi_{i}$-twisted Laplacians $\Delta_{\pi_{i}}$ have
spectrum contained in $[1-o_{i\to\infty}(1),\infty)$.
\end{thm}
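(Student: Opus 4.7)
The plan is to leverage the strongly convergent sequence $\{\pi_{i}:\Lambda\to\U(N_{i})\}$ furnished by Corollary \ref{cor:main-types} (case \ref{enu:The-fundamental-group-1}) and compare $\spec(\Delta_{\pi_{i}})$ to the $L^{2}$-spectrum of the scalar Laplacian on $\mathbb{H}^{3}$, which begins at $((3-1)/2)^{2}=1$. Fix $\varepsilon>0$ and choose a Schwartz function $\eta:\R\to[0,1]$ equal to $1$ on $[0,1-\varepsilon]$ and vanishing on $[1-\varepsilon/2,\infty)$. It suffices to show $\|\eta(\Delta_{\pi_{i}})\|\to0$ as $i\to\infty$, as this forces $\spec(\Delta_{\pi_{i}})\subset[1-\varepsilon,\infty)$ for all large $i$, and $\varepsilon$ is arbitrary.

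Next I would express $\eta(\Delta_{\pi})$ as a ``group-algebra'' sum. Fix a relatively compact fundamental domain $F\subset\mathbb{H}^{3}$ for $\Lambda$ and identify $L^{2}$-sections of the flat $\C^{N}$-bundle $V_{\pi}$ associated to $\pi:\Lambda\to\U(N)$ with $\pi$-equivariant functions $\mathbb{H}^{3}\to\C^{N}$, producing an isometry $L^{2}(M,V_{\pi})\cong L^{2}(F)\otimes\C^{N}$. The $\PSL_{2}(\C)$-invariant integral kernel $k_{\eta}(x,y)$ of $\eta(\Delta)$ on $\mathbb{H}^{3}\times\mathbb{H}^{3}$ decays faster than any polynomial in $d(x,y)$ by standard Schwartz functional calculus on a space of bounded geometry. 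Under the above identification,
\[
\eta(\Delta_{\pi})=\sum_{\gamma\in\Lambda}T_{\gamma}\otimes\pi(\gamma),\qquad T_{\gamma}f(x)=\int_{F}k_{\eta}(x,\gamma y)\,f(y)\,dy,
\]
where $\|T_{\gamma}\|$ also decays faster than any polynomial in $d(F,\gamma F)$.

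The key input is that strong convergence passes to tensor products with bounded-operator coefficients: for any finite $S\subset\Lambda$ and any bounded operators $\{A_{\gamma}\}_{\gamma\in S}$ on a separable Hilbert space,
\[
\lim_{i\to\infty}\Big\|\sum_{\gamma\in S}A_{\gamma}\otimes\pi_{i}(\gamma)\Big\|=\Big\|\sum_{\gamma\in S}A_{\gamma}\otimes\lambda_{\Lambda}(\gamma)\Big\|,
\]
a well-known strengthening of the finite-sum definition of strong convergence. Combining this with the super-polynomial tail estimate on $\|T_{\gamma}\|$ and the at-most-exponential growth of $\#\{\gamma:d(F,\gamma F)\le R\}$, a three-$\varepsilon$ argument (truncate the $\gamma$-sum to a ball of radius $R$, apply strong convergence to the truncation, then let $R\to\infty$) yields
\[
\lim_{i\to\infty}\|\eta(\Delta_{\pi_{i}})\|=\Big\|\sum_{\gamma\in\Lambda}T_{\gamma}\otimes\lambda_{\Lambda}(\gamma)\Big\|=\|\eta(\Delta_{\lambda_{\Lambda}})\|.
\]
Under $L^{2}(F)\otimes\ell^{2}(\Lambda)\cong L^{2}(\mathbb{H}^{3})$, $\Delta_{\lambda_{\Lambda}}$ is unitarily equivalent to $\Delta$ on $L^{2}(\mathbb{H}^{3})$, whose spectrum is $[1,\infty)$; since $\eta$ vanishes there, the right-hand side is $0$.

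I expect the main technical obstacle to be the rigorous passage from the finite-sum definition of strong convergence to the infinite series representing $\eta(\Delta_{\pi_{i}})$: this hinges on a uniform-in-$\pi$ tail bound derived from rapid decay of the Schwartz functional calculus of $\Delta$ on $\mathbb{H}^{3}$, combined with the operator-coefficient strengthening of strong convergence. The remaining ingredients (the choice of cutoff, the identification of $\Delta_{\lambda_{\Lambda}}$ with $\Delta$, and the diagonalization over $\varepsilon\downarrow 0$) are routine.
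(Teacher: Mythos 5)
Your overall strategy is the Hide--Magee style argument that the paper explicitly mentions as an alternative (``resolvent estimates adapted to higher dimensions''), but in the form you give it there is a genuine gap at the decisive analytic step: the tail of the series $\sum_{\gamma}T_{\gamma}\otimes\pi_{i}(\gamma)$ is not controlled by the bounds you invoke. For a smooth, compactly supported spectral cutoff $\eta$ the kernel $k_{\eta}$ of $\eta(\Delta_{\mathbb{H}^{3}})$ does \emph{not} decay faster than every exponential: writing the spherical inversion formula on $\mathbb{H}^{3}$, $k_{\eta}(r)=\frac{c}{\sinh r}h(r)$ with $h$ the Fourier transform of the Schwartz (but not analytic) function $\lambda\mapsto\eta(1+\lambda^{2})\lambda$, so the best available bound is $|k_{\eta}(r)|\lesssim_{N}e^{-r}(1+r)^{-N}$, whence $\|T_{\gamma}\|\lesssim_{N}e^{-d(F,\gamma F)}d(F,\gamma F)^{-N}$. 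Since $\#\{\gamma:d(F,\gamma F)\leq R\}\asymp e^{2R}$ in $\mathbb{H}^{3}$, the sum $\sum_{\gamma}\|T_{\gamma}\|$ diverges, so ``super-polynomial decay versus at most exponential growth'' does not close the three-$\varepsilon$ truncation, and even the identity $\eta(\Delta_{\pi})=\sum_{\gamma}T_{\gamma}\otimes\pi(\gamma)$ needs a convergence argument you do not have. This is exactly why Hide--Magee work with resolvents $(\Delta-s(2-s))^{-1}$ at large $\mathrm{Re}(s)$ (where the kernel decays like $e^{-sd}$ and beats $e^{2d}$) and then run a continuation/Fredholm argument to reach the spectral window $[0,1)$; that machinery, in dimension three, is precisely what the paper says it is avoiding. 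A second, more minor, inaccuracy: strong convergence does \emph{not} upgrade to coefficients that are arbitrary bounded operators on a separable Hilbert space ($B(H)$ is not exact); the upgrade is valid for coefficients in an exact $C^{*}$-algebra, e.g.\ the compacts, which is what Proposition \ref{prop:rep-conv} uses and which suffices for your Hilbert--Schmidt $T_{\gamma}$ --- but as stated your ``well-known strengthening'' is false.

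For comparison, the paper's proof sidesteps the convergence problem entirely: it works with compactly supported bi-$\mathrm{PSU}_{2}(\C)$-invariant test functions $f_{T}$ on $\PSL_{2}(\C)$, so that $\rho_{i}(f_{T})\cong\sum_{g}a_{f_{T}}(g)\otimes\pi_{i}(g)$ is a \emph{finite} sum (cocompactness of $\Lambda$), applies the operator-coefficient strong convergence (Proposition \ref{prop:rep-conv}, using nuclearity of $\mathcal{K}$) to get $\|\rho_{i}(f_{T})\|\to\|\rho_{\infty}(f_{T})\|$, and then detects low spectrum representation-theoretically: a Kunze--Stein bound gives $\|\rho_{\infty}(f_{T})\|\leq C_{p}e^{2T/p}$, while a complementary series $\mathcal{C}^{u}$, $u\geq\eta$, inside $\rho_{i}=\Ind_{\Lambda}^{\PSL_{2}(\C)}\pi_{i}$ would force $\|\rho_{i}(f_{T})\|\gtrsim e^{T(1+u/2)}$ via the spherical matrix coefficient, a contradiction for large $T$. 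If you want to salvage your route you must replace the Schwartz cutoff by kernels with genuine exponential decay (high powers of the resolvent far from the spectrum, plus analytic continuation) or by finite-propagation-speed kernels combined with a quantitative comparison in the spirit of the paper's $f_{T}$ argument.
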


In fact, in the context of Theorem \ref{thm:spectral-gap}, random
finite dimensional unitary representations a.s. enjoy the conclusion
on the spectral gap, where `random' refers to one of the following
random models:
\begin{itemize}
\item Pick a finite index subgroup of $\Lambda'\leq\Lambda$ and an embedding
$\Lambda'\hookrightarrow G\Gamma$ for some finite simple graph $\Gamma$.
\item Construct a random unitary representation of $G\Gamma$ from the random
Hermitian matrix model of $\S\S$\ref{subsec:The-random-matrix} and
the application of the functional calculus that happens in the proof
of Theorem \ref{thm:strong-converge-raags}.
\item Pull back this random unitary representation to $\Lambda'$ and then
induce to a representation of $\Lambda$.
\end{itemize}
Although this random model is complicated and certainly not canonical,
we do believe that it should behave like a `generic unitary representation'
of $\Lambda$ modulo Remark \ref{rem:connected-component} --- this
is a heuristic and not a precise mathematical statement.

Even though Theorem \ref{thm:spectral-gap} would certainly follow
from Corollary \ref{cor:main-types} and the arguments of Hide and
the first named author (M.M.) from \cite{HM21}, with resolvent estimates
adapted to higher dimensions, that approach is complicated by the
problem of dealing with cusps, which is not present here. In $\S$\ref{sec:Proof-of-Theorem-sg}
we follow a philosophically similar but less involved method based
on representation theory to prove Theorem \ref{thm:spectral-gap}.

When Corollary \ref{cor:main-types} applies to a hyperbolic manifold
$M$, it also gives results about the $\rho_{i}$-twisted Laplace-de
Rham operators on differential $p$-forms. Since these results require
estimates on matrix coefficients of general complementary series representations
of $\mathrm{SO}(d,1)$ that are quite far away from the spirit of
this paper we do not claim them here. They will appear in a new forthcoming
work joint with Edwards and Hide.

Theorem \ref{thm:spectral-gap} is inspired by, and offers a continuation
to, the following line of results in the setting of hyperbolic 3-manifolds.
\begin{itemize}
\item There exists sequences of graphs with fixed degrees and number of
vertices tending to infinity with optimal two sided spectral gap of
the Laplacian by Lubotzky---Phillips---Sarnak \cite{LPS} and Margulis
\cite{Margulis}. These are called \emph{Ramanujan graphs.}
\item Every finite graph has a sequence of covering spaces with optimal
relative one-sided spectral gap by the resolution of weak Bilu---Linial
conjecture by Marcus---Spielman---Srivastava \cite{MSSI} --- see
also Hall---Puder---Sawin \cite{HallPuderSawin}.
\item For any finite graph, uniformly random degree $n$ covers asymptotically
almost surely have asymptotically optimal two-sided relative spectral
gaps by a result of Bordenave---Collins \cite{BordenaveCollins}
--- see also Friedman \cite{FriedmanRelative}, Puder \cite{PUDER},
and Bordenave \cite{bordenave2015new}.
\item If $M=\Gamma\backslash\mathbb{H}^{2}$ is either a conformally compact
infinite area, or non-compact finite area hyperbolic surface, then
uniformly random degree $n$ covers of $M$ asymptotically almost
surely have asymptotically optimal relative spectral gaps of the Laplacian
\cite{MageeNaud2,HM21}. In the case of infinite area, the relative
spectral gap is optimal.
\item If $M=\Gamma\backslash\mathbb{H}^{2}$ is closed, there exist a sequence
of covering spaces of $M$ with asymptotically optimal relative spectral
gaps \cite{louder2023strongly}.
\end{itemize}
To explain more the connection to these results, unitary representations
of $\Gamma$ correspond to covering spaces of $M$ whenever the representation
factors as
\begin{equation}
\Gamma\to S_{N}\xrightarrow{\mathrm{std}}\U(N-1)\label{eq:factoring}
\end{equation}
where $S_{N}$ is the group of permutations of $N$ letters, and $\mathrm{std}$
is the $N-1$ dimensional irreducible component of the representation
of $S_{N}$ by 0-1 matrices. The natural conjecture that this paper
leaves open is the following.
\begin{conjecture}
For any closed hyperbolic 3-manifold $M$, there exist a sequence
$M_{i}$ of covering spaces of $M$ with 
\[
\spec(\Delta_{M_{i}})\cap\left[0,1-o_{i\to\infty}(1)\right)=\spec(\Delta_{M})\cap\left[0,1-o_{i\to\infty}(1)\right)
\]
where the equality respects multiplicities.
\end{conjecture}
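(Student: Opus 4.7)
The plan is to upgrade Theorem \ref{thm:spectral-gap} by arranging the unitary representations $\pi_i$ to factor through symmetric groups as in \eqref{eq:factoring}, so that they correspond to honest finite covers $M_i \to M$. For such a cover of degree $N$, the Laplacian spectrum of $M_i$ decomposes as the disjoint union, with multiplicity, of $\spec(\Delta_M)$ (from the trivial summand) and the spectrum of $\Delta_{\std \circ \phi}$ where $\phi : \Lambda \to S_N$ is the associated homomorphism. Thus the conjecture reduces to producing covers whose $\std$-twisted Laplacians have spectrum contained in $[1-o(1), \infty)$, which is the precise output of Theorem \ref{thm:spectral-gap} once its representations are arranged to be of permutation type.

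First I would reduce to producing permutation representations $\rho_i : G\Gamma \to S_{N_i}$ of the ambient RAAG that strongly converge to $\lambda_{G\Gamma}$. Here $\Gamma$ is such that a finite-index $\Lambda' \leq \Lambda$ embeds in $G\Gamma$, by Corollary \ref{cor:main-types} and the Agol--Haglund--Wise virtual specialization theorem. Restriction of $\rho_i$ to $\Lambda'$ preserves both strong convergence and the permutation property by elementary arguments, while induction from $\Lambda'$ to $\Lambda$ preserves strong convergence by \cite[Lemma 7.1]{louder2023strongly} and preserves the permutation property because induced representations of permutation representations are themselves permutation representations (corresponding geometrically to composing two finite covers).

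Second, I would try to exhibit a random model of commuting tuples $(\sigma_v)_{v \in V(\Gamma)} \in S_N^{V(\Gamma)}$ with $\sigma_v \sigma_w = \sigma_w \sigma_v$ whenever $vw \in E(\Gamma)$, for which the associated random homomorphism $G\Gamma \to S_N$ strongly converges to the regular representation asymptotically almost surely. This is the permutation analogue of Theorem \ref{thm:strong-converge-raags} and would generalize Bordenave's theorem \cite{bordenave2015new} from $\free_r$ to $G\Gamma$. The analytic engine would plausibly be a non-backtracking walk expansion on the Cayley graph of $G\Gamma$, indexed by a transversal of the commutation equivalence on reduced words, combined with matrix concentration on $S_N$ in the spirit of Bordenave--Collins \cite{BordenaveCollins}.

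The main obstacle is controlling the law of a permutation conditioned to commute with others. Centralizer sizes in $S_N$ depend sharply on cycle type, so a commuting tuple is very far from a product measure and concentrates on atypical cycle structures. Already for $\Gamma = K_2$, where $G\Gamma \cong \Z^2$, two commuting uniform permutations share macroscopic joint orbits and the resulting representation does not, on its own, strongly converge to $\lambda_{\Z^2}$ without additional randomization. A possible workaround is to sample independent permutations within each maximal clique of $\Gamma$ and then glue them across shared vertices with extra twists --- random conjugations, or character-theoretic adjustments compatible with the clique structure --- that restore asymptotic freeness between generators from non-adjacent cliques. Designing and analyzing such a model, and in particular proving the requisite asymptotic freeness, is the crux of the problem and may require new analytic input beyond the Hermitian random matrix methods of this paper.
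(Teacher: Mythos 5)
This statement is not a theorem of the paper: it is stated explicitly as ``the natural conjecture that this paper leaves open,'' and the authors give no proof of it. Your proposal does not close it either. The first, reductive step (permutation representations correspond to covers, $\C^{N}=\C\oplus\std$, so it suffices to make the $\std$-twisted Laplacians have spectrum in $[1-o(1),\infty)$) is the standard and correct way to read the conjecture, but the entire analytic content --- producing permutation representations of a RAAG (or of $\Lambda$ directly) whose standard parts strongly converge to the regular representation, i.e.\ a Bordenave-type theorem for commuting tuples of random permutations indexed by $\Gamma$ --- is precisely the open problem. Your second and third steps are a research programme, not an argument: you yourself identify that commuting random permutations are far from product measure already for $\Z^{2}$, and the proposed ``glue independent permutations on cliques with extra twists'' is speculative, with no construction and no proof of the requisite asymptotic freeness. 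The paper's Hermitian model evades this exactly because Gaussian matrices in overlapping tensor factors commute by construction, and no analogue is known on $S_{N}$; so the gap you flag as ``the crux'' is a genuine, unfilled gap, not a technical loose end.

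There is also a flaw in the reduction itself that you should be aware of. Agol--Haglund--Wise only gives a finite-index subgroup $\Lambda'\leq\Lambda$ embedding in a RAAG, so your covers $M_{i}$ are obtained by inducing from $\Lambda'$, hence they factor through the fixed intermediate cover $M'=\Lambda'\backslash\mathbb{H}^{3}$. The standard part of the induced permutation representation then contains the fixed finite-dimensional summand $\C[\Lambda/\Lambda']\ominus\C$, and correspondingly $\spec(\Delta_{M_{i}})\supseteq\spec(\Delta_{M'})$, which may contain eigenvalues in $(0,1)$ (or extra multiplicity) not present in $\spec(\Delta_{M})$. Thus even granting the missing permutation-model theorem for RAAGs, your construction would prove the conjecture only for manifolds whose fundamental group itself embeds in a RAAG (equivalently, a version of the conjecture relative to $M'$), not for an arbitrary closed hyperbolic $3$-manifold $M$ as stated. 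This is one reason the unitary result of Theorem \ref{thm:spectral-gap} does not upgrade to covers by soft arguments, and why the conjecture remains open.
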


In fact the same should be true without the small $o$ terms but this
is not even known for closed hyperbolic surfaces so this conjecture
seems well out of reach at the moment.

\subsection{The random Hermitian matrix model\label{subsec:The-random-matrix}}

Let $\SGRM(n,\sigma^{2})$ denote the class of $n\times n$ complex
self-adjoint random matrices $(X_{ij})_{i,j=1}^{n}$ for which
\[
(X_{ii})_{i},\,(\sqrt{2}\Re(X_{ij}))_{i<j},\,(\sqrt{2}\Im(X_{ij}))_{i<j}
\]
are i.i.d. standard real normal random variables in $\mathcal{N}(0,\sigma^{2})$. 

Let $V=V(\Gamma)$ and let $F$ denote the set of pairs of distinct
elements of $\V$ that are not edges. For $v\in V$ let $F(v)\subset F$
denote those non-edges containing $v$. Let $m$ be a dimension parameter.
Let $(\C^{m}){}^{\otimes F}\eqdf\bigotimes_{f\in F}\C^{m}$. We think
of a factor of this tensor product as a \emph{channel} --- there
is a channel $\C^{m}$ for each pair of non-commuting vertices of
$\Gamma$. For $F'\subset F$ let $(\C^{m}){}^{\otimes F'}$ denote
the tensor factor of $(\C^{m}){}^{\otimes F}$ consisting of the $F'$
channels. 

For each $v\in V$ consider a random Hermitian matrix of the form
\[
Y_{v}\eqdf\hat{Y}_{v}\otimes\id_{(\C^{m}){}^{\otimes F\backslash F(v)}}
\]
 where $\hat{Y}_{v}\in\End((\C^{m}){}^{\otimes F(v)})$ is in $\SGRM(m^{|F(v)|},m^{-|F(v)|})$.
Here we take the liberty of writing tensor products with permuted
factors when it is clear what we mean. The following problem should
have a solution but we do not know how to obtain it\footnote{We also tried to solve the easier version of this problem, where the
channels are of the form $\C^{m(v)}$ (i.e. with different dimensions),
but did not succeed.}.
\begin{problem}
Prove that if $\Gamma$ has no vertex that is central in $G\Gamma$,
a.s. for any n.c. polynomial $p$ in $|V|$ variables
\[
\lim_{m\to\infty}\|p(Y_{v}\,:\,v\in V)\|=\|p(s_{v}:v\in V)\|_{\O_{\Gamma}},
\]
where $\O_{\Gamma}$ is the universal Toeplitz algebra associated
to $\Gamma$ defined in $\S\S$\ref{subsec:Universal-Toeplitz-algebras}
and $s_{v}$ are the semicircular elements of $\O_{\Gamma}$ defined
in $\S\S$\ref{subsec:A-subalgebra-generated}.
\end{problem}

To get traction we change our random model slightly. The idea is to
add an extra auxiliary channel for each vertex in $\V$ --- the dimensions
of these channels will go to infinity fastest. Let 
\begin{equation}
\K:V\to\N\label{eq:K-def}
\end{equation}
 be a function prescribing these new dimension parameters and let
\[
\C^{\otimes\K}\eqdf\bigotimes_{v\in V}\C^{\K(v)}.
\]
For $V'\subset V$ let $\C_{V'}^{\otimes\K}$ denote the tensor factor
of $\C^{\otimes\K}$ corresponding to the tensor product of $\C^{\K(v)}$
over elements of $V'$. For each $v\in V$ let 
\begin{align}
X_{v}^{(m,\K)} & =\tilde{X}_{v}\otimes\id_{(\C^{m}){}^{\otimes F\backslash F(v)}}\otimes\id_{\C_{V\backslash\{v\}}^{\K}}\in\End((\C^{m}){}^{\otimes F}\otimes\C^{\otimes\K}),\label{eq:rm00}\\
\tilde{X}_{v} & \in\End((\C^{m}){}^{\otimes F(v)}\otimes\C^{\K(v)})\nonumber 
\end{align}
As the reader can see, throughout the paper we will repeatedly use
canonical isomorphisms $\End(V\otimes W)\cong\End(V)\otimes\End(W)$
to describe or refer to objects. Above, we take $\{\tilde{X}_{v}:v\in V\}$
to be independent and
\[
\tilde{X}_{v}\in\SGRM\left(\K(v)m^{|F(v)|},\frac{1}{\K(v)m^{|F(v)|}}\right).
\]
\emph{This is the random model used in the rest of the paper.}

\subsection{Overview of the proof\label{subsec:Overview-of-the}}

This section is designed to give an overview of the proof of Theorem
\ref{thm:strong-converge-raags} and some commentary to explain the
inherent difficulties and the novelty of this paper.

We begin by digression to the fact that in the case that $G\Gamma$
is a product of free groups, Theorem \ref{thm:strong-converge-raags}
--- without any of its embellishments\footnote{e.g. almost sure random matrix results.}
--- can be deduced with a bit of effort from the work of Haagerup
and Thorbj\o rnsen \cite{HaagerupThr}. This relies on the basic
but important fact that one can form representations of Cartesian
products of groups by taking tensor products.

So one might naively dream that all RAAGs embed in products of free
groups and obtain Theorem \ref{thm:strong-converge-raags} in this
manner. Unfortunately, this dream is far from true\footnote{Here we specifically thank Bram Petri and Lars Louder for enlightening
conversations.} and the reason for its failure highlights one of the novelties of
this paper. Indeed, any product of free groups is \emph{commutative
transitive}, meaning that if $a$ commutes with $b$ and $b$ commutes
with $c$, then $b$ is the identity or $a$ commutes with $c$. On
the other hand, if $\Gamma$ has vertices $a$, $b$, $c$, and $d$
with edges between $a$ and $b$, $b$ and $c$, $c$ and $d$, and
no other edges between these vertices, then $[[a,c],[b,d]]$ is non-identity
in $G\Gamma$ but is killed by any homomorphism from $G\Gamma$ to
any product of free groups.

So to obtain Theorem \ref{thm:strong-converge-raags} for general
$\Gamma$ we have to come up with a random matrix model that has the
potential to be asymptotically strongly not commutative transitive,
which is a departure from previous works. This matrix model is the
one described in the previous section.

What we work with for the bulk of the paper is the random matrix model
defined in $\S\S$\ref{subsec:The-random-matrix} with a random Hermitian
matrix for each vertex of $\Gamma$. The Hermitian matrices of $v$
and $w$ commute if $v$ and $w$ are connected in $\Gamma$. The
random matrices depend on a dimension $m$ and a further set of dimension
parameters $\K$ as in (\ref{eq:K-def}). We want to prove that if
we let these dimension parameters tend to infinity in an appropriate
way we have strong convergence to some specific limit. 

We let the parameters in $\mathbf{K}$ tend to infinity much faster
than $m$ and will compare this to the result of letting $\K$ tend
to infinity while $m$ is fixed. By thinking carefully about the `blocks'
of SGRM matrices induced from splitting of the underlying vector space
as a tensor product (see $\S\S$\ref{subsec:Block-structure-of} for
this argument), we can relate the later strong limit of matrix models
--- in the strong sense of norm convergence --- to strongly convergent
limits of cartesian products of SGRM matrices. This brings us around
to basically the same thing we started thinking about (products of
free groups!), but for an entirely different reason. To get the limit
we want, with a clean a.s. random matrix result, we appeal to recent
breakthrough work of Collins---Guionnet---Parraud \cite{CollinsGuionnetParraud}.

In this limit, each $X_{v}^{(m,\K)}$ gets replaced by something ---
written in the form $L_{v}^{(m)}+L_{v}^{(m)*}$ --- that is very
roughly speaking the tensor product of some identity operator and
an $m|F(v)|\times m|F(v)|$ matrix with entries in a non-commutative
probability space \emph{all of whose upper triangular entries are
not only independent, but free from one another in the sense of free
probability theory}\footnote{To give some sense of why it might be helpful to have many free variables
(the number will tend to infinity as $m\to\infty)$, consider Voiculescu's
free central limit theorem \cite{VoiculescuSymmetries}.}\emph{. }In fact, the structure of $L_{v}^{(m)}+L_{v}^{(m)*}$ is
a little more complicated --- see equation (\ref{eq:L-def}). In
contrast, for \emph{any} $v\neq w$ the operator-valued entries of
$L_{v}^{(m)}$ and $L_{w}^{(m)}$ commute. The precise version of
this first limit is given in Theorem \ref{thm:first-convergence}.

Now let us explain the origin of the splitting $L_{v}^{(m)}+L_{v}^{(m)*}$.
When we apply Collins---Guionnet---Parraud, many independent SGRM
matrices get replaced by many independent semicircular random variables.
We realize these free semicircular random variables as sitting inside
a Cuntz-Toeplitz algebra with the vacuum state, with each semicircular
variable of form $\ell+\ell^{*}$ where $\ell$ is a creation operator.
The splitting of the limit of $X_{v}^{(m,\K)}$ as $L_{v}^{(m)}+L_{v}^{(m)*}$
is induced by the previous splitting of semicircular variables. At
this point of the proof all randomness has been removed and we move
on to the second strong limit.

The inherent difficulty of obtaining strong convergence results like
Theorem \ref{thm:strong-converge-raags} for general non-free groups
is the lack of either:
\begin{itemize}
\item Replacement of the theory of the $R$-transform that is used heavily
in e.g. \cite{HaagerupThr}, or
\item Replacement of the theory of the non-backtracking operator that is
essential in \cite{bordenave2022strong,bordenave2015new,BordenaveCollins,BordenaveCollins3}.
\end{itemize}
Here, we get around these\footnote{Philosophically, our approach is closest to $R$-transform methods,
bearing in mind that Voiculescu defines $R$-transforms in \cite{Vo1995}
essentially by recourse to Cuntz-Toeplitz algebras that are special
cases of our universal algebras.} by using a universality theorem for $C^{*}$-algebras due to Crisp
and Laca \cite{CrispLaca} (Theorem \ref{thm:universailty}). For
any free ultrafilter $\F$ on $\N$, by taking an ultraproduct along
$\F$ we extract from the sequence
\[
\{L_{v}^{(m)}:v\in V\}_{m=1}^{\infty}
\]
operators $\{\L_{v}:v\in V\}$ in an ultraproduct $C^{*}$-algebra.
We check that the universality theorem applies to the $C^{*}$-algebra
generated by the $\L_{v}$ and hence obtain our second strong limit.
For these we need to check properties \textbf{T1-T3 }of Theorem \ref{thm:universailty}
hold for the $\L_{v}$ and that they are isometries.

First we check that the $L_{v}^{(m)}$ are isometries (Lemma \ref{lem:isometry}).
\textbf{T1}, relating to commutativity of the variables,\textbf{ }is
easily seen to hold for the $L_{v}^{(m)}$ --- without taking any
limit.

The hardest property of the universality theorem to check is \textbf{T2},
relating to annihilation between certain variables,\textbf{ }and is
shown through the following statement that appears below as Proposition
\ref{prop:key}: For all non-adjacent vertices $v\not\sim w$ in $\V$,
\[
\lim_{m\to\infty}\|(L_{v}^{(m)})^{*}L_{w}^{(m)}\|=0.
\]
This is proved by an ad hoc combinatorial argument after taking powers
(amplification) to adequately bound

\[
\|[(L_{v}^{(m)})^{*}L_{w}^{(m)}(L_{w}^{(m)})^{*}L_{v}^{(m)}]^{p}\|
\]
for some fixed but sufficiently large $p\in\N$.

\textbf{T3}, a non-degeneracy condition,\textbf{ }is established by
showing in Lemma \ref{lem:T3} that each $\prod_{i=1}^{k}\left(1-L_{w_{i}}^{(m)}L_{w_{i}}^{(m)*}\right)$
has a fixed vector --- this property will pass to any ultralimit.

These results are brought together in Theorem \ref{thm:strong_convergence_technical_theorem}.
Although we forced convergence by using an ultrafilter, the fact that
the result does not depend on the ultrafilter implies that strong
convergence holds in the traditional sense.

The combination of the previous arguments yields almost sure strong
convergence of our random matrix model for some sequence of dimensional
parameters (Theorem \ref{thm:main-random-matrix}). Theorem \ref{thm:strong-converge-raags}
is deduced via a functional calculus argument shortly thereafter ---
this argument is similar to the one Haagerup and Thorbj\o rnsen use
to pass from Hermitan to unitary matrices but things are a little
subtler here. For example, \emph{(ibid.)} presupposes the existence
of a free semicircular system in a $C^{*}$-algebra with a \emph{faithful
}trace. Proposition \ref{prop:main-Cstar} supports the corresponding
argument in the current paper.

\subsection{Acknowledgments}

We warmly thank Beno\^{i}t Collins, Charles Bordenave, Alex Gamburd,
Will Hide, Lars Louder, Akihiro Miyagawa, Bram Petri, Doron Puder,
and Dimitri Shlyakhtenko for conversations, corrections, and references.

This material is based upon work supported by the National Science
Foundation under Grant No. DMS-1926686.

This project has received funding from the European Research Council
(ERC) under the European Union\textquoteright s Horizon 2020 research
and innovation programme (grant agreement No 949143).

\subsection{Notation}

For the rest of the paper $\Gamma$ is a fixed finite simple graph.
We write $V\eqdf V(\Gamma)$ and if $v\in V$, then we write $N(v)$
for the set of its neighbors. Sometimes we use the abbreviations:
f.d. (finite dimensional), n.c. (non-commutative), w.r.t. (with respect
to), a.s. (almost surely).

\section{$C^{*}$-algebraic framework}

\subsection{Universal Toeplitz algebras\label{subsec:Universal-Toeplitz-algebras}}

As explained in $\S\S$\ref{subsec:Overview-of-the}, as for Haagerup
and Thorbj\o rnsen, we work not with unitary matrices directly, but
with Hermitian matrices, and make a passage between the two by functional
calculus. This means our target limiting space is a system of semicircular
variables $s_{v}$ in a $C^{*}$-algebra $\SS_{\Gamma}$ with a faithful
trace $\tau$ and commutation relations based on the graph $\Gamma$,
i.e. $s_{v}s_{w}=s_{w}s_{v}$ for $v\sim w$ in $\Gamma$. 

Our approach to obtaining such a limit is via universality properties
of $C^{*}$-algebras. To access strong enough universality properties\footnote{Particularly, those that do not presuppose a faithful GNS representation
for some given state.} we work with a larger \emph{Toeplitz algebra} associated to $\Gamma$
that seems to have first been studied by Crisp and Laca in \cite{CrispLaca}.
Indeed, Crisp and Laca prove the following universality theorem that
we rely on in the sequel. We write $N(v)$ for the neighbors of $v$
in $\Gamma$.
\begin{thm}[{\cite[Thm. 24]{CrispLaca}}]
\label{thm:universailty}Let $\Gamma$ be a finite simple graph.
There is a unique isomorphism class of $C^{*}$-algebra $\O_{\Gamma}$
generated by elements $\{\,\ell_{v}\,:\,v\in\V\,\}$ that are isometries
($\ell_{v}^{*}\ell_{v}=1)$ and such that
\begin{description}
\item [{T1}] If $w\in N(v)$, then 
\begin{description}
\item [{$\ell_{w}\ell_{v}=\ell_{v}\ell_{w},$}]~
\item [{$\ell_{w}^{*}\ell_{v}=\ell_{v}\ell_{w}^{*}$,}]~
\end{description}
\item [{T2}] If $w\notin N(v)\cup\{v\}$, then $\ell_{w}^{*}\ell_{v}=0.$
\item [{T3}] For any $v_{1},\ldots,v_{k}\in V$
\begin{equation}
(1-\ell_{v_{1}}\ell_{v_{1}}^{*})(1-\ell_{v_{2}}\ell_{v_{2}}^{*})\cdots(1-\ell_{v_{k}}\ell_{v_{k}}^{*})\neq0.\label{eq:non-degeneracy}
\end{equation}
\end{description}
\end{thm}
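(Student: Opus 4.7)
The plan is to establish existence (by exhibiting a concrete model) and uniqueness (by a gauge-equivariant normal-form argument). For existence, I would construct a Fock model. Let $M_\Gamma$ denote the right-angled Artin monoid generated by $V$ subject to $vw = wv$ whenever $v \sim w$; as a trace monoid, $M_\Gamma$ admits a unique Cartier--Foata normal form and is left cancellative. Set $\mathcal{H} \eqdf \ell^2(M_\Gamma)$ with orthonormal basis $\{e_w : w \in M_\Gamma\}$ and define $\ell_v e_w \eqdf e_{vw}$. Left cancellativity gives that each $\ell_v$ is an isometry; T1 is immediate from the monoid relations; T3 follows from the fact that $e_1$ is annihilated by each $\ell_v^*$, hence fixed by every factor $1 - \ell_v \ell_v^*$ (and these factors are easily seen to commute pairwise using T2); and T2 reduces to the observation that if $w \neq v$ and $w \not\sim v$, then no element of $M_\Gamma$ beginning with $v$ in Cartier--Foata form can also begin with $w$, so $\ell_w^* \ell_v e_u = \ell_w^* e_{vu} = 0$.

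For uniqueness, let $A$ be any $C^*$-algebra generated by isometries $\{a_v : v \in V\}$ satisfying T1--T3, and let $B$ denote the Fock model. The universal $C^*$-algebra defined by the polynomial relations (isometry together with T1 and T2) surjects onto both $A$ and $B$; it suffices to show both surjections are isomorphisms, and I would handle both uniformly. Using T1 and T2, any word in the generators and their adjoints reduces, by induction on length, to an expression of the form
\[
\ell_\alpha \Bigl(\prod_{u \in S}(1 - \ell_u \ell_u^*)\Bigr) \ell_\beta^*,
\]
with $\alpha, \beta$ in Cartier--Foata normal form. The gauge action of $\mathbb{T}^V$ sending $\ell_v \mapsto z_v \ell_v$ (which exists on the universal algebra by functoriality since the relations are gauge-invariant) yields a faithful conditional expectation onto the diagonal $*$-subalgebra spanned by reduced monomials with $\alpha = \beta$. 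This reduces linear independence of the reduced spanning set to linear independence of diagonal elements $\ell_\alpha P \ell_\alpha^*$; using $\ell_\alpha^* \ell_\alpha = 1$ and cutting by further defect projections, this is delivered directly by T3.

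The main obstacle, I expect, is the induction reducing an arbitrary word to canonical form. The product $\ell_w^* \ell_v$ has three qualitatively distinct behaviors --- commute when $w \sim v$ (T1), annihilate when $w \neq v$ and $w \not\sim v$ (T2), equal $1$ when $w = v$ (isometry) --- and defect projections $1 - \ell_u \ell_u^*$ appear in the reduction via the splitting $1 = (1 - \ell_u \ell_u^*) + \ell_u \ell_u^*$ applied at appropriate junctures. Threading these three cases carefully through products of arbitrary length, without unintentionally collapsing a genuinely nonzero element, is the delicate combinatorial heart of the proof. Once the normal form is in hand, the rest is bookkeeping with the gauge action, in the spirit of standard gauge-invariant uniqueness theorems for Cuntz--Krieger and Toeplitz--Pimsner algebras.
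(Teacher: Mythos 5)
The paper does not prove this statement at all: it is quoted from Crisp--Laca \cite[Thm.\ 24]{CrispLaca}, and the only thing the paper verifies is the existence half, by exhibiting the concrete Fock model $\H_{\Gamma}(\C^{n})$ and checking \textbf{T1}--\textbf{T3} there. Your existence argument is the same construction in different clothing: $\ell^{2}(M_{\Gamma})$ with the Cartier--Foata normal form is canonically the graph Fock space used in the paper, and your verifications of isometry, \textbf{T1}--\textbf{T3} are fine. So for the part the paper actually does, you match it.

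The uniqueness half, however, has a genuine gap, and it sits exactly where the real content of Crisp--Laca lies. Averaging over the gauge action of $\mathbb{T}^{V}$ gives a faithful conditional expectation onto the \emph{fixed-point algebra}, but this is strictly larger than the diagonal $\overline{\mathrm{span}}\{\ell_{\alpha}P\ell_{\alpha}^{*}\}$: it contains all $\ell_{\alpha}\ell_{\beta}^{*}$ with $\alpha\neq\beta$ of equal abelianization, e.g.\ $\ell_{v}\ell_{w}\ell_{v}^{*}\ell_{w}^{*}$ for non-adjacent $v,w$, which is a nonzero, non-diagonal fixed element. So your reduction ``to linear independence of diagonal elements'' does not follow from the gauge action. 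What the standard machinery (Laca--Raeburn/Nica) actually requires is that the expectation onto the genuine diagonal be faithful on the universal algebra --- equivalently, that the quasi-lattice ordered group $(G\Gamma,M_{\Gamma})$ be amenable in Nica's sense. Faithfulness of a representation on the diagonal is indeed what \textbf{T3} buys you (that step of yours is sound), but promoting it to faithfulness on the whole algebra is precisely the amenability statement; it is proved by Crisp--Laca via controlled maps into free products and induction on $\Gamma$, and it genuinely fails for some quasi-lattice ordered groups, so it cannot be dispatched as formal bookkeeping ``in the spirit of standard gauge-invariant uniqueness theorems.'' Your normal-form claim also needs care (the natural spanning set closed under multiplication is $\{\ell_{\alpha}\ell_{\beta}^{*}\}$ via Nica covariance; the defect projections can then be expanded), but that is minor compared with the missing amenability input.
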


Now we describe how one can concretely obtain such generators. Let
$n=|\V|$, let $\{e_{v}\,:\,v\in\V\,\}$ be the standard basis for
$\mathbb{C}^{n}$ and let $\chi_{k}^{\Gamma}(\mathbb{C}^{n})$ be
the quotient of the space $(\mathbb{C}^{n})^{\otimes k}$ by the linear
subspace $J_{k}$ spanned by the elements of the form
\begin{align}
x\otimes(e_{v}\otimes e_{w}-e_{w}\otimes e_{v})\otimes y,\label{eq:ideal-generators}
\end{align}
where $x\in(\C^{n})^{\otimes k_{1}}$, $y\in(\C^{n})^{\otimes k_{2}}$,
$k_{1},k_{2}\geq0$, $v,w\in V$, $v\in N(w)$. Here, we set $(\mathbb{C}^{n})^{\otimes0}\eqdf\mathbb{C}\Omega$
where $\Omega\neq0$ is called the \emph{vacuum vector} and use isomorphisms
\[
(\C^{n})^{\otimes0}\otimes W\cong W\cong W\otimes(\C^{n})^{\otimes0}
\]
to interpret the elements in (\ref{eq:ideal-generators}) when $k_{1}$
or $k_{2}$ is zero. By declaring the set of images of pure tensors
\[
e_{v_{1}}\otimes\cdots\otimes e_{v_{k}}
\]
in $\chi_{k}^{\Gamma}(\mathbb{C}^{n})$ to be orthonormal, we define
an inner product on each $\chi_{k}^{\Gamma}(\mathbb{C}^{n})$.

Define the \emph{configuration space for $\Gamma$, }denoted $\mathcal{H}_{\Gamma}(\mathbb{C}^{n})$,
to be the Hilbert space completion of $\bigoplus_{k\geq0}\chi_{k}^{\Gamma}(\mathbb{C}^{n})$
w.r.t. the previously defined inner product. For each $v\in\V$, the
map 
\[
x\mapsto\ell_{v}(x)\eqdf e_{v}\otimes x
\]
maps each $J_{k}$ to $J_{k+1}$ and hence descends to a \emph{creation
operator}
\[
\ell_{v}:\bigoplus_{k\geq0}\chi_{k}^{\Gamma}(\mathbb{C}^{n})\to\bigoplus_{k\geq1}\chi_{k}^{\Gamma}(\mathbb{C}^{n}).
\]
Each $\ell_{v}$ is a linear isometry w.r.t. the fixed inner product
and hence extends uniquely to a linear isometry of $\mathcal{H}_{\Gamma}(\mathbb{C}^{n})$.
The adjoint operator $\ell_{v}^{*}$ is called an \emph{annihilation
operator}. The action of $\ell_{v}^{*}$ is --- up to the $J_{k}$
--- to remove $e_{v}$ from the front of a pure tensor if it is present
or can be commuted to the leftmost position in the tensor modulo the
$J_{k}$, and mapping the pure tensor to $0$ otherwise. It is straightforward
to check that the $\ell_{v}$ satisfy \textbf{T1 }and \textbf{T2,
}and\textbf{ }the vacuum vector is fixed by the left hand side of
(\ref{eq:non-degeneracy}), so \textbf{T3 }holds. 

Therefore, we may as well think of the universal Toeplitz algebra
$\O_{\Gamma}$ in this concrete form. In particular, we have a state
\[
\tau_{\vac}(t)\eqdf\langle t\Omega,\Omega\rangle
\]
on $\O_{\Gamma}$ that has a faithful GNS representation, namely,
the one we just described on $\H_{\Gamma}$.
\begin{rem}
\label{rem:CT-algebra}When $\Gamma$ is a graph with no edges on
$n$ vertices, the resulting $\O_{n}\cong\O_{\Gamma}$ is the universal
\emph{Cuntz-Toeplitz} $C^{*}$-algebra generated by $n$ isometries
with mutually orthogonal ranges, introduced in \cite[\S 3]{Cuntz}.
In this case the representation $\H_{n}\eqdf\H_{\Gamma}(\C^{n})$
is the Fock space of Boltzmann statistics.
\end{rem}

\begin{lem}
\label{lem:normal-form-A}For any $W\subset\V$, any noncommutative
monomial in the $\ell_{v}$ and $\ell_{v}^{*}$ with $v\in W$ is
either equal to zero or equal to some
\[
\ell_{v_{1}}\cdots\ell_{v_{p}}\ell_{w_{1}}^{*}\cdots\ell_{w_{q}}^{*}
\]
where all $v_{i}$ and $w_{i}$ are in $W$. We refer to such a form
for the monomial as \textbf{normal form}.
\end{lem}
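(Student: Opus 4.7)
The plan is to prove this by a straightforward rewriting/normal-form argument using the defining relations of $\O_\Gamma$ together with the isometry condition $\ell_v^*\ell_v = 1$. Given any noncommutative monomial $M$ in the letters $\ell_v,\ell_v^*$ ($v\in W$), I would induct on the pair (length of $M$, inversion count of $M$) in lexicographic order, where the \emph{inversion count} of $M$ is the number of pairs $(i,j)$ with $i<j$ such that the $i$-th letter is a star-letter $\ell_{w}^*$ and the $j$-th letter is a non-star letter $\ell_{v}$. A monomial with inversion count zero is already of the required form $\ell_{v_1}\cdots\ell_{v_p}\ell_{w_1}^*\cdots\ell_{w_q}^*$, since all creation letters precede all annihilation letters; the base case (empty monomial equals $1$, i.e.\ $p=q=0$) is trivial.

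For the inductive step, pick any adjacent pair $\ell_w^*\ell_v$ occurring in $M$ (such a pair must exist when the inversion count is positive, for instance by choosing an inversion with $j=i+1$, which can always be arranged by considering the rightmost letter involved in some inversion). I would then analyse three cases using the relations from Theorem~\ref{thm:universailty} and the isometry condition:
\begin{itemize}
\item If $w=v$, replace $\ell_v^*\ell_v$ by $1$; this yields a monomial of strictly smaller length.
\item If $w\in N(v)$ with $w\neq v$, apply \textbf{T1} to replace $\ell_w^*\ell_v$ by $\ell_v\ell_w^*$; this leaves the length unchanged but decreases the inversion count by exactly one (the swapped pair disappears and no new inversion is created, since the two swapped letters occupy the same two positions with their roles reversed).
\item If $w\notin N(v)\cup\{v\}$, apply \textbf{T2} to conclude $\ell_w^*\ell_v=0$, whence $M=0$ and the conclusion holds vacuously.
\end{itemize}
In each case the inductive hypothesis applies to the resulting monomial, giving the claim. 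Note that throughout this process every letter remains in $W$, since the only letters that ever appear are those in the original $M$ (plus possibly the scalar $1$, which contributes no letters), so the final normal form automatically satisfies $v_i,w_j\in W$.

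I do not anticipate any serious obstacle here; the lemma is a purely algebraic consequence of the relations, amounting to the observation that \textbf{T1}, \textbf{T2}, and the isometry relation provide a confluent rewriting system that pushes all creation operators to the left of all annihilation operators. The only point requiring a small amount of care is the choice of potential function to ensure termination — the lexicographic (length, inversion count) ordering is one clean way to organize the induction and avoid getting tangled up in the fact that deletions via $\ell_v^*\ell_v=1$ shorten the word while \textbf{T1}-swaps merely rearrange it.
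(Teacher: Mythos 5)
Your proof is correct and follows essentially the same route as the paper: locate an occurrence of $\ell_w^*\ell_v$ and eliminate it via the isometry relation, \textbf{T1}, or \textbf{T2}, iterating until normal form (or zero) is reached. The only difference is that you make the termination of this rewriting explicit via the lexicographic (length, inversion count) potential, which the paper leaves implicit in ``iterating this gives the result.''
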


\begin{rem}
This normal form is unique up to permutations of the form $\ell_{v_{k}}\ell_{v_{k+1}}=\ell_{v_{k+1}}\ell_{v_{k}}$
and $\ell_{v_{k}}^{*}\ell_{v_{k+1}}^{*}=\ell_{v_{k+1}}^{*}\ell_{v_{k}}^{*}$
whenever $v_{k}\in N(v_{k+1})$. However, we do not need this fact
in the sequel.
\end{rem}

\begin{proof}[Proof of Lemma \ref{lem:normal-form-A}]
Either the monomial is already in normal form or there is an occurrence
of $\ell_{v}^{*}\ell_{w}$ for some vertices $v$ and $w$. By \textbf{T2},
this occurrence either makes the monomial zero, or it can be reduced
to the identity, or the two elements can be commuted. In any case,
the occurrence can be removed. Iterating this gives the result.
\end{proof}
It is worth noting that 
\[
\tau_{\vac}\left(\ell_{v_{1}}\cdots\ell_{v_{p}}\ell_{w_{1}}^{*}\cdots\ell_{w_{q}}^{*}\right)=0
\]
if and only if $p+q>0$, and this is another characterization of $\tau_{\vac}$.

\subsection{Right-angled probability}

The development of our framework relies on the following definitions. 
\begin{defn}
We say that a sequence $(v_{j})_{j=1}^{m}\subseteq\V$ is $\Gamma$\emph{-reduced}
if whenever $v_{j}=v_{k}$ for $j<k$, then there exists $j<l<k$
such that $v_{l}\notin N(v_{j})=N(v_{k})$.
\end{defn}

\begin{defn}
\label{def:right-angled}Let $\mathcal{A}$ be a unital $C^{*}$-algebra
and $\tau$ a state on $\mathcal{A}$. Given a finite simple graph
$\Gamma$ on the vertices $\{1,\ldots,n\}$, we say that a collection
of unital $*$-subalgebras $\{\mathcal{A}_{v}\}_{v\in\V}$ of $\mathcal{A}$
is \emph{$\Gamma$-right-angled w.r.t. $\tau$ }if 
\begin{enumerate}
\item $\mathcal{A}_{v}$ commutes with $\mathcal{A}_{w}$ whenever $w\in N(v)$,
\item \label{enu:For-any-}For any $m\in\mathbb{N}$ if a sequence $(v_{j})_{j=1}^{m}\subseteq\V$
is $\Gamma$-reduced then for any $a_{j}\in\mathcal{A}_{v_{j}}$ with
$\tau(a_{j})=0$, one has $\tau(a_{1}\cdots a_{n})=0.$
\end{enumerate}
\end{defn}

\begin{rem*}
The case of graphs with no edges in Definition \ref{def:right-angled}
recovers the definition of freeness. Definition \ref{def:right-angled}
is a rewording of the definition of $\epsilon$-independence given
in \cite[Defn. 3.2]{SW}, see M{\l}otkowski \cite[Defn. 3]{Mlot}
for the origin of this definition. 
\end{rem*}
\begin{example}
\label{exa:The--subalgebras-of}\cite[Prop. 4.2]{SW} The $*$-subalgebras
of $C_{\red}^{*}(G\Gamma)$ generated by the individual elements of
$\V$ are $\Gamma$-right-angled w.r.t. the canonical tracial state. 
\end{example}

\begin{lem}
\label{lem:The--subalgebras-right-angled}The $C^{*}$-subalgebras
$\A_{v}\eqdf\overline{\langle\ell_{v},\ell_{v}^{*}\rangle}$ of $\O_{\Gamma}$
are $\Gamma$-right-angled w.r.t. the state $\tau_{\vac}$.
\end{lem}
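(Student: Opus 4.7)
The commutation condition (item (1) of Definition \ref{def:right-angled}) is immediate from \textbf{T1}: for $w\in N(v)$, both $\ell_{w}$ and $\ell_{w}^{*}$ commute with $\ell_{v}$ and $\ell_{v}^{*}$, so the $*$-algebras generated by these elements commute, and hence so do their norm closures $\A_{w}$ and $\A_{v}$. The substance of the lemma is item (2), so fix a $\Gamma$-reduced sequence $(v_{j})_{j=1}^{m}$ and elements $a_{j}\in\A_{v_{j}}$ with $\tau_{\vac}(a_{j})=0$; the goal is to verify $\tau_{\vac}(a_{1}\cdots a_{m})=0$.

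The plan is to reduce the problem to a Fock-space calculation. By continuity of multiplication and of $\tau_{\vac}$, together with density of noncommutative $*$-polynomials in $\ell_{v_{j}}$ inside $\A_{v_{j}}$, it suffices to treat the case where each $a_{j}$ is such a polynomial. Applying Lemma \ref{lem:normal-form-A} with $W=\{v_{j}\}$ (using the isometry relation $\ell_{v_{j}}^{*}\ell_{v_{j}}=1$) writes $a_{j}$ as a linear combination of monomials $\ell_{v_{j}}^{p}(\ell_{v_{j}}^{*})^{q}$ with $p,q\geq 0$. Since $\tau_{\vac}(\ell_{v_{j}}^{p}(\ell_{v_{j}}^{*})^{q})=\delta_{p,0}\delta_{q,0}$, the mean-zero hypothesis annihilates the $(p,q)=(0,0)$ coefficient, and it then suffices by multilinearity to show $\langle T\Omega,\Omega\rangle=0$ for every product $T=\prod_{j=1}^{m}\ell_{v_{j}}^{p_{j}}(\ell_{v_{j}}^{*})^{q_{j}}$ in which each $(p_{j},q_{j})\neq(0,0)$.

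I will compute $T\Omega$ by applying factors right-to-left: set $\xi_{m+1}=\Omega$ and $\xi_{j}=\ell_{v_{j}}^{p_{j}}(\ell_{v_{j}}^{*})^{q_{j}}\xi_{j+1}$. By downward induction on $j$ I aim to prove that either some $\xi_{k}$ vanishes (so $T\Omega=0$ and we are done), or else $q_{j}=0$, $p_{j}>0$, and $\xi_{j}$ is the image in $\chi_{P_{j}}^{\Gamma}(\C^{n})$ of the pure tensor $e_{v_{j}}^{\otimes p_{j}}\otimes\cdots\otimes e_{v_{m}}^{\otimes p_{m}}$, where $P_{j}=p_{j}+\cdots+p_{m}$. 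The base case $j=m$ is immediate since $(\ell_{v_{m}}^{*})^{q_{m}}\Omega$ vanishes whenever $q_{m}>0$. Once the induction concludes, $T\Omega=\xi_{1}$ lies in $\chi_{P_{1}}^{\Gamma}(\C^{n})$ with $P_{1}\geq 1$, hence is orthogonal to $\Omega$, giving $\tau_{\vac}(T)=0$.

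The key obstacle is the inductive step in the case $q_{j}>0$: I must rule this out by showing $\ell_{v_{j}}^{*}\xi_{j+1}=0$. Reading off the action of $\ell_{v_{j}}^{*}$ on equivalence classes of pure tensors, a nonzero image would require some letter equal to $v_{j}$ in the trace-monoid word $v_{j+1}^{p_{j+1}}\cdots v_{m}^{p_{m}}$ representing $\xi_{j+1}$ that can be commuted to the leftmost position, which forces all preceding letters to lie in $N(v_{j})$. Any such letter necessarily sits in a block $v_{k}^{p_{k}}$ with $k>j$ and $v_{k}=v_{j}$; then $\Gamma$-reducedness applied to the pair $j<k$ supplies an intermediate index $l$ with $v_{l}\notin N(v_{j})$, and the block $v_{l}^{p_{l}}$ obstructs the required commutation. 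This contradiction forces $q_{j}=0$, whence $p_{j}>0$ by the hypothesis $(p_{j},q_{j})\neq(0,0)$, and applying $\ell_{v_{j}}^{p_{j}}$ produces the $\xi_{j}$ demanded by the inductive hypothesis, completing the argument.
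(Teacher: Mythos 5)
Your argument is correct, and it is structurally parallel to the paper's proof: the same reduction via Lemma \ref{lem:normal-form-A} to products of normal-form monomials $\ell_{v_j}^{p_j}\ell_{v_j}^{*q_j}$ with $(p_j,q_j)\neq(0,0)$ (the mean-zero hypothesis killing the constant coefficient), and the same essential use of $\Gamma$-reducedness located at the rightmost annihilator. Where you differ is in how the vanishing is verified. The paper stays at the level of the relations: taking the maximal $k$ with an annihilator present, it either commutes $\ell_{v_k}^{*}$ through neighboring creation operators until it hits $\Omega$, or meets the minimal non-neighbor $\ell_{v_l}$ (with $v_l\neq v_k$ forced by reducedness) and invokes \textbf{T2}; this uses only \textbf{T1}, \textbf{T2} and $\ell_v^{*}\Omega=0$, so it is independent of the concrete realization. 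You instead compute the vector $T\Omega$ inside the configuration space $\H_{\Gamma}$ by a right-to-left induction, invoking the trace-monoid description of the action of $\ell_v^{*}$ on classes of pure tensors (which the paper states when constructing $\H_{\Gamma}$): reducedness supplies, before any occurrence of $v_j$ among the later blocks, a non-empty block of a non-neighbor that obstructs commuting it to the front, so the annihilator kills the vector, and in the all-creation case $T\Omega$ is a single basis vector orthogonal to $\Omega$. Your route gives a more explicit picture (you identify $T\Omega$ exactly) at the price of leaning on the spatial model and its combinatorics, whereas the paper's manipulation is purely relation-based. Two small points of hygiene: the phrase ``this contradiction forces $q_j=0$'' should be read as ``in the branch where no $\xi_k$ vanishes one must have $q_j=0$'' (your stated dichotomy already says this), and when approximating $a_j$ by polynomials you should recenter them, replacing $b_j$ by $b_j-\tau_{\vac}(b_j)1$, so that the mean-zero hypothesis is preserved --- the paper elides the same step.
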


\begin{proof}
It is clear that if $v\in N(w)$ then $\mathcal{A}_{v}$ and $\mathcal{A}_{w}$
commute with one another by \textbf{T1}. 

Suppose that a sequence $(v_{j})_{j=1}^{m}\subseteq\V$ is $\Gamma$-reduced,
and $a_{j}\in\mathcal{A}_{v_{j}}$ with $\tau_{\vac}(a_{j})=0$. We
want to prove $\tau_{\vac}(a_{1}\cdots a_{n})=0.$ Since for each
$v\in\V$, normal form monomials involving only $\ell_{v}$ and $\ell_{v}^{*}$
span a dense subspace of $\A_{v}$ by Lemma \ref{lem:normal-form-A},
by continuity and linearity of $\tau_{\vac}$ we can reduce to checking
the case where $a_{j}$ are normal form monomials in the generators
of the respective subalgebras $\mathcal{A}_{v_{j}}$, i.e., $a_{j}=\ell_{v_{j}}^{c_{j}}\ell_{v_{j}}^{*d_{j}}$
for some $c_{j}+d_{j}>0$. We then are required to show that
\begin{equation}
\tau_{\vac}(\ell_{v_{1}}^{c_{1}}\ell_{v_{1}}^{*d_{1}}\cdots\ell_{v_{m}}^{c_{m}}\ell_{v_{m}}^{*d_{m}})=0.\label{eq:supposed-to-be-zero}
\end{equation}

First suppose some $d_{k}>0$ and let $k$ be the maximal element
of $\{1,\ldots,m\}$ for which this is true. If there is no $k<l\leq m$
such that $v_{l}\notin N(v_{k})$ then $\ell_{v_{k}}^{*d_{k}}\cdots\ell_{v_{m}}^{c_{m}}\Omega=\cdots\ell_{v_{m}}^{c_{m}}\ell_{v_{k}}^{*d_{k}}\Omega=0$.
Otherwise pick the minimum $l>k$ with $v_{l}\notin N(v_{k})$. Since
the sequence is $\Gamma$-reduced, $v_{l}\neq v_{k}$ (or else there
would be a smaller $l$). Then $c_{l}>0$ (as $d_{\ell}=0$) and $\ell_{v_{k}}^{*d_{k}}\cdots\ell_{v_{\ell}}^{c_{\ell}}=\cdots\ell_{v_{k}}^{*d_{k}}\ell_{v_{l}}^{c_{l}}=0$
by \textbf{T2}. Hence if any $d_{k}>0$ then (\ref{eq:supposed-to-be-zero})
holds. Otherwise, all $d_{k}=0$. But then all $c_{k}>0$ and
\[
\tau_{\vac}(\ell_{v_{1}}^{c_{1}}\ell_{v_{1}}^{*d_{1}}\cdots\ell_{v_{m}}^{c_{m}}\ell_{v_{m}}^{*d_{m}})=\langle\ell_{v_{1}}^{c_{1}}\cdots\ell_{v_{m}}^{c_{m}}\Omega,\Omega\rangle=\langle\ell_{v_{m}}^{*c_{m}}\cdots\ell_{v_{1}}^{*c_{1}}\Omega,\Omega\rangle=0.
\]
\end{proof}
Like in the case of free subalgebras, the restrictions of a state
to $\Gamma$-right-angled subalgebras that generate the entire $C^{*}$-algebra
are enough to determine the state on the entire $C^{*}$-algebra.
The proof of this fact is similar to \cite[Proposition 1.3]{Vo1995},
and an outline of the proof is given in the paragraphs proceeding
Definition 3.2 in \cite{SW}, we include the details for completeness
here.
\begin{prop}
\label{prop:right-angle-determine-state}Suppose that $\mathcal{A}$
is a unital $C^{*}$-algebra, $\tau$ a state on $\mathcal{A}$, $\Gamma$
a finite simple graph on the vertex set $\{1,\ldots,n\}$ and $\{\mathcal{A}_{v}\}_{v\in V}$
a $\Gamma$-right-angled collection of $*$-subalgebras w.r.t. $\tau$
such that $\mathcal{A}$ is generated as a $C^{*}$-algebra by $\bigcup_{v\in V}\mathcal{A}_{v}$.
Then the state $\tau$ is determined by its restrictions $\tau|_{\mathcal{A}_{v}}$.
\end{prop}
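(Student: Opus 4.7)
The plan is to prove by induction on $m$ that for every sequence $v_1,\dots,v_m\in V$ and every choice of $a_j\in\mathcal{A}_{v_j}$, the value $\tau(a_1\cdots a_m)$ is determined by the restrictions $\tau|_{\mathcal{A}_v}$. Since each $\mathcal{A}_v$ is a unital $*$-subalgebra, such ordered products span the unital $*$-subalgebra generated by $\bigcup_{v\in V}\mathcal{A}_v$, which is dense in $\mathcal{A}$; together with continuity of $\tau$ this yields the proposition. The base case $m=0$ is $\tau(1)=1$.

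First I would handle sequences $(v_j)_{j=1}^m$ that are not $\Gamma$-reduced. Then there exist $j<k$ with $v_j=v_k$ and $v_l\in N(v_j)=N(v_k)$ for all $j<l<k$. By axiom (1) of Definition \ref{def:right-angled}, $a_k$ commutes with each of $a_{j+1},\dots,a_{k-1}$, so
\[
a_1\cdots a_m \;=\; a_1\cdots a_{j-1}(a_j a_k)a_{j+1}\cdots a_{k-1}a_{k+1}\cdots a_m.
\]
Since $a_j a_k\in\mathcal{A}_{v_j}$, the right-hand side is a product of length $m-1$ whose factors still lie in the prescribed subalgebras, and the induction hypothesis determines its $\tau$-value.

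Second, assume $(v_j)_{j=1}^m$ is $\Gamma$-reduced. Center each factor: set $a_j^{\circ}\eqdf a_j-\tau(a_j)\cdot 1 \in \mathcal{A}_{v_j}$, so $\tau(a_j^{\circ})=0$. Expanding $a_1\cdots a_m=\prod_{j=1}^{m}\bigl(a_j^{\circ}+\tau(a_j)\cdot 1\bigr)$ and applying linearity of $\tau$ gives
\[
\tau(a_1\cdots a_m) \;=\; \sum_{S\subseteq\{1,\dots,m\}}\Bigl(\prod_{j\notin S}\tau(a_j)\Bigr)\,\tau\bigl(a_{j_1}^{\circ}\cdots a_{j_{|S|}}^{\circ}\bigr),
\]
where $S=\{j_1<\cdots<j_{|S|}\}$ (and the empty product is $1$). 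The term $S=\{1,\dots,m\}$ vanishes by axiom (2) of Definition \ref{def:right-angled}, using that $(v_j)_{j=1}^m$ is $\Gamma$-reduced and each $a_j^{\circ}$ is centered. Every other term factors as a known scalar (a product of restricted values $\tau(a_j)$) times $\tau$ of an ordered product of strictly fewer than $m$ elements of the subalgebras, which is determined by the induction hypothesis.

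The one delicate point is the first step: one must verify that the slide-and-merge actually produces a product of length $m-1$ with each factor again in one of the $\mathcal{A}_v$. Both assertions are immediate, the former because $v_j=v_k$ eliminates one position, the latter because each $\mathcal{A}_v$ is a subalgebra so $a_j a_k\in\mathcal{A}_{v_j}$. With these two reductions in hand, the induction closes and $\tau$ is pinned down by the restrictions $\tau|_{\mathcal{A}_v}$.
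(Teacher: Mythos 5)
Your argument is correct. The paper proves the proposition from the same two ingredients --- merging two factors attached to the same vertex after commuting the intermediate factors past it (condition (1)), and using condition (2) to annihilate a fully centered product over a $\Gamma$-reduced sequence --- but it organizes the induction differently: it inducts on the smallest index $k$ such that the factors beyond position $k$ are centered and indexed by a $\Gamma$-reduced sequence, centers one factor at a time, and in its second case it must check by hand that the sequence $(v_{k+1},\ldots,v_{l-1},v_{l+1},\ldots,v_{m})$ obtained after the merge is again $\Gamma$-reduced, which is the fiddliest part of the published proof. Your induction on the total length $m$, with the dichotomy ``not reduced: commute, merge, and shorten'' versus ``reduced: center all factors simultaneously and expand multilinearly,'' sidesteps that verification entirely: after a merge you only invoke the induction hypothesis for arbitrary (not necessarily reduced) shorter products, and in the reduced case the only term of full length is the completely centered one, which vanishes by condition (2) applied to the original sequence. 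The price is the $2^{m}$-term expansion, which is harmless since each coefficient $\prod_{j\notin S}\tau(a_{j})$ is a product of restricted values and each $a_{j}-\tau(a_{j})1$ lies in $\mathcal{A}_{v_{j}}$ because the subalgebras are unital (a point you correctly use). Both proofs conclude with the same density-plus-continuity step, so your write-up is a complete and somewhat cleaner alternative to the argument in the paper.
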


\begin{proof}
By hypothesis, any $a\in\mathcal{A}$ can be written as the limit
of finite linear combinations of finite products of elements from
the $*$-subalgebras $\mathcal{A}_{v}$. By continuity and linearity
of $\tau$ it thus suffices to show that $\tau(a_{1}\cdots a_{m})$
can be determined by the restrictions for any selection of $a_{j}\in\mathcal{A}_{v_{j}}$
for $1\leq j\leq m$. We proceed by induction on the smallest non-negative
integer $k$ such that $\tau(a_{j})=\tau|_{\mathcal{A}_{v_{j}}}(a_{j})=0$
for every $j>k$ and such that $(v_{j})_{j=k+1}^{m}$ is a $\Gamma$-reduced
sequence.

The base case of $k=0$ follows immediately by definition of the subalgebras
being $\Gamma$-right-angled w.r.t. $\tau$ since then $\tau(a_{j})=0$
for all $j=1,\ldots,m$ and $(v_{j})_{j=1}^{m}$ is $\Gamma$-reduced
resulting in $\tau(a_{1}\cdots a_{m})=0$.

For the inductive step, assume that the result holds up to some $k$
so that for $k+1$ we have two possibilities.

\textbf{Case 1. }$(v_{j})_{j=k+1}^{m}$ is $\Gamma$-reduced. Then
we may write
\begin{align*}
\tau(a_{1}\cdots a_{m}) & =\tau(a_{1}\cdots a_{k}\tau|_{\mathcal{A}_{v_{k+1}}}(a_{k+1})a_{k+2}\cdots a_{m})\\
 & +\tau(a_{1}\cdots a_{k}(a_{k+1}-\tau|_{\mathcal{A}_{v_{k+1}}}(a_{k+1}))a_{k+2}\cdots a_{m}).
\end{align*}
By linearity of $\tau$, the first term is equal to $\tau|_{\mathcal{A}_{v_{k+1}}}(a_{k+1})\tau(a_{1}\cdots a_{k}a_{k+2}\cdots a_{m})$
and by assumption, $\tau(a_{k+2})=\ldots=\tau(a_{m})=0$ and the sequence
$(v_{j})_{j=k+2}^{m}$ is $\Gamma$-reduced. Thus this first term
is by the inductive hypothesis completely determined by the restricted
states. For the second term, we have $\tau(a_{k+1}-\tau|_{\mathcal{A}_{v_{k+1}}}(a_{k+1}))=\tau(a_{k+2})=\ldots=\tau(a_{m})=0$
and the sequence $(v_{j})_{j=k+1}^{m}$ is $\Gamma$-reduced so that
by the inductive hypothesis the second term is also completely determined
by the restricted states and hence $\tau(a_{1}\cdots a_{m})$ is also.

\textbf{Case 2. }$(v_{j})_{j=k+1}^{m}$ is not $\Gamma$-reduced but
by the inductive hypothesis, $(v_{j})_{j=k+2}^{m}$ is $\Gamma$-reduced.
Then, it must be the case that there exists some $k+1<l\leq m$ for
which $v_{l}=v_{k+1}$ and $v_{c}\in N(v_{k+1})=N(v_{l})$ for every
$k+1<c<l$. But by definition of $\Gamma$-right-angled, $a_{k+2},\ldots,a_{l-1}$
all commute with $a_{l}$ and so 
\[
a_{1}\cdots a_{m}=a_{1}\cdots a_{k}a_{k+1}a_{l}a_{k+2}\cdots a_{l-1}a_{l+1}\cdots a_{m}.
\]
We can then decompose the state of $a_{1}\cdots a_{m}$ in the following
manner
\begin{align*}
\tau(a_{1}\cdots a_{m}) & =\tau(a_{1}\cdots a_{k}(a_{k+1}a_{l}-\tau|_{\mathcal{A}_{v_{k+1}}}(a_{k+1}a_{l}))a_{k+2}\cdots a_{l-1}a_{l+1}\cdots a_{m})\\
 & +\tau|_{\mathcal{A}_{v_{k+1}}}(a_{k+1}a_{l})\tau(a_{1}\cdots a_{k}a_{k+2}\cdots a_{l-1}a_{l+1}\cdots a_{m}).
\end{align*}
Then, the sequence $(v_{k+1},\ldots,v_{l-1},v_{l+1},\ldots,v_{m})$
is $\Gamma$-reduced. To prove this, suppose first that there exists
some $b\in\{k+2,\ldots,l-1,l+1,\ldots,m\}$ such that $v_{k+1}=v_{b}$
and $v_{c}\in N(v_{k+1})=N(v_{b})$ for all $k+1<c<b$, $c\neq l$.
Then $b>l$ since by construction, if \textbf{$k+1<b<l$} , then $v_{b}=v_{k+1}=v_{\ell}$
and $v_{b}\in N(v_{l})$ contradicting $\Gamma$ being simple. But
when $b>l$, since $v_{k+1}=v_{l}$, we obtain $v_{l}=v{}_{b}$ and
this gives a contradiction to $\{v_{j}\}_{j=k+2}^{m}$ being $\Gamma$-reduced
as for each $l+1\leq c<b$ we have $v_{c}\in N(v_{b})$. The only
other way that $(v_{k+1},\ldots,v_{l-1},v_{l+1},\ldots,v_{m})$ could
not be $\Gamma$-reduced without contradicting the fact that $\{v_{j}\}_{j=k+2}^{m}$
is $\Gamma$-reduced, is if there exists $a<l<b$ such that $v_{a}=v_{b}$
and $v_{l}$ is the only index for which $v_{l}\notin N(v_{a})=N(v_{b})$,
but this is not true since by construction, $v_{l}\in N(v_{a})$.
Thus, the sequence is indeed $\Gamma$-reduced. \\
We can then conclude that $\tau(a_{1}\cdots a_{m})$ is determined
by the restricted states because for the first term we have, $a_{k+1}a_{l}-\tau|_{\mathcal{A}_{v_{k+1}}}(a_{k+1}a_{l})\in\mathcal{A}_{v_{k+1}}$,
$\tau(a_{k+1}a_{l}-\tau|_{\mathcal{A}_{v_{k+1}}}(a_{k+1}a_{l}))=0=\tau(a_{k+2})=\ldots=\tau(a_{l-1})=\tau(a_{l+1})=\ldots=\tau(a_{m})$
and $(v_{k+1},\ldots,v_{l-1},v_{l+1},\ldots,a_{m})$ is $\Gamma$-reduced
which means by the inductive hypothesis, 
\[
\tau(a_{1}\cdots a_{k}(a_{k+1}a_{l}-\tau|_{\mathcal{A}_{v_{k+1}}}(a_{k+1}a_{l}))a_{k+2}\cdots a_{l-1}a_{l+1}\cdots a_{m})
\]
 is determined by the restricted states and similarly for the second
term. 
\end{proof}

\subsection{A subalgebra generated by semicircular variables\label{subsec:A-subalgebra-generated}}

Let
\[
s_{v}\eqdf\ell_{v}+\ell_{v}^{*}\in\O_{\Gamma}.
\]
We call these \emph{semicircular variables} in light of Lemma \ref{lem:semicircular-right-angled-system}
below. Let $\SS_{\Gamma}$ denote the unital $C^{*}$-algebra generated
by the $s_{v}$ (and $1$) in $\O_{\Gamma}$. 
\begin{lem}
\label{lem:semicircular-right-angled-system}W.r.t. the state $\tau_{\vac}$
restricted to $\SS_{\Gamma}$, each $s_{v}$ is distributed according
to the semicircle law, that is, for each even $k\in\N$
\begin{equation}
\tau_{\vac}(s_{v}^{k})=\frac{1}{2\pi}\int_{-2}^{2}t^{k}\sqrt{4-t^{2}}\mathrm{d}t\label{eq:semicircle-law}
\end{equation}
 and for odd $k$, the moment is zero.
\end{lem}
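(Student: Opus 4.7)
The plan is to restrict attention to the closed $\ell_v,\ell_v^*$-invariant subspace of $\H_\Gamma(\C^n)$ generated from $\Omega$, recognize this as the classical one-generator Fock space, and then compute the moments by counting Dyck paths.

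First, I would show that the subspace $\hh_v \eqdf \overline{\mathrm{span}}\{e_v^{\otimes k} : k\geq 0\}$ (where $e_v^{\otimes 0}\eqdf\Omega$) injects isometrically into $\H_\Gamma(\C^n)$. The point is that every generator of the ideal $J_k$ in (\ref{eq:ideal-generators}) has the form $x\otimes(e_v\otimes e_w - e_w\otimes e_v)\otimes y$ with $v \in N(w)$; since $\Gamma$ is simple we have $v\notin N(v)$, so no element of $J_k$ lies in $(\C e_v)^{\otimes k}$. Thus the pure tensors $e_v^{\otimes k}$ remain orthonormal in the quotient and $\hh_v$ is isometric to the full Boltzmann Fock space on one generator in the sense of Remark \ref{rem:CT-algebra}. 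Moreover, $\ell_v(e_v^{\otimes k})=e_v^{\otimes(k+1)}$ and $\ell_v^*(e_v^{\otimes k})=e_v^{\otimes(k-1)}$ for $k\geq1$ with $\ell_v^*\Omega=0$, so $\hh_v$ is $s_v$-invariant and $\tau_{\vac}(s_v^k) = \langle s_v^k \Omega, \Omega\rangle_{\hh_v}$.

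Next, I would expand $s_v^k = (\ell_v+\ell_v^*)^k$ as a sum over the $2^k$ sign sequences $\varepsilon\in\{+,-\}^k$, and observe that, using $\ell_v^*\ell_v = 1$ and $\ell_v^*\Omega = 0$, the matrix coefficient $\langle \ell_v^{\varepsilon_1}\cdots\ell_v^{\varepsilon_k}\Omega,\Omega\rangle$ equals $1$ if the sequence read right-to-left forms a Dyck path (every suffix sum is nonnegative and the total sum is zero) and $0$ otherwise. Hence $\tau_{\vac}(s_v^k)$ counts Dyck paths of length $k$: this vanishes for odd $k$ and equals the Catalan number $C_{k/2}$ for even $k$. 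Since
\[
\frac{1}{2\pi}\int_{-2}^{2} t^{2n}\sqrt{4-t^2}\,\mathrm{d}t = C_n,
\]
this matches (\ref{eq:semicircle-law}) and completes the proof.

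There is no real obstacle here: the only thing to verify carefully is that the $\Gamma$-relations do not interfere on $\hh_v$, which follows immediately from $v\notin N(v)$ in any simple graph. Everything else is the standard Fock-space computation of free semicircularity.
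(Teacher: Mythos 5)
Your proposal is correct and follows essentially the same route as the paper: the paper's (very terse) proof likewise evaluates $\langle(\ell_v+\ell_v^*)^{2p}\Omega,\Omega\rangle$ by counting Dyck words to get the Catalan number $C_p$ and then invokes $C_p=\frac{1}{2\pi}\int_{-2}^{2}t^{2p}\sqrt{4-t^2}\,\mathrm{d}t$. Your additional verification that the vectors $e_v^{\otimes k}$ survive the quotient by $J_k$ (using that $\Gamma$ is simple, so $v\notin N(v)$) is a harmless elaboration — in fact the computation already goes through using only $\ell_v^*\ell_v=1$ and $\ell_v^*\Omega=0$ — but it is accurate and does no harm.
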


\begin{proof}
It follows by evaluating $\langle\left(\ell_{v}+\ell_{v}^{*}\right)^{2p}\Omega,\Omega\rangle$
as the Catalan number $C_{p}$ via Dyck words and then using the formula
$C_{p}=\frac{1}{2\pi}\int_{-2}^{2}t^{2p}\sqrt{4-t^{2}}\mathrm{d}t$
. (This is well-known in the free case, which uses the same argument.) 
\end{proof}
\begin{lem}
\label{lem:The-GNS-representation}The GNS representation of $\SS_{\Gamma}$
w.r.t. the restriction of the state $\tau_{\vac}$ is faithful.
\end{lem}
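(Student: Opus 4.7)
The plan is to exploit the faithfulness of the GNS representation of the ambient algebra $\O_\Gamma$ on $\H_\Gamma(\C^n)$, already established in the paragraph preceding Remark~\ref{rem:CT-algebra}, by identifying the GNS representation of $\SS_\Gamma$ with respect to $\tau_{\vac}|_{\SS_\Gamma}$ with the restriction to $\SS_\Gamma$ of that faithful representation. The only substantive step is to verify that the vacuum vector $\Omega$, which is cyclic for $\O_\Gamma$, remains cyclic for the smaller subalgebra $\SS_\Gamma$.

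To establish the cyclicity $\overline{\SS_\Gamma \Omega} = \H_\Gamma(\C^n)$, I would show by induction on $k \geq 0$ that every class $[e_{v_1} \otimes \cdots \otimes e_{v_k}] \in \chi_k^\Gamma(\C^n)$ lies in $\SS_\Gamma \Omega$; since such classes span a dense subspace of $\H_\Gamma(\C^n)$, this suffices. The cases $k=0$ and $k=1$ are immediate from $\Omega \in \SS_\Gamma \Omega$ and $s_v \Omega = \ell_v \Omega + \ell_v^* \Omega = e_v$. For the inductive step, one writes
\[
[e_{v_1} \otimes \cdots \otimes e_{v_{k+1}}] = \ell_{v_1}[e_{v_2} \otimes \cdots \otimes e_{v_{k+1}}] = s_{v_1} \cdot [e_{v_2} \otimes \cdots \otimes e_{v_{k+1}}] - \ell_{v_1}^*[e_{v_2} \otimes \cdots \otimes e_{v_{k+1}}].
\]
The first term on the right is $s_{v_1}$ applied to an element of $\SS_\Gamma \Omega$ by the inductive hypothesis at degree $k$, hence lies in $\SS_\Gamma \Omega$. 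The second term lies in $\chi_{k-1}^\Gamma(\C^n)$, which is contained in $\SS_\Gamma \Omega$ by the inductive hypothesis at degree $k-1$. This closes the induction.

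Given the density, the linear map $\SS_\Gamma \to \H_\Gamma(\C^n)$ sending $a \mapsto a\Omega$ satisfies $\|a\Omega\|^2 = \langle a^*a\, \Omega,\Omega\rangle = \tau_{\vac}(a^*a)$, so it descends to an isometry from the GNS Hilbert space $L^2(\SS_\Gamma, \tau_{\vac}|_{\SS_\Gamma})$ into $\H_\Gamma(\C^n)$; by the cyclicity established above, this isometry is surjective. A direct check against elements $b\Omega$ shows it intertwines the GNS representation of $\SS_\Gamma$ with the restriction of the $\O_\Gamma$-action on $\H_\Gamma(\C^n)$ to the $\ast$-subalgebra $\SS_\Gamma$. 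Since the latter action is the restriction of a faithful $\ast$-representation of $\O_\Gamma$ to a $C^*$-subalgebra, it is itself injective, and therefore the GNS representation of $\SS_\Gamma$ is faithful.

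The main (indeed only) obstacle is the cyclicity induction, and it presents no real difficulty once one observes that $s_v$ differs from the creation operator $\ell_v$ by the lower-degree piece $\ell_v^*$. Note that no tracial property of $\tau_{\vac}|_{\SS_\Gamma}$ is needed for this argument.
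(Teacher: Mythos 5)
Your proposal is correct and follows essentially the same route as the paper: reduce faithfulness to showing that $\Omega$ is cyclic for $\SS_{\Gamma}$ acting on $\H_{\Gamma}(\C^{n})$, and prove cyclicity by induction on tensor degree using the fact that $s_{v}$ differs from $\ell_{v}$ by a degree-lowering term. The only cosmetic difference is that the paper expands $s_{v_{1}}\cdots s_{v_{k}}\Omega$ all at once while you peel off one generator at a time; the underlying argument is the same.
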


\begin{proof}
It suffices to prove that $\SS_{\Gamma}.\Omega$ is dense in $\H_{\Gamma}$.
The set of all
\[
e_{v_{1}}\otimes\cdots\otimes e_{v_{k}}
\]
where $k\in\N\cup\{0\}$ span a dense subspace of $\H_{\Gamma}$,
so it suffices to prove each of these is in $\SS_{\Gamma}.\Omega$.
We prove this by induction on $k$. When $k=0$, the statement is
that the vacuum vector is in $\SS_{\Gamma}.\Omega$, this holds since
$\SS_{\Gamma}$ is unital.

So assume $k>0$ and that for $0\leq K\leq k-1$ we have $I_{K}\subset\SS_{\Gamma}.\Omega$
where $I_{K}$ denotes the subspace spanned by all
\[
e_{w_{1}}\otimes\cdots\otimes e_{w_{K}}.
\]
Let $\{v_{i}\}_{i=1}^{k}$ be a sequence of vertices. Then
\begin{align*}
s_{v_{1}}s_{v_{2}}\cdots s_{v_{k}}\Omega & =\text{\ensuremath{\left(\ell_{v_{1}}+\ell_{v_{1}}^{*}\right)}\ensuremath{\ensuremath{\cdots\left(\ell_{v_{k}}+\ell_{v_{k}}^{*}\right)\Omega}}}\\
 & \in\ell_{v_{1}}\ell_{v_{2}}\cdots\ell_{v_{k}}\Omega+\sum_{i=0}^{k-1}I_{i}=e_{v_{1}}\otimes\cdots\otimes e_{v_{k}}+\sum_{i=0}^{k-1}I_{i}.
\end{align*}
Hence rearranging we obtain $e_{v_{1}}\otimes\cdots\otimes e_{v_{k}}\in\SS_{\Gamma}.\Omega$.
\end{proof}
Let 
\[
\varphi(t)\eqdf\begin{cases}
-\pi & t\leq-2\\
\int_{0}^{t}\sqrt{4-s^{2}}ds & -2\leq t\leq2\\
\pi & 2\leq t
\end{cases}
\]
 and 
\[
\psi(t)\eqdf\exp(i\varphi(t)),
\]
so $\psi:\R\to S^{1}$ is $C^{1}$. The map $\psi$ is a bijection
when restricted to $(-2,2]$. Let $\psi^{-1}:S^{1}\to(-2,2]$ denote
the corresponding inverse. Each $v\in\V$ corresponds to a unitary
$\lambda(v)\in\U(\ell^{2}(G\Gamma))$ where $\lambda$ is the left
regular representation. We are now ready to prove the main result
of this $\S\S$.
\begin{prop}
\label{prop:main-Cstar}The assignment $\lambda(v)\mapsto\psi(s_{v})$
extends to an injective $*$-homomorphism
\[
C_{\red}^{*}(G\Gamma)\hookrightarrow\SS_{\Gamma}.
\]
Moreover, the state $\tau_{\vac}$ on $\SS_{\Gamma}$ is a faithful
trace. 
\end{prop}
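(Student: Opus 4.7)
The plan is to define $\pi$ by continuous functional calculus, then identify the pullback state $\tau_{\vac}\circ\pi$ on $C^*(G\Gamma)$ with the canonical trace via Proposition \ref{prop:right-angle-determine-state}, and finally promote this to the desired $C^*$- and von Neumann-algebraic statements via a GNS comparison. To start, note that $\|s_v\|\leq 2$ and, by Lemma \ref{lem:semicircular-right-angled-system} combined with the faithful GNS of Lemma \ref{lem:The-GNS-representation}, the spectrum of $s_v$ is exactly $[-2,2]$. Since $\psi:[-2,2]\to S^1$ is continuous and unimodular, continuous functional calculus produces a unitary $\psi(s_v)\in\SS_\Gamma$; and for $v\sim w$ the generators $s_v,s_w$ commute in $\O_\Gamma$ (a direct consequence of \textbf{T1}), hence $\psi(s_v)\psi(s_w)=\psi(s_w)\psi(s_v)$. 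The universal property of $C^*(G\Gamma)$ therefore yields a unital $*$-homomorphism $\pi:C^*(G\Gamma)\to\SS_\Gamma$ sending the universal unitary $u_v$ to $\psi(s_v)$.

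Next, I would show that $\eta:=\tau_{\vac}\circ\pi$ and the canonical trace $\tau_{\mathrm{can}}$ of $G\Gamma$ (pulled back to $C^*(G\Gamma)$) coincide. The unital $*$-subalgebras $\B_v\subseteq C^*(G\Gamma)$ generated by the $u_v$ form a $\Gamma$-right-angled family for $\tau_{\mathrm{can}}$ by pulling back Example \ref{exa:The--subalgebras-of} along the canonical quotient $\lambda_{G\Gamma}:C^*(G\Gamma)\twoheadrightarrow C_{\red}^*(G\Gamma)$; they are also a $\Gamma$-right-angled family for $\eta$, since $\pi(\B_v)$ lies in the $\Gamma$-right-angled family of Lemma \ref{lem:The--subalgebras-right-angled}, and this property descends to $*$-subalgebras and to pullbacks along state-preserving $*$-homomorphisms. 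On each $\B_v$ the two states coincide because $u_v$ is Haar-distributed on $S^1$ under both: for $\tau_{\mathrm{can}}$ because $v$ has infinite order in $G\Gamma$, and for $\eta$ by Lemma \ref{lem:semicircular-right-angled-system} and the change of variables $\theta=\varphi(t)$, $d\theta=\sqrt{4-t^2}\,dt$,
\[
\tau_{\vac}(\psi(s_v)^k)=\frac{1}{2\pi}\int_{-2}^{2}e^{ik\varphi(t)}\sqrt{4-t^2}\,dt=\frac{1}{2\pi}\int_{-\pi}^{\pi}e^{ik\theta}\,d\theta=\delta_{k,0}.
\]
Proposition \ref{prop:right-angle-determine-state} then forces $\eta=\tau_{\mathrm{can}}$ on $C^*(G\Gamma)$.

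To upgrade this to an injective embedding I would compare GNS representations after showing that $\pi(C^*(G\Gamma))$ and $\SS_\Gamma$ generate the same von Neumann algebra $\mathcal{M}\subseteq B(\H_\Gamma)$. Since the spectral measure of $s_v$ on $\Omega$ is atomless, the Borel function $\psi^{-1}\circ\psi$ agrees with the identity almost everywhere, so $s_v=(\psi^{-1}\circ\psi)(s_v)\in\{\psi(s_v)\}''$; together with Kaplansky density and Lemma \ref{lem:The-GNS-representation} this yields $\overline{\pi(C^*(G\Gamma))\Omega}=\H_\Gamma$. Hence the composition of $\pi$ with $\SS_\Gamma\hookrightarrow B(\H_\Gamma)$ is a cyclic representation of $C^*(G\Gamma)$ with vector state $\eta=\tau_{\mathrm{can}}$, and by uniqueness of the GNS construction it is unitarily equivalent to the left regular representation. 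This produces an isomorphism $\pi(C^*(G\Gamma))\cong C_{\red}^*(G\Gamma)$ sending $\lambda_{G\Gamma}(v)\mapsto\psi(s_v)$ and so the required injective $*$-homomorphism $C_{\red}^*(G\Gamma)\hookrightarrow\SS_\Gamma$.

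The faithful trace claim then falls out of the same identification: the GNS unitary identifies $\mathcal{M}$ with the group von Neumann algebra $L(G\Gamma)$ in its standard form (with $\Omega\leftrightarrow\delta_e$ cyclic and separating), so the vector state $\langle\cdot\,\Omega,\Omega\rangle$ is a faithful normal tracial state on $\mathcal{M}$, and restricting to $\SS_\Gamma\subseteq\mathcal{M}$ gives the desired faithful trace property of $\tau_{\vac}$. The main subtlety is the Borel functional calculus step, which is what enables the faithfulness and traciality on $\pi(C^*(G\Gamma))\cong C_{\red}^*(G\Gamma)$ (inherited from $\tau_{\mathrm{can}}$) to propagate to all of $\SS_\Gamma$.
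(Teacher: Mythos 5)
Your route is genuinely different from the paper's, and at the level of strategy it works. The paper runs the functional calculus in the opposite direction: inside $L(G\Gamma)$ it defines $S_{v}:=\psi^{-1}(\lambda(v))$ by Borel functional calculus (where $\psi\circ\psi^{-1}=\mathrm{id}_{S^{1}}$ holds exactly, so no almost-everywhere issue arises), verifies that the algebras generated by the $S_{v}$ are $\Gamma$-right-angled w.r.t.\ $\tau_{G\Gamma}$ --- which requires a weak-operator-topology approximation argument and normality of the trace, since $S_{v}$ lies only in the von Neumann algebra of $\lambda(v)$ --- and then invokes Voiculescu's remark on state-preserving isomorphisms between $C^{*}$-algebras with faithful GNS representations. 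Your version replaces that approximation step by a cleaner matching of states on the universal algebra $C^{*}(G\Gamma)$: right-angledness of your $\mathcal{B}_{v}$ for both $\tau_{\mathrm{can}}$ and $\tau_{\vac}\circ\pi$ is immediate from Example \ref{exa:The--subalgebras-of} and Lemma \ref{lem:The--subalgebras-right-angled}, the Haar/semicircle moment computation plus Proposition \ref{prop:right-angle-determine-state} gives $\tau_{\vac}\circ\pi=\tau_{\mathrm{can}}$, and GNS uniqueness then yields both the isometric factorization through $C_{\red}^{*}(G\Gamma)$ and, via the standard form of $L(G\Gamma)$, faithfulness and traciality of $\tau_{\vac}$ on $\SS_{\Gamma}$. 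That is a genuine simplification of the distribution-matching part of the argument.

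The one real gap is precisely the step you flag: the claim $s_{v}\in\{\psi(s_{v})\}''$. You justify $(\psi^{-1}\circ\psi)(s_{v})=s_{v}$ by atomlessness of the distribution of $s_{v}$ at the single vector $\Omega$, but $\psi^{-1}\circ\psi$ differs from the identity at the point $-2$, so the operator identity needs the \emph{projection-valued} spectral measure of $s_{v}$ on all of $\H_{\Gamma}$ to have no atom at $-2$; the scalar measure $\langle E_{s_{v}}(\cdot)\Omega,\Omega\rangle$ controls this only if $\Omega$ were known to be separating for $W^{*}(s_{v})$, which you do not establish --- and everything downstream (cyclicity of $\Omega$ for $\pi(C^{*}(G\Gamma))$, the inclusion $\SS_{\Gamma}\subseteq\pi(C^{*}(G\Gamma))''$, hence both conclusions of the Proposition) rests on this. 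Fortunately the missing fact is true and has a short proof: $-2$ is not an eigenvalue of $s_{v}$, since if $(s_{v}+2)\xi=0$ then, using $\ell_{v}^{*}\ell_{v}=1$,
\[
\|(\ell_{v}+1)\xi\|^{2}=\langle(\ell_{v}^{*}\ell_{v}+\ell_{v}+\ell_{v}^{*}+1)\xi,\xi\rangle=\langle(s_{v}+2)\xi,\xi\rangle=0,
\]
so $\ell_{v}\xi=-\xi$, whence $\xi=(-1)^{j}\ell_{v}^{j}\xi\in\ell_{v}^{j}\H_{\Gamma}\subseteq\overline{\bigoplus_{k\geq j}\chi_{k}^{\Gamma}(\C^{n})}$ for every $j$, forcing $\xi=0$. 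Hence $E_{s_{v}}(\{-2\})=0$ and $(\psi^{-1}\circ\psi)(s_{v})=s_{v}$ as you need. (Alternatively you could sidestep the issue entirely by running the functional calculus in the paper's direction, $S_{v}:=\psi^{-1}(\lambda(v))$ inside $L(G\Gamma)$, at the cost of reinstating the WOT-approximation argument.) With this repair your argument goes through.
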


\begin{proof}
The group von Neumann algebra\emph{ $L(G\Gamma)$ }in $\B(\ell^{2}(G\Gamma))$
has faithful trace $\tau_{G\Gamma}(a)\eqdf\langle a\delta_{e},\delta_{e}\rangle$.
For $v\in V$ let $S_{v}\eqdf\psi^{-1}(\lambda(v))$ be the result
of applying Borel functional calculus for normal operators to $\psi^{-1}$
and $\lambda(v)$ --- note that $S_{v}$ is in the von Neumann algebra
$L(\lambda(v))$ generated by $\lambda(v)$ in $L(G\Gamma)$. Let
$\SS'_{\Gamma}$ denote the $C^{*}$-algebra generated by the $S_{v}$. 

\emph{Claim 1. }Each $S_{v}$ is semicircular w.r.t. $\tau_{G\Gamma}$.
This is because, as in \cite[\S 8]{HaagerupThr} the push-forward
$\psi_{*}^{-1}(\mathrm{Haar}_{S^{1}})$ has density $\varphi'(t)\mathrm{d}t$
on the real line, hence is the semicircle law. On the other hand,
by the Borel functional calculus, $\psi_{*}^{-1}(\mathrm{Haar}_{S^{1}})$
is the law of $S_{v}$. 

\emph{Claim 2. }The GNS representation of $\SS'_{\Gamma}$ w.r.t.
$\tau_{G\Gamma}$ is faithful --- indeed the state $\tau_{G\Gamma}$
is faithful on $\SS'_{\Gamma}$ being the restriction of a faithful
state.

\emph{Claim 3. }The algebras $\A_{v}$ generated by the $S_{v}$ are
$\Gamma$-right-angled subalgebras w.r.t. $\tau_{G\Gamma}$. 

\emph{Proof of Claim 3. }To see the first condition of Definition
\ref{def:right-angled}, if $v\sim w$ in $\Gamma$, it is a basic
fact (following e.g. from the double commutant theorem) that $\lambda(v),\lambda(w)$
commuting imply that $L(\lambda(v))$ and $L(\lambda(w))$ are commuting
von Neumann algebras. These contain $\A_{v}$ and $\A_{w}$ respectively. 

For the second condition of Definition \ref{def:right-angled}, consider
$q\geq0$ such that there exist $\Gamma$-reduced $(v_{j})_{j=1}^{m}\subseteq\V$,
$a_{j}\in\mathcal{A}_{v_{j}}$ with $\tau_{G\Gamma}(a_{j})=0$, $a_{i}$
is a $*$-polynomial of $\lambda(v_{i})$ for $i\leq m-q$, and 
\[
\tau_{G\Gamma}(a_{1}\cdots a_{m})\neq0.
\]
By Example \ref{exa:The--subalgebras-of}, the set of such $q$ does
not contain zero. The second condition of Definition \ref{def:right-angled}
is equivalent to there being no such $q$, so for the sake of a contradiction
suppose $q>0$ is minimal such that the above holds, and let other
notation be as before. Let $k\eqdf m-q$. The mapping 
\[
b\mapsto\tau_{G\Gamma}(a_{1}\cdots a_{k}ba_{k+2}\cdots a_{m})
\]
is continuous in $b$ since $\tau_{G\Gamma}$ is obviously normal.
Since $a_{k+1}$ is a polynomial in $S_{v_{k+1}}$, it is in the von
Neumann algebra $L(\lambda(v_{k+1}))$ and hence can be approximated
in the weak operator topology by $b$ that is a $*$-polynomial of
$\lambda(v_{k+1})$. Furthermore, since $\tau_{G\Gamma}(a_{k+1})=0$,
by replacing $b$ by $b-\tau_{G\Gamma}(b)$ in this approximation
we can assume $\tau_{G\Gamma}(b)=0$. Then 
\[
0\neq\tau_{G\Gamma}(a_{1}\cdots a_{m})=\lim_{b\stackrel{\mathrm{W.O.T.}}{\to}a_{k+1}}\tau_{G\Gamma}(a_{1}\cdots a_{k}ba_{k+2}\cdots a_{m})=0
\]
is a contradiction. \emph{This ends the proof of Claim 3.}

Claim 1 and Claim 3 above imply that the joint distribution of the
$S_{v}$ w.r.t. $\tau_{G\Gamma}$ is the same as that of the $s_{v}$
w.r.t. $\tau_{\vac}$ by Proposition \ref{prop:right-angle-determine-state}
and Lemmas \ref{lem:The--subalgebras-right-angled} and \ref{lem:semicircular-right-angled-system}.
Since both $(\SS_{\Gamma},\tau_{\vac})$ --- by Lemma \ref{lem:The-GNS-representation}
--- and $(\SS'_{\Gamma},\tau_{G\Gamma})$ --- by Claim 2 --- have
faithful GNS representations, by the remark made by Voiculescu in
\cite[Rmk. 1.8]{VoiculescuCircular} the mapping $S_{v}\mapsto s_{v}$
extends to a state-preserving isomorphism
\[
(\SS'_{G\Gamma},\tau_{G\Gamma})\cong(\SS_{\Gamma},\tau_{\vac})
\]
 of $C^{*}$-algebras. This shows that $\tau_{\vac}$ is a faithful
trace on $\SS_{\Gamma}$. The isomorphism above sends
\[
\psi(S_{v})=\lambda(v)\mapsto\psi(s_{v}).
\]
Note that $\psi$ is continuous here so $\psi(s_{v})\in\SS_{\Gamma}$
by the continuous functional calculus.
\end{proof}

\section{First strong limit}

\subsection{Random matrices}

We follow \cite{HaagerupThr} and say that all random matrices are
real or complex matrix valued random variables on the same probability
space $(\Omega,\F,P)$. We say an event holds almost surely (a.s.)
if there is a $P$-null set $N\subset\Omega$ such that it holds outside
$N$. Let $\GRM(n,\sigma^{2})$ be the class of $n\times n$ complex
random matrices whose entries are i.i.d. complex normal random variables
in $\mathcal{CN}(0,\sigma^{2})$ and recall the definition of $\SGRM(n,\sigma^{2})$
from $\S\S$\ref{subsec:The-random-matrix}. Two basic facts will
be used:
\begin{fact}
\label{fact:GRMtoSGRM}If $Y$ is in $\GRM(n,\sigma^{2})$ then 
\[
X_{1}=\frac{1}{\sqrt{2}}(Y+Y^{*}),\,X_{2}=\frac{-i}{\sqrt{2}}\left(Y-Y^{*}\right)
\]
are independent elements of $\SGRM(n,\sigma^{2})$, and 
\[
Y=\frac{1}{\sqrt{2}}\left(X_{1}+iX_{2}\right).
\]
\end{fact}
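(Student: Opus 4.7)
The plan is to unwind the definitions into real and imaginary parts and check everything by direct computation, since this is essentially a change-of-variables identity for real Gaussians. The one point requiring a moment of thought is independence of $X_1$ and $X_2$; we will exploit the Gaussianity to reduce independence to a covariance computation, equivalently to observing that the two matrices are built from disjoint sets of the underlying real Gaussian random variables.

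First I would write $Y_{ij}=A_{ij}+iB_{ij}$, where by definition of $\mathcal{CN}(0,\sigma^{2})$ the real random variables $\{A_{ij}\}_{i,j}\cup\{B_{ij}\}_{i,j}$ are i.i.d.\ $\mathcal{N}(0,\sigma^{2}/2)$. Substituting into $X_{1}=\tfrac{1}{\sqrt 2}(Y+Y^{*})$ gives, for $i<j$, $(X_{1})_{ij}=\tfrac{1}{\sqrt 2}\bigl((A_{ij}+A_{ji})+i(B_{ij}-B_{ji})\bigr)$ and $(X_{1})_{ii}=\sqrt{2}A_{ii}$; and for $X_{2}=\tfrac{-i}{\sqrt 2}(Y-Y^{*})$ one gets $(X_{2})_{ij}=\tfrac{1}{\sqrt 2}\bigl((B_{ij}+B_{ji})-i(A_{ij}-A_{ji})\bigr)$ and $(X_{2})_{ii}=\sqrt{2}B_{ii}$. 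In each case $(X_k)_{ij}=\overline{(X_k)_{ji}}$, so $X_{1}$ and $X_{2}$ are self-adjoint.

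Next I would match distributions against the definition of $\SGRM(n,\sigma^{2})$ in $\S\S$\ref{subsec:The-random-matrix}. The diagonals $(X_{k})_{ii}$ are Gaussians of variance $\sigma^{2}$ as required. For $i<j$, the quantity $\sqrt 2\,\Re(X_{1})_{ij}=A_{ij}+A_{ji}$ is $\mathcal{N}(0,\sigma^{2})$, and likewise for $\sqrt 2\,\Im(X_{1})_{ij}$, $\sqrt 2\,\Re(X_{2})_{ij}$, $\sqrt 2\,\Im(X_{2})_{ij}$. Thus the marginals are correct.

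The key remaining point is joint independence. Observe that $X_{1}$ is a linear function of the family
\[
\mathcal{F}_{1}=\{A_{ii}\}\cup\{A_{ij}+A_{ji},\,B_{ij}-B_{ji}:i<j\},
\]
while $X_{2}$ is a linear function of
\[
\mathcal{F}_{2}=\{B_{ii}\}\cup\{B_{ij}+B_{ji},\,A_{ij}-A_{ji}:i<j\}.
\]
Since $(A_{ij}+A_{ji},A_{ij}-A_{ji})$ are jointly Gaussian and uncorrelated, hence independent (and likewise for the $B$'s), and the $A$'s are independent of the $B$'s, all the random variables in $\mathcal{F}_{1}\cup\mathcal{F}_{2}$ are jointly Gaussian and pairwise uncorrelated, hence independent. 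This simultaneously gives that the entries within each of $X_{1},X_{2}$ listed in the $\SGRM$ definition are independent, and that $X_{1}\perp\!\!\!\perp X_{2}$. Finally, the identity $\tfrac{1}{\sqrt 2}(X_{1}+iX_{2})=\tfrac{1}{\sqrt 2}\bigl(\tfrac{1}{\sqrt 2}(Y+Y^{*})+\tfrac{1}{\sqrt 2}(Y-Y^{*})\bigr)=Y$ is immediate. The only mildly delicate step is the covariance bookkeeping for independence, but the orthogonal decomposition into sum/difference variables makes this transparent.
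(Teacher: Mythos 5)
Your proof is correct; the paper states this as a basic Fact without supplying a proof, and your argument --- decomposing $Y_{ij}=A_{ij}+iB_{ij}$ with $A_{ij},B_{ij}$ i.i.d.\ $\mathcal{N}(0,\sigma^{2}/2)$, checking the marginals entrywise against the $\SGRM$ definition, and deducing joint independence from the fact that the sum/difference variables are jointly Gaussian and pairwise uncorrelated --- is exactly the standard verification one would write down. The only implicit point worth flagging is that you use the normalization convention for $\mathcal{CN}(0,\sigma^{2})$ in which real and imaginary parts have variance $\sigma^{2}/2$; this is the convention intended by the paper (following Haagerup--Thorbj\o rnsen), and with it all variance and covariance bookkeeping in your computation checks out.
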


\begin{fact}
\label{fact:SGRMtoGRM}If $X_{1},X_{2}$ are independent elements
of $\SGRM(n,\sigma^{2})$ then $Y=\frac{1}{\sqrt{2}}(X_{1}+iX_{2})$
is in $\GRM(n,\sigma^{2})$ and $\frac{1}{\sqrt{2}}(Y+Y^{*})=X_{1}$.
In particular, if $X\in\SGRM(n,\sigma^{2})$ there is $Y\in\GRM(n,\sigma^{2})$
such that 
\[
X=\frac{1}{\sqrt{2}}\left(Y+Y^{*}\right).
\]
\end{fact}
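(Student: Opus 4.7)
The plan is to verify Fact \ref{fact:SGRMtoGRM} by direct computation on matrix entries; the statement is essentially the inverse of Fact \ref{fact:GRMtoSGRM}, so I would set things up to mirror that decomposition.

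First, write $X_{1}=(x^{(1)}_{ij})$ and $X_{2}=(x^{(2)}_{ij})$ in terms of their underlying real Gaussians as prescribed by the definition of $\SGRM(n,\sigma^{2})$: diagonal entries $x^{(k)}_{ii}\sim\mathcal{N}(0,\sigma^{2})$, and for $i<j$ write $x^{(k)}_{ij}=\tfrac{1}{\sqrt{2}}(a^{(k)}_{ij}+ib^{(k)}_{ij})$ with $a^{(k)}_{ij},b^{(k)}_{ij}\sim\mathcal{N}(0,\sigma^{2})$, all jointly independent across $k\in\{1,2\}$ and across index pairs. Set $Y=\tfrac{1}{\sqrt{2}}(X_{1}+iX_{2})$ and compute the entries: on the diagonal, $Y_{ii}=\tfrac{1}{\sqrt{2}}(x^{(1)}_{ii}+ix^{(2)}_{ii})$, which is visibly $\mathcal{CN}(0,\sigma^{2})$ and independent across $i$.

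The key step is off-diagonal independence. For $i<j$, substitute the expansions of $x^{(1)}_{ij}$ and $x^{(2)}_{ij}$ to express both $Y_{ij}$ and $Y_{ji}=\tfrac{1}{\sqrt{2}}(\overline{x^{(1)}_{ij}}+i\overline{x^{(2)}_{ij}})$ as linear combinations of the four independent real Gaussians $a^{(1)}_{ij},b^{(1)}_{ij},a^{(2)}_{ij},b^{(2)}_{ij}$. Since jointly Gaussian vectors are independent iff their covariances vanish, I would compute the four cross-covariances $\operatorname{Cov}(\operatorname{Re}Y_{ij},\operatorname{Re}Y_{ji})$, $\operatorname{Cov}(\operatorname{Im}Y_{ij},\operatorname{Im}Y_{ji})$, $\operatorname{Cov}(\operatorname{Re}Y_{ij},\operatorname{Im}Y_{ji})$, $\operatorname{Cov}(\operatorname{Im}Y_{ij},\operatorname{Re}Y_{ji})$ and verify each is zero — the pattern being that each pair either splits into disjoint subsets of the four underlying Gaussians, or splits as a sum and difference whose covariance cancels because the underlying variances agree. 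A similar internal check confirms $Y_{ij}\sim\mathcal{CN}(0,\sigma^{2})$ and $Y_{ji}\sim\mathcal{CN}(0,\sigma^{2})$, each with independent real and imaginary parts of variance $\sigma^{2}/2$. Independence across distinct unordered index pairs is immediate from independence of the underlying Gaussians. This proves $Y\in\GRM(n,\sigma^{2})$.

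Next, verify the algebraic identity $\tfrac{1}{\sqrt{2}}(Y+Y^{*})=X_{1}$: using $(Y^{*})_{ij}=\overline{Y_{ji}}$ and Hermiticity of $X_{1},X_{2}$, the $(i,j)$ entry reduces to $\tfrac{1}{2}(x^{(1)}_{ij}+ix^{(2)}_{ij}+x^{(1)}_{ij}-ix^{(2)}_{ij})=x^{(1)}_{ij}$ after a cancellation of the imaginary contribution. Finally, for the ``in particular'' clause, given $X\in\SGRM(n,\sigma^{2})$ I would enlarge the probability space if necessary to carry an independent $X_{2}\in\SGRM(n,\sigma^{2})$, and apply the first part with $X_{1}:=X$. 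No serious obstacle is anticipated; the only care needed is in the covariance bookkeeping for the off-diagonal pairs, which is why I would lay out the real/imaginary parts explicitly before computing.
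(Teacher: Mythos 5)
Your proof is correct. The paper states Fact \ref{fact:SGRMtoGRM} with no proof at all (it is offered as one of ``two basic facts''), so there is nothing to compare against; your entry-wise verification --- writing $Y_{ij}$ and $Y_{ji}$ in terms of the four underlying real Gaussians $a^{(1)}_{ij},b^{(1)}_{ij},a^{(2)}_{ij},b^{(2)}_{ij}$, checking that the cross-covariances vanish (either by disjointness or by the sum/difference cancellation) and invoking joint Gaussianity to get independence --- is the standard and complete way to check it, and both the identity $\tfrac{1}{\sqrt{2}}(Y+Y^{*})=X_{1}$ and the ``in particular'' clause (adjoining an independent $\SGRM(n,\sigma^{2})$ copy) are handled correctly.
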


\subsection{Block structure of SGRM matrices\label{subsec:Block-structure-of}}

Recall from $\S\S\ref{subsec:The-random-matrix}$ that $\tilde{X}_{v}\in\SGRM\left(\K(v)m^{|F(v)|},\frac{1}{\K(v)m^{|F(v)|}}\right).$
We now want to understand what random matrices $B_{\e}$ we get if
we decompose
\[
\tilde{X}_{v}=\sum_{\e}\varepsilon\otimes B_{\e}\in\End((\C^{m})^{\otimes F})
\]
where $\e$ run over matrix units in $\End((\C^{m}){}^{\otimes F(v)})$.
One nice way to do this is as follows. We will use the standard basis
of $(\C^{m}){}^{\otimes F}$ coming from the simple tensors of standard
bases of $\C^{m}$. If $F'\subset F$ and $I,J\in[m]^{F'}$ let 
\[
e_{I}\eqdf\otimes_{f\in F'}e_{I(f)}\in(\C^{m}){}^{\otimes F'}
\]
where $e_{i}$ are the standard orthonormal basis of $\C^{m}$ w.r.t.
the fixed standard Hermitian form. We write $\check{e}_{I}$ for the
dual vector to $e_{I}$ and 
\[
\e_{IJ}\eqdf e_{I}\otimes\check{e}_{J}\in\End((\C^{m}){}^{\otimes F'}).
\]

By Fact \ref{fact:SGRMtoGRM} we can write
\begin{equation}
\tilde{X}_{v}=\frac{1}{\sqrt{2}}(R_{v}+R_{v}^{*})\label{eq:rm0}
\end{equation}
where $(R_{v})_{v\in\V}$ are independent elements of $\GRM\left(\K(v)m^{|F(v)|},\frac{1}{\K(v)m^{|F(v)|}}\right)$.
Now the problem is easier because each $R_{v}$ has no symmetry. We
have 
\begin{equation}
R_{v}=\frac{1}{\sqrt{m^{|F(v)|}}}\sum_{\substack{I,J\in[m]^{F(v)}}
}\e_{IJ}\otimes Q_{IJ}^{v}\label{eq:rm1}
\end{equation}
where $(Q_{IJ}^{v})_{I,J\in[m]^{F(v)},v\in\V}$ are independent and
$Q_{IJ}^{v}\in\GRM\left(\K(v),\frac{1}{\K(v)}\right)$. Now, to return
to $\SGRM$ matrices, by Fact \ref{fact:GRMtoSGRM} we have
\begin{equation}
Q_{IJ}^{v}=\frac{1}{\sqrt{2}}\left(X_{IJ}^{v}+iY_{IJ}^{v}\right)\label{eq:rm2}
\end{equation}
where $(X_{IJ}^{v})_{I,J\in[m]^{F(v)},v\in\V},(Y_{IJ}^{v})_{I,J\in[m]^{F(v)},v\in\V}$
are independent and 
\[
X_{IJ}^{v},Y_{IJ}^{v}\in\SGRM\left(\K(v),\frac{1}{\K(v)}\right).
\]
Combining (\ref{eq:rm00}), (\ref{eq:rm0}), (\ref{eq:rm1}), and
(\ref{eq:rm2}) we obtain

\begin{align}
X_{v}^{(m,\K)} & =\frac{1}{2\sqrt{m^{|F(v)|}}}\id_{(\C^{m}){}^{\otimes F\backslash F(v)}}\otimes\sum_{\substack{I,J\in[m]^{F(v)}}
}\left(\e_{IJ}+\e_{JI}\right)\otimes[X_{IJ}^{v}]\otimes\id_{\C_{V\backslash\{v\}}^{\otimes\K}}\nonumber \\
 & \,\,\,\,\,\,\,\,\,\,\,\,\,\,\,\,\,\,\,\,\,\,\,\,\,\,\,\,\,\,\,\,\,\,\,\,\,\,\,\,+i\left(\e_{IJ}-\e_{JI}\right)\otimes[Y_{IJ}^{v}]\otimes\id_{\C_{V\backslash\{v\}}^{\otimes\K}}.\label{eq:X-def}
\end{align}

\subsection{First strong convergence result}

For each $v\in V$ let $\O_{v}(m)\eqdf\O_{2m^{2|F(v)|}}$ denote the
Cuntz-Toeplitz $C^{*}$-algebra --- as in Remark \ref{rem:CT-algebra}
--- generated by $2m^{2|F(v)|}$ free creation operators 
\begin{align*}
\{x_{IJ}^{v+},\,x_{IJ}^{v-} & :\,I,J\in[m]^{F(v)}\,\}.
\end{align*}
Let
\[
\O^{V}(m)\eqdf\bigotimes_{v}^{\min}\O_{v}(m).
\]
\emph{Every tensor product of $C^{*}$-algebras in this paper, including
the above, is the minimal (spatial) tensor product.}

Motivated by (\ref{eq:X-def}), let
\begin{align*}
r_{IJ}^{+} & \eqdf\e_{IJ}+\e_{JI},\\
r_{IJ}^{-} & \eqdf i(\e_{IJ}-\e_{JI}).
\end{align*}
Still with (\ref{eq:X-def}) in mind, we consider elements $L_{v}^{(m)}\in\End((\C^{m}){}^{\otimes F})\otimes\O^{V}(m)$
\begin{align}
L_{v}^{(m)} & \eqdf\frac{\id_{(\C^{m}){}^{\otimes F\backslash F(v)}}}{2\sqrt{m^{|F(v)|}}}\otimes\left(\sum_{\substack{I,J\in[m]^{F(v)}}
}r_{IJ}^{+}\otimes x_{IJ}^{v+}+r_{IJ}^{-}\otimes x_{IJ}^{v-}\right)\otimes\id_{\bigotimes_{w\neq v}\O_{w}(m)},\label{eq:L-def}\\
L_{v}^{(m)*} & =\frac{\id_{(\C^{m}){}^{\otimes F\backslash F(v)}}}{2\sqrt{m^{|F(v)|}}}\otimes\left(\sum_{\substack{I,J\in[m]^{F(v)}}
}r_{IJ}^{+}\otimes[x_{IJ}^{v+}]^{*}+r_{IJ}^{-}\otimes[x_{IJ}^{v-}]^{*}\right)\otimes\id_{\bigotimes_{w\neq v}\O_{w}(m)}.\nonumber 
\end{align}

We need the following result of Collins, Guionnet, and Parraud \cite[Thm. 1.2, pt. 2]{CollinsGuionnetParraud}
that we recall for the convenience of the reader. Let $\tr$ denote
normalized trace on matrices.
\begin{thm}[Collins---Guionnet---Parraud]
\label{thm:CGP}Suppose $X_{1}^{(N)},\ldots,X_{d}^{(N)}$ are independent
elements of $\SGRM(N,N^{-1})$. Let $(s_{1},\ldots,s_{d})$ be a tuple
of free semcircular random variables in a $C^{*}$-probability space
$(\SS,\tau)$. Suppose that $Y_{1}^{(M)},\ldots,Y_{D}^{(M)}$ are
random matrices of dimension $M$ on $(\Omega,\F,P)$, independent
of the $X_{i}^{(N)}$, and suppose further that there is a $D$-tuple
$y_{1},\ldots,y_{D}$ of n.c. random variables in a $C^{*}$-probability
space $(B,\tau_{B})$ with $\tau_{B}$ faithful. If 
\[
M=M(N)=o(N^{\frac{1}{3}})
\]
 and a.s. for any n.c. polynomial $p$ in $D$ variables
\begin{align*}
\|p(Y_{1}^{(M)},\ldots,Y_{D}^{(M)})\| & \to_{M\to\infty}\|p(y_{1},\ldots,y_{D})\|,\\
\tr\left(p(Y_{1}^{(M)},\ldots,Y_{D}^{(M)})\right) & \to_{M\to\infty}\tau_{B}\left(p(y_{1},\ldots,y_{D})\right),
\end{align*}
then a.s. for any n.c. polynomial $p$ in $d+D$ variables, as $N\to\infty$,
\begin{align*}
 & \|p(X_{1}^{(N)}\otimes\id_{M},\ldots,X_{d}^{(N)}\otimes\id_{M},\id_{N}\otimes Y_{1}^{(M)},\ldots,.\id_{N}\otimes Y_{D}^{(M)})\|\to\\
 & \|p(s_{1}\otimes1_{B},\ldots,s_{d}\otimes1_{B},1_{\SS}\otimes y_{1},\ldots,1_{\SS}\otimes y_{D})\|
\end{align*}
and 
\begin{align*}
 & \tr(p(X_{1}^{(N)}\otimes\id_{M},\ldots,X_{d}^{(N)}\otimes\id_{M},\id_{N}\otimes Y_{1}^{(M)},\ldots,.\id_{N}\otimes Y_{D}^{(M)}))\to\\
 & [\tau_{\SS}\otimes\tau_{B}](p(s_{1}\otimes1_{B},\ldots,s_{d}\otimes1_{B},1_{\SS}\otimes y_{1},\ldots,1_{\SS}\otimes y_{D})).
\end{align*}
\end{thm}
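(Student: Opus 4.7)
The strategy I would follow generalizes Haagerup--Thorbj\o rnsen's technology from the scalar to the operator-valued setting. The proof naturally splits into (i) almost sure convergence of normalized traces, and (ii) upgrading this to almost sure convergence of operator norms; step (ii) is where the real work lies.

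For the trace convergence, a.s.\ asymptotic freeness of independent GUE matrices from arbitrary deterministic (or independent random) matrices, which is essentially Voiculescu's theorem combined with Gaussian concentration, gives
\[
\tr\bigl(p(X_1^{(N)}\otimes\id_M,\ldots,\id_N\otimes Y_D^{(M)})\bigr)\to[\tau_\SS\otimes\tau_B]\bigl(p(s_1\otimes 1,\ldots,1\otimes y_D)\bigr)
\]
whenever the $Y_j^{(M)}$ converge in trace to the $y_j$. Conditioning on the $Y_j$'s, this reduces to the statement that GUE matrices tensored with identity have joint $\tr$-distribution converging to $(s_i\otimes 1)$ jointly with whatever is placed in the second factor; concentration of Lipschitz functions of Gaussians (Gaussian Poincar\'e/Herbst) then removes the expectation.

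For the norm convergence, I would use Haagerup's linearization trick: for any selfadjoint n.c.\ polynomial $p$ and any real $\lambda$, the question of whether $\lambda\in\spec(p)$ is equivalent to invertibility of an affine selfadjoint pencil
\[
L^{(N,M)}=a_0\otimes\id_{NM}+\sum_i a_i\otimes X_i^{(N)}\otimes\id_M+\sum_j b_j\otimes\id_N\otimes Y_j^{(M)},
\]
with coefficients $a_0,a_i,b_j$ in a fixed matrix algebra $M_k(\C)$. Thus it suffices to show that for every $z$ in the resolvent set of the limiting pencil $L^{(\infty)}=a_0\otimes 1+\sum a_i\otimes s_i\otimes 1+\sum b_j\otimes 1\otimes y_j$, the resolvent $(z-L^{(N,M)})^{-1}$ exists and is uniformly bounded a.s.\ for $N$ large. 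I would establish this through a Schwinger--Dyson type master equation obtained by Gaussian integration by parts in the entries of the $X_i^{(N)}$: conditionally on the $Y_j^{(M)}$, the resolvent $G_N(z)=\E[(z-L^{(N,M)})^{-1}]$ satisfies a self-consistent equation whose leading term is solved by the operator-valued Stieltjes transform of $L^{(\infty)}$, and whose error is controlled by an expansion in $1/N^2$ with coefficients that, because of the tensor factor of dimension $M$, grow polynomially in $M$. The standing hypothesis $M=o(N^{1/3})$ is tuned exactly to make these higher-order corrections vanish.

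The main obstacle is step (ii), and specifically the need to carry through the topological/Schwinger--Dyson expansion at operator-valued level. Unlike the original Haagerup--Thorbj\o rnsen setting, the ``amalgamation algebra'' here is $M_k(\C)\otimes\End(\C^M)$ with traces that are not states on the individual factors in a simple way, so one must establish operator-norm (not just trace) estimates on each term of the expansion. Once the resolvent bound holds uniformly on compact subsets of the complement of $\spec(L^{(\infty)})$ with high probability, a Borel--Cantelli argument together with trace convergence (which pins down the limiting spectrum up to its support) then converts the expectation estimates to the a.s.\ statement, and the linearization trick transports this back to every polynomial $p$, completing the proof.
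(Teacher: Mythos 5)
This statement is not proved in the paper at all: it is quoted verbatim from Collins--Guionnet--Parraud \cite[Thm.~1.2, pt.~2]{CollinsGuionnetParraud} ``for the convenience of the reader,'' so there is no internal proof to compare against. Judged on its own terms, your submission is a proof \emph{plan} rather than a proof, and the plan omits exactly the step that constitutes the content of the theorem. The soft ingredients you list --- a.s.\ trace convergence via asymptotic freeness plus Gaussian concentration, the linearization trick reducing norm convergence to uniform resolvent bounds for a selfadjoint pencil, Borel--Cantelli to pass from expectation to a.s.\ statements --- are indeed the standard Haagerup--Thorbj\o rnsen scaffolding. But the heart of the matter is the quantitative claim you state in one sentence and never establish: that the corrections to the operator-valued master equation are controlled by an expansion in $1/N^{2}$ whose coefficients grow only polynomially in $M$, with a specific exponent that makes $M=o(N^{1/3})$ sufficient. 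Asserting that the hypothesis ``is tuned exactly to make these higher-order corrections vanish'' is circular; one must actually prove the bound (including uniform operator-norm, not just trace, control of each term, uniformly on compact subsets of the resolvent set of the limiting pencil), and this is precisely where Collins--Guionnet--Parraud's work lies. Without that estimate the argument does not yield the theorem.

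It is also worth noting that the actual proof in the cited paper does not follow the linearization-plus-Schwinger--Dyson route you sketch: it proceeds via an interpolation between the Gaussian matrices and free semicircular variables (free stochastic calculus in Parraud's style), producing an asymptotic expansion of $\E[\tr f]$ for smooth test functions with explicit polynomial dependence on the dimension of the tensor coefficients, and then deduces strong convergence by testing functions concentrated near the putative edge of the spectrum together with concentration. Your route is not unreasonable --- an operator-valued Haagerup--Thorbj\o rnsen argument could conceivably be pushed through --- but as written it leaves the decisive resolvent estimate, and hence the exponent $1/3$, unproved, so there is a genuine gap.
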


This will allow us to deduce the following.
\begin{thm}
\label{thm:first-convergence}There exists a sequence $\{\K^{(i)}\}_{i=1}^{\infty}$
with each $\K^{(i)}(v)\to_{i\to\infty}\infty$ such that a.s. for
any fixed $m\in\N$, for any n.c. polynomial $p$ in $|V|$ variables
\[
\lim_{i\to\infty}\|p(X_{v}^{(m,\K^{(i)})}\,:\,v\in V)\|=\|p(L_{v}^{(m)}+L_{v}^{(m)*}\,:\,v\in V)\|.
\]
The norm on the left is the operator norm w.r.t. the standard Hermitian
norm on $(\C^{m}){}^{\otimes F}\otimes\C^{\otimes\K}$. The norm on
the right is tensor product of operator norm on $(\C^{m}){}^{\otimes F}$
and the $C^{*}$-norm on $\O^{V}(m)$.
\end{thm}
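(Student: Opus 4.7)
Fix an ordering $v_1,\ldots,v_n$ of $V$. Via the block decomposition of $\S\S\ref{subsec:Block-structure-of}$, each $X_{v_j}^{(m,\K)}$ is, by (\ref{eq:X-def}), a polynomial with fixed coefficients $r_{IJ}^{\pm}$ in the independent scalar $\SGRM$ matrices $\{X_{IJ}^{v_j},Y_{IJ}^{v_j}\}_{I,J\in[m]^{F(v_j)}}$ of dimension $\K(v_j)$, tensored into the appropriate factors of $(\C^{m})^{\otimes F}\otimes\C^{\otimes\K}$. The scalar matrices for different $v_j$ act on independent tensor factors, so they commute after embedding, while those for a single $v_j$ are jointly independent. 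The target $L_{v_j}^{(m)}+L_{v_j}^{(m)*}$ of (\ref{eq:L-def}) is obtained by the \emph{same} polynomial with each $X_{IJ}^{v_j}$ replaced by $s_{IJ}^{v_j,+}\eqdf x_{IJ}^{v_j,+}+(x_{IJ}^{v_j,+})^{*}$ and each $Y_{IJ}^{v_j}$ by $s_{IJ}^{v_j,-}\eqdf x_{IJ}^{v_j,-}+(x_{IJ}^{v_j,-})^{*}$, where the $s$'s sit in the subalgebra $\SS_{v_j}(m)\subset\O_{v_j}(m)$ generated by them, and different $v_j$'s live in commuting tensor factors of $\bigotimes_v^{\min}\SS_v(m)$. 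So it suffices to prove joint strong convergence in norm and normalized trace of the whole family of building blocks to the analogous family in $\bigotimes_{j=1}^{n}\SS_{v_j}(m)\otimes\End((\C^{m})^{\otimes F})$.

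I would prove this by induction on $k=0,1,\ldots,n$, with hypothesis $(H_k)$ asserting joint strong convergence (in norm and trace) of the matrices $\{X_{IJ}^{v_j},Y_{IJ}^{v_j}\}_{j\le k}$, together with generators of $\End((\C^{m})^{\otimes F})$, to $\{s_{IJ}^{v_j,\pm}\}_{j\le k}$ together with the same generators, inside $\bigotimes_{j\le k}^{\min}\SS_{v_j}(m)\otimes\End((\C^{m})^{\otimes F})$ equipped with the tensor product of the restricted vacuum states $\tau_{\vac}|_{\SS_{v_j}(m)}$ and the normalized trace on $\End((\C^{m})^{\otimes F})$. The base case $k=0$ is immediate. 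For the inductive step, apply Theorem \ref{thm:CGP} with $d=2m^{2|F(v_{k+1})|}$ SGRM matrices of size $N=\K^{(i)}(v_{k+1})$ playing the role of the $X^{(N)}$'s, and the matrices supplied by $(H_k)$ (further tensored with $\id_N$) playing the role of the $Y^{(M)}$'s, where $M=m^{2|F|}\prod_{j\le k}\K^{(i)}(v_j)$. The CGP conclusion gives $(H_{k+1})$, and tensor-independence of the new semicirculars with the previous ones matches the tensor-product target algebra.

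To satisfy CGP's dimensional constraint $M=o(N^{1/3})$, I would choose $\K^{(i)}$ with fast-growing gaps, e.g.\ $\K^{(i)}(v_1)=i$ and $\K^{(i)}(v_{k+1})=\bigl(\prod_{j\le k}\K^{(i)}(v_j)\bigr)^{10}$. This choice is independent of $m$, so for every fixed $m$ one has $M/N^{1/3}\to0$ as $i\to\infty$. The induction produces an a.s.\ event $E_m$ of probability 1 for each $m$, and the required conclusion (holding a.s.\ for every fixed $m$) follows from $\P(\bigcap_{m\in\N}E_m)=1$. Evaluating the resulting joint convergence on the polynomial in the $r_{IJ}^{\pm}$'s and the SGRM blocks that encodes $p(X_v^{(m,\K^{(i)})}:v\in V)$ via (\ref{eq:X-def}), and identifying the limiting expression with $p(L_v^{(m)}+L_v^{(m)*}:v\in V)$ via (\ref{eq:L-def}), yields the theorem.

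\textbf{Main obstacle.} The hypothesis of Theorem \ref{thm:CGP} that the limit state $\tau_B$ be \emph{faithful} is delicate: the vacuum state $\tau_{\vac}$ on $\O_v(m)$ is not faithful (for example $\tau_{\vac}(\ell_{v}^{*}(\ell_{v}^{*})^{*})=0$ while $\ell_v^*\neq0$), so one must restrict from $\O_v(m)$ down to $\SS_v(m)$. That $\tau_{\vac}$ restricts to a faithful trace on $\SS_v(m)$ follows from Proposition \ref{prop:main-Cstar} applied to the edgeless graph on $2m^{2|F(v)|}$ vertices; faithfulness is then preserved by minimal tensor products of tracial states. A secondary bookkeeping issue is propagating \emph{both} norm and trace convergence through every inductive step to feed back into CGP — but since CGP's conclusion also delivers both, this is automatic.
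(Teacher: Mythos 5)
Your proposal is correct and follows essentially the same route as the paper: iterate Collins--Guionnet--Parraud over the vertices with $\K^{(i)}$ growing fast enough that the dimensional constraint $M=o(N^{1/3})$ holds at each step, secure faithfulness of the limiting state by restricting the vacuum state to the semicircular subalgebra $\SS_{v}(m)\subset\O_{v}(m)$ (a special case of Proposition \ref{prop:main-Cstar}) together with the fact that tensor products of faithful traces on minimal tensor products are faithful, and finally intersect the almost sure events over $m\in\N$. The one genuine point of divergence is the handling of coefficients in $\End((\C^{m})^{\otimes F})$: the paper first proves the scalar-coefficient convergence (statement \textbf{\emph{S}}) and then upgrades to matrix coefficients by invoking Male's result \cite[Prop. 7.3]{Male}, whereas you carry the (constant) matrix units along as part of the $Y^{(M)}$-family in every application of Theorem \ref{thm:CGP}. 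This is legitimate --- constant deterministic matrices trivially satisfy the almost sure norm/trace convergence hypothesis, constant or slowly growing $M$ is allowed by $M=o(N^{1/3})$, and the matrix trace is a faithful trace --- so your route trades the citation of Male for slightly heavier bookkeeping in the induction; both choices of $\K^{(i)}$ growth (your $\bigl(\prod_{j\le k}\K^{(i)}(v_{j})\bigr)^{10}$ versus the paper's $\K^{(i)}(v_{k})=\K^{(i)}(v_{k-1})^{\delta}$ with $\delta>4$) serve the same purpose. One small correction: your witness for non-faithfulness of $\tau_{\vac}$ on $\O_{v}(m)$ is wrong, since $\ell_{v}^{*}(\ell_{v}^{*})^{*}=\ell_{v}^{*}\ell_{v}=1$ has vacuum expectation $1$; the correct witness is the nonzero positive element $\ell_{v}\ell_{v}^{*}$, for which $\tau_{\vac}(\ell_{v}\ell_{v}^{*})=\|\ell_{v}^{*}\Omega\|^{2}=0$. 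The slip is harmless, as the substantive point --- that one must pass to $\SS_{v}(m)$, where the restricted state is a faithful trace --- is exactly what the paper uses.
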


\begin{proof}
To be concrete, let $v_{1},v_{2},\ldots,v_{n}$ be some ordering of
$\V$ and for some $\delta>4$ let
\[
\K^{(i)}(v_{1})\eqdf i;\quad\K^{(i)}(v_{k})\eqdf\K^{(i)}(v_{k-1})^{\delta}\,\quad2\leq k\leq n.
\]
Note $\delta>4$ implies for $2\leq k\leq n$
\begin{equation}
\delta^{k-1}>\frac{3\delta^{k-1}}{(\delta-1)}\geq3(1+\delta+\cdots+\delta^{k-2}).\label{eq:induction}
\end{equation}
For each choice of $\pm$ let 
\[
s_{IJ}^{\pm}\eqdf x_{IJ}^{v\pm}+[x_{IJ}^{v\pm}]^{*}.
\]
Firstly, for any fixed n.c. polynomial $p$ and any fixed $m\in\N$,
from (\ref{eq:X-def}) and (\ref{eq:L-def}) there is a n.c. polynomial
$q=q(\Gamma,p,m)$ with coefficients in $\End((C^{m})^{\otimes F})$
such that 
\begin{align}
 & p(X_{v}^{(m,\K^{(i)})}\,:\,v\in V)\nonumber \\
= & q(X_{IJ}^{v}\otimes\id_{\C_{V\backslash\{v\}}^{\K^{(i)}}},Y_{IJ}^{v}\otimes\id_{\C_{V\backslash\{v\}}^{\K^{(i)}}}),\label{eq:obs1}\\
 & p(L_{v}^{(m)}+L_{v}^{(m)*}\,:\,v\in V)\nonumber \\
= & q(s_{IJ}^{+}\otimes\id_{\bigotimes_{w\neq v}\O_{w}(m)},s_{IJ}^{-}\otimes\id_{\bigotimes_{w\neq v}\O_{w}(m)}).\label{eq:obs2}
\end{align}
The inputs to $q$ above run over $v\in\V$ and $I,J\in[m]^{F(v)}$. 

We make the following observation. For any $W\subset V$, the $C^{*}$-algebra
$(A_{W},\tau_{\vac})$ generated by $s_{IJ}^{+}\otimes\id_{\bigotimes_{w\in W\backslash\{v\}}\O_{w}(m)},s_{IJ}^{-}\otimes\id_{\bigotimes_{w\in W\backslash\{v\}}\O_{w}(m)}$
as $v\in W$ and $I,J\in[m]^{F(v)}$ has the form
\[
(A_{W},\tau_{W})\cong\bigotimes_{v\in W}^{\min}(S_{v},\tau_{v})
\]
 where ($S_{v},\tau_{v})$ is the $C^{*}$-probability space generated
by $2m^{2F(v)}$ free semicircular variables. Each $\tau_{v}$ is
well-known to be faithful (this is a special case of Proposition \ref{prop:main-Cstar}).
Hence by \cite[\S\S 2.3, Prop.]{Avitzour}, $\tau_{W}$ is faithful
on $A_{W}$ (this is also a special case of Proposition \ref{prop:main-Cstar}).

This observation means that one can iterate Theorem \ref{thm:CGP}
--- using (\ref{eq:induction}) --- to obtain:

\textbf{\emph{S: }}\emph{For any fixed $m\in\N$, a.s. for any n.c.
polynomial $q$ with coefficients in $\C$, 
\begin{align*}
 & \lim_{i\to\infty}\|q(X_{IJ}^{v}\otimes\id_{\C_{V\backslash\{v\}}^{\K^{(i)}}},Y_{IJ}^{v}\otimes\id_{\C_{V\backslash\{v\}}^{\K^{(i)}}})\|\\
= & \|q(s_{IJ}^{+}\otimes\id_{\bigotimes_{w\neq v}\O_{w}(m)},s_{IJ}^{-}\otimes\id_{\bigotimes_{w\neq v}\O_{w}(m)})\|,\\
 & \lim_{i\to\infty}\tr\left(q(X_{IJ}^{v}\otimes\id_{\C_{V\backslash\{v\}}^{\K^{(i)}}},Y_{IJ}^{v}\otimes\id_{\C_{V\backslash\{v\}}^{\K^{(i)}}})\right)\\
= & \tau_{_{V}}\left(q(s_{IJ}^{+}\otimes\id_{\bigotimes_{w\neq v}\O_{w}(m)},s_{IJ}^{-}\otimes\id_{\bigotimes_{w\neq v}\O_{w}(m)})\right).
\end{align*}
}Now by a result of Male \cite[Prop. 7.3]{Male}, for $m\in\N$ a.s.
the analogous convergence (mutatis mutandis) holds for any $q$ with
coefficients in $\End((C^{m})^{\otimes F})$. Hence by the observation
around (\ref{eq:obs1}) and (\ref{eq:obs2}), taking the intersection
of these a.s. events over $m\in\N$ gives the result.
\end{proof}
\begin{rem}
In the previous proof, all $\K^{(i)}$ can likely be taken the same
by adapting the results of Belinschi and Capitaine \cite{belinschi2022strong}
to an arbitrary number of tensor products --- only two are dealt
with in \emph{(ibid.)} owing to the intended application therein to
the Peterson-Thom conjecture.
\end{rem}

\section{Second strong limit\label{sec:Second-strong-limit}}

The main result of $\S$\ref{sec:Second-strong-limit} is the following.
\begin{thm}
\label{thm:strong_convergence_technical_theorem}For any complex valued
n.c. $*$-polynomial $p$ in $|V|$ variables and their conjugates,
\[
\lim_{m\to\infty}\|p(L_{v}^{(m)},(L_{v}^{(m)})^{*}:v\in V)\|_{(\C^{m}){}^{\otimes F}\otimes\O^{V}(m)}=\|p(\ell_{v},\ell_{v}^{*}:v\in V)\|_{\O_{\Gamma}}.
\]
\end{thm}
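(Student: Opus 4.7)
The strategy is to verify that the $L_v^{(m)}$ satisfy, exactly or asymptotically, the hypotheses of the Crisp--Laca universality theorem (Theorem \ref{thm:universailty}), then transport the result to the limit via an ultraproduct. Concretely, fix any free ultrafilter $\F$ on $\N$ and form the ultraproduct $C^*$-algebra $\A_\F \eqdf \prod_{m \to \F} (\C^m)^{\otimes F} \otimes \O^V(m)$ (uniform boundedness of $\|L_v^{(m)}\|$ follows from the isometry property below). The sequences $(L_v^{(m)})_m$ descend to elements $\L_v \in \A_\F$. I plan to show that the $\L_v$ are isometries satisfying the relations \textbf{T1}, \textbf{T2}, \textbf{T3} of Theorem \ref{thm:universailty}, so that the $C^*$-subalgebra $C^*(\L_v)$ of $\A_\F$ is $*$-isomorphic to $\O_\Gamma$ under $\ell_v \mapsto \L_v$. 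Such an isomorphism is isometric on $*$-polynomials, giving
\[
\|p(\ell_v, \ell_v^*)\|_{\O_\Gamma} \;=\; \lim_{m \to \F} \|p(L_v^{(m)}, (L_v^{(m)})^*)\|.
\]
Independence of the right side on $\F$ upgrades the ultralimit to an ordinary limit, yielding the theorem.

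The isometry property, T1, and T3 each hold for every finite $m$ and pass routinely to the ultralimit. For the \textbf{isometry}, the relations $[x_{IJ}^{v\pm}]^*\,x_{I'J'}^{v\pm'} = \delta_{(I,J,\pm),(I',J',\pm')}$ in the Cuntz--Toeplitz algebra $\O_v(m)$ collapse $(L_v^{(m)})^* L_v^{(m)}$ to $(4m^{|F(v)|})^{-1}\, \id \otimes \sum_{I,J}\bigl((r_{IJ}^+)^2 + (r_{IJ}^-)^2\bigr) \otimes \id$, and a direct matrix-unit calculation (using $\e_{IJ}\e_{JI} = \e_{II}$) shows $\sum_{I,J}\bigl((r_{IJ}^+)^2 + (r_{IJ}^-)^2\bigr) = 4m^{|F(v)|}\id$. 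For \textbf{T1}, when $v \sim w$ the edge $\{v,w\} \notin F$ forces $F(v) \cap F(w) = \emptyset$, and the operator-valued factors of $L_v^{(m)}$ and $L_w^{(m)}$ live on disjoint tensor factors (both in the $(\C^m)^{\otimes F}$ side and in the $\O_v(m)$ vs.~$\O_w(m)$ side), so they commute without any analytic input. For \textbf{T3}, let $\Omega^V \eqdf \bigotimes_v \Omega_v$ be the tensor of vacuum vectors in the Fock representations of the $\O_v(m)$; since each $(L_{w_i}^{(m)})^*$ is a sum of terms each containing an annihilation operator applied to $\Omega_{w_i}$, it kills $\xi \otimes \Omega^V$ for every unit vector $\xi \in (\C^m)^{\otimes F}$. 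Hence $\prod_i\bigl(1 - L_{w_i}^{(m)}(L_{w_i}^{(m)})^*\bigr)$ fixes this vector for every $m$, so the corresponding element of $\A_\F$ is nonzero.

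The main obstacle is \textbf{T2}: for non-adjacent $v \neq w$, I must show $\L_v^* \L_w = 0$, equivalently $\lim_{m \to \infty} \|(L_v^{(m)})^* L_w^{(m)}\| = 0$. Since $\{v,w\} \in F$, the channel $\C^m$ indexed by $\{v,w\}$ lies in both $F(v)$ and $F(w)$, so this product is genuinely nontrivial at each $m$. To extract decay I use amplification: fix a large integer $p$ and bound
\[
\bigl\| \bigl[(L_v^{(m)})^* L_w^{(m)} (L_w^{(m)})^* L_v^{(m)}\bigr]^p \bigr\|.
\]
Expanding as a sum over $(I_k, J_k) \in [m]^{F(v)}$, $(I'_k, J'_k) \in [m]^{F(w)}$ and signs, the free creation/annihilation pairs from $\O_v(m)$ and $\O_w(m)$ must match under the Cuntz--Toeplitz relations, imposing strong coincidences on indices. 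The combinatorial heart is to track how the matrix units $r_{IJ}^\pm$ act on the shared $\{v,w\}$-channel versus the private channels in $F(v) \triangle F(w)$; the shared-channel structure saves enough powers of $m$ relative to the trivial sum over un-paired indices to give a bound of the form $C^p m^{-c p}$ for some absolute $c > 0$. Taking the $p$-th root and then $m \to \infty$ produces the claim. This is the essential content of Proposition \ref{prop:key}, and is the place where the paper's combinatorics genuinely goes beyond the free-group case. Once T2 is secured, Theorem \ref{thm:universailty} applies in $\A_\F$ and the ultrafilter argument above finishes the proof.
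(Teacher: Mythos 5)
Your proposal is correct and follows essentially the same route as the paper: form the ultraproduct along a free ultrafilter, verify that the classes of the $L_v^{(m)}$ are isometries satisfying \textbf{T1}--\textbf{T3} (with the isometry computation, the disjoint-channel argument for \textbf{T1}, the vacuum-vector argument for \textbf{T3}, and the amplified combinatorial bound of Proposition \ref{prop:key} for \textbf{T2}), apply the Crisp--Laca universality theorem, and use independence of the ultrafilter to upgrade the ultralimit to an ordinary limit. The only component not carried out in full is the index-counting estimate behind \textbf{T2}, but your sketch matches the paper's argument in Proposition \ref{prop:key}.
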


\subsection{Isometries}
\begin{lem}
\label{lem:isometry}For $v\in\V$, $(L_{v}^{(m)})^{*}L_{v}^{(m)}=1$.
\end{lem}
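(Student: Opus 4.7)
The plan is to exploit the tensor-product structure of $L_v^{(m)}$ to reduce the question to a computation in $\End((\C^m)^{\otimes F(v)}) \otimes \O_v(m)$. Since the outer identity factors $\id_{(\C^m)^{\otimes F \setminus F(v)}}$ and $\id_{\bigotimes_{w \neq v} \O_w(m)}$ are self-adjoint idempotents of norm one and commute with everything else in sight, they contribute only identity to $(L_v^{(m)})^* L_v^{(m)}$. So it suffices to show that the middle factor
\[
A_v \eqdf \sum_{I,J \in [m]^{F(v)}} r_{IJ}^+ \otimes x_{IJ}^{v+} + r_{IJ}^- \otimes x_{IJ}^{v-}
\]
satisfies $A_v^* A_v = 4 m^{|F(v)|} \cdot \id$, so that the scalar prefactor $1/(4 m^{|F(v)|})$ in $(L_v^{(m)})^* L_v^{(m)}$ yields exactly the identity.

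The first key observation is that both $r_{IJ}^+$ and $r_{IJ}^-$ are Hermitian matrices (trivially for $r_{IJ}^+$; and $(r_{IJ}^-)^* = -i(\e_{JI} - \e_{IJ}) = r_{IJ}^-$). Hence expanding $A_v^* A_v$ produces a sum over quadruples $(I, J, \sigma), (K, L, \tau)$ of terms $r_{IJ}^\sigma r_{KL}^\tau \otimes (x_{IJ}^{v\sigma})^* x_{KL}^{v\tau}$. Since the generators $\{x_{IJ}^{v\pm}\}$ of the Cuntz--Toeplitz algebra $\O_v(m)$ are free creation operators indexed by distinct labels $(I, J, \pm)$, the defining relation forces $(x_{IJ}^{v\sigma})^* x_{KL}^{v\tau} = \delta_{\sigma\tau}\delta_{IK}\delta_{JL} \cdot 1$. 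This collapses the quadruple sum to a diagonal sum:
\[
A_v^* A_v = \sum_{I,J \in [m]^{F(v)}} \bigl[(r_{IJ}^+)^2 + (r_{IJ}^-)^2\bigr] \otimes 1.
\]

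What remains is a purely combinatorial identity in $\End((\C^m)^{\otimes F(v)})$. Using the matrix unit rule $\e_{IJ}\e_{KL} = \delta_{JK}\e_{IL}$, one finds that for $I \neq J$ the bracket equals $2(\e_{II} + \e_{JJ})$, while for $I = J$ one has $r_{II}^- = 0$ and $(r_{II}^+)^2 = 4\e_{II}$. Writing $d = m^{|F(v)|}$ and collecting the contributions to each diagonal matrix unit $\e_{I_0 I_0}$, each receives $2 \cdot 2(d-1)$ from off-diagonal pairs (once as the first index, once as the second) plus $4$ from the diagonal pair $I = J = I_0$, totaling $4d$. Hence $A_v^* A_v = 4d \cdot \id_{(\C^m)^{\otimes F(v)}}$, which completes the proof. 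The only genuinely delicate bookkeeping is this double counting of ordered pairs versus the exceptional $I = J$ case, but nothing more substantial is needed.
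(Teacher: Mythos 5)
Your proposal is correct and follows essentially the same route as the paper: collapse the quadruple sum via the Cuntz--Toeplitz relation $(x_{IJ}^{v\sigma})^*x_{KL}^{v\tau}=\delta_{\sigma\tau}\delta_{IK}\delta_{JL}$, then evaluate $\sum_{I,J}\bigl[(r_{IJ}^{+})^{2}+(r_{IJ}^{-})^{2}\bigr]=4m^{|F(v)|}\,\id$. The only cosmetic difference is that you split off the $I=J$ case, whereas the paper's identity $(r_{IJ}^{+})^{2}+(r_{IJ}^{-})^{2}=2(\e_{II}+\e_{JJ})$ already holds uniformly (giving $4\e_{II}$ when $I=J$), so no case distinction is needed.
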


\begin{proof}
We have
\begin{align}
(L_{v}^{(m)})^{*}L_{v}^{(m)} & =\frac{1}{4m^{F(v)}}\id_{(\C^{m}){}^{\otimes F\backslash F(v)}}\otimes\id_{\bigotimes_{w\neq v}\O_{w}(m)}\otimes\nonumber \\
 & \sum_{\substack{I,J,K,L\in[m]^{F(v)}}
}\text{\ensuremath{\left(r_{IJ}^{+}\otimes[x_{IJ}^{v+}]^{*}+r_{IJ}^{-}\otimes[x_{IJ}^{v-}]^{*}\right)\left(r_{KL}^{+}\otimes[x_{KL}^{v+}]+r_{KL}^{-}\otimes[x_{KL}^{v-}]\right)}}\nonumber \\
 & =\frac{1}{4m^{F(v)}}\id_{(\C^{m}){}^{\otimes F\backslash F(v)}}\otimes\id_{\O_{V}(m)}\otimes\sum_{\substack{I,J\in[m]^{F(v)}}
}r_{IJ}^{+}r_{IJ}^{+}+r_{IJ}^{-}r_{IJ}^{-}.\label{eq:temp1}
\end{align}
We calculate
\begin{align*}
r_{IJ}^{+}r_{IJ}^{+}+r_{IJ}^{-}r_{IJ}^{-} & =\left(\e_{IJ}+\e_{JI}\right)\left(\e_{IJ}+\e_{JI}\right)-(\e_{IJ}-\e_{JI})(\e_{IJ}-\e_{JI})\\
 & =2\e_{JI}\e_{IJ}+2\e_{IJ}\e_{JI}=2(\e_{II}+\e_{JJ}).
\end{align*}
Hence
\begin{equation}
\sum_{\substack{I,J\in[m]^{F(v)}}
}r_{IJ}^{+}r_{IJ}^{+}+r_{IJ}^{-}r_{IJ}^{-}=4m^{F(v)}\id_{(\C^{m})^{\otimes F(v)}}.\label{eq:temp2}
\end{equation}
Combining (\ref{eq:temp1}) and (\ref{eq:temp2}) proves the lemma.
\end{proof}

\subsection{T2: A key proposition\label{subsec:T2:-A-key}}

The goal of this $\S\S$\ref{subsec:T2:-A-key} is to prove the following
proposition.
\begin{prop}[Key]
\label{prop:key}For all non-adjacent vertices $v\not\sim w$ in
$\V$,
\[
\lim_{m\to\infty}\|(L_{v}^{(m)})^{*}L_{w}^{(m)}\|_{(\C^{m}){}^{\otimes F}\otimes\O^{V}(m)}=0.
\]
\end{prop}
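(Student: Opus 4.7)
The strategy, as flagged in the Overview of the Proof, is to pass to a high power and show $\|M^p\| = o_{m \to \infty}(1)$ for some fixed sufficiently large $p$, where $M \eqdf (L_v^{(m)})^* L_w^{(m)} (L_w^{(m)})^* L_v^{(m)}$. Since $M = AA^*$ for $A \eqdf (L_v^{(m)})^* L_w^{(m)}$, the operator $M$ is positive self-adjoint and $\|M^p\| = \|M\|^p = \|A\|^{2p}$, so any such bound yields the claim. To attack the norm I would realize $\O^V(m)$ concretely via its faithful vacuum-state GNS representation on the tensor-product Fock space $\H^V = \bigotimes_{u \in V} \H_u$, where $\H_u$ is the Cuntz-Toeplitz Fock space of $\O_u(m)$.

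Inspection of (\ref{eq:L-def}) shows that $M$ acts as the identity on each $\H_u$ for $u \neq v, w$, and preserves the bigrading of $\H_v \otimes \H_w$ by the number of creations in each factor. Using $(x_{KL}^{w, \eta})^* \Omega_w = 0$, one immediately sees that $M$ annihilates the bidegree-$(a, 0)$ subspaces. For $b \geq 1$, a direct expansion of $M$ using (\ref{eq:L-def}) and the identity $\sum_{I, J \in [m]^{F(v)}, \epsilon} r_{IJ}^{v, \epsilon} B\, r_{IJ}^{v, \epsilon} = 4 B^{T_v}$ (partial transpose on the $(\C^m)^{\otimes F(v)}$ factor), applied to the inner product $L_v L_v^*$ sitting inside $M$, shows that $M$ restricted to bidegree $(a, b)$ acts as $M'' \otimes \id_{\H_v^{(a)}} \otimes \id_{\H_w^{(b-1)}}$ for a single finite-dimensional operator $M''$ on the space $(\C^m)^{\otimes F} \otimes \H_w^{(1)}$ of dimension $2m^{|F| + 2|F(w)|}$ that is independent of $(a, b)$. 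Consequently $\|M\|$ equals $\|M''\|$ and the problem is reduced to a finite-dimensional estimate.

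To bound $\|M''\|$ I would apply the finite-dimensional positive-operator trace inequality $\|(M''^* M'')^p\| \leq \Tr\bigl((M''^* M'')^p\bigr)$ and then expand the right-hand side combinatorially, tracking the sums over $\{+, -\} \times [m]^{F(v)} \times [m]^{F(v)}$ and $\{+, -\} \times [m]^{F(w)} \times [m]^{F(w)}$ arising at each of the $4p$ factors of $L^{(m)}$'s. The normalizations in (\ref{eq:L-def}) contribute an overall prefactor of $(16\, m^{|F(v)|+|F(w)|})^{-p}$; the rigid index-matching constraints imposed by the residual Cuntz relations and the partial-transpose structure force the effective count of surviving terms to remain far below this decay; together with the polynomial dimension factor $2m^{|F|+2|F(w)|}$ from the trace bound, for $p$ large enough (depending on $\Gamma$) the net bound is $O(m^{-\alpha})$ for some $\alpha = \alpha(\Gamma) > 0$. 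I expect the main obstacle to be precisely this combinatorial bookkeeping: even after the structural reduction to $M''$, tracking how partial transposes interact with the $4p$-fold product of index sums is delicate, and the role of the hypothesis $v \not\sim w$ must be extracted from the fact that $\{v, w\}$ is the \emph{only} channel in $F(v) \cap F(w)$, so that matching through the shared channel cannot cancel all of the $m$-suppression.
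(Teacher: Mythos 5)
Your overall strategy is the same as the paper's: amplify to a fixed large power, expand via (\ref{eq:L-def}), and count which index assignments survive, with the hypothesis $v\not\sim w$ entering only through $|F(v)\cap F(w)|=1$. Your preliminary reduction is also essentially sound: $M\eqdf(L_{v}^{(m)})^{*}L_{w}^{(m)}(L_{w}^{(m)})^{*}L_{v}^{(m)}$ involves no $v$- or $u$-Fock operators after the Cuntz--Toeplitz cancellation, preserves the bidegree, kills bidegree $(a,0)$, and on bidegree $(a,b)$, $b\geq1$, acts as a fixed finite-dimensional block tensor identity, so $\|M\|=\|M''\|$ with $\dim$ of the ambient space $2m^{|F|+2|F(w)|}$; the trace bound then plays the same role as the paper's $m^{6|F|}$ overhead (an exponent independent of $p$). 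But there are two genuine problems. First, the one algebraic identity you supply to organize the computation is wrong: summing over the $v$-channel gives
\[
\sum_{I,J\in[m]^{F(v)},\,\epsilon\in\{+,-\}}r_{IJ}^{\epsilon}\,B\,r_{IJ}^{\epsilon}=4\,\bigl(\Tr_{(\C^{m})^{\otimes F(v)}}B\bigr)\otimes\id_{(\C^{m})^{\otimes F(v)}},
\]
a partial \emph{trace}, not $4B^{T_{v}}$: in $(\e_{IJ}+\e_{JI})B(\e_{IJ}+\e_{JI})-(\e_{IJ}-\e_{JI})B(\e_{IJ}-\e_{JI})$ the same-index terms $\e_{IJ}B\e_{IJ}$, which are what would produce a transpose, cancel, and only $2(\e_{IJ}B\e_{JI}+\e_{JI}B\e_{IJ})$ survives. (Also, this sum is wrapped around the middle factor $L_{w}^{(m)}L_{w}^{(m)*}$, coming from the outer $L_{v}^{(m)*}\cdots L_{v}^{(m)}$ together with $[x_{IJ}^{v\epsilon}]^{*}x_{KL}^{v\eta}=\delta$; there is no $L_{v}L_{v}^{*}$ inside $M$ itself.) This is not a cosmetic slip: the partial-trace form is exactly the mechanism that produces the gain, since tracing out the $F(v)$ channels yields a factor $m^{|F(v)|-1}$ against the normalization $m^{-|F(v)|}$, leaving $m^{-1}$ precisely because the non-edge $\{v,w\}$ is the only shared channel. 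Carrying a ``partial-transpose structure'' into the bookkeeping would send the count in the wrong direction.

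Second, and more importantly, the heart of the proof is asserted rather than carried out. The statement that ``the rigid index-matching constraints \ldots{} force the effective count of surviving terms to remain far below this decay'' is exactly what has to be proved, and it is where $v\not\sim w$ is actually used. In the paper this is the analysis following (\ref{eq:term-first-expand}): the Cuntz relations force $I_{i}=K_{i}$, $J_{i}=L_{i}$, $C_{i}=A_{i+1}$, $D_{i}=B_{i+1}$ and sign matchings, and the number of free indices is bounded by the number of connected components of the constraint diagram (Figure \ref{fig:indices}), giving at most $m^{6|F|}\,m^{p(|F(v)|+|F(w)|-1)}$ surviving assignments against the normalization $m^{-p(|F(v)|+|F(w)|)}$, whence $\|L_{v}^{*}L_{w}L_{w}^{*}L_{v}\|\leq4^{1/p}m^{6|F|/p}\,m^{-1}$ and the proposition follows. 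Your framework (bidegree reduction, then $\|M''\|^{2p}\leq\Tr(M''^{2p})$ with the polynomial dimension factor absorbed by large $p$) can certainly be completed along these lines, but as written the decisive combinatorial estimate is missing, and the corrected partial-trace identity, not a transpose, is what you would need to run it.
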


\begin{proof}
In the following, we view $\End((\C^{m}))^{\otimes F(v)})$ as a subalgebra
of $\End((\C^{m}))^{\otimes F})$ by tensoring with identity (this
makes notation less cumbersome), and similarly for $\End((\C^{m}))^{\otimes F(w)})$.
In the same spirit, for $v\in V$ and $I,J\in[m]^{F}$ we write 
\[
x_{IJ}^{v\pm}\eqdf x_{IJ}^{v\pm}\otimes\id_{\bigotimes_{w\neq v}\O_{w}(m)}.
\]

When we expand $[L_{v}^{*}L_{w}L_{w}^{*}L_{v}]^{p}$ using (\ref{eq:L-def})
we get a sum over
\begin{equation}
\prod_{i=0}^{p-1}r_{I_{i}J_{i}}^{\pm(4i)}\otimes[x_{I_{i}J_{i}}^{v\pm(4i)}]^{*}r_{A_{i}B_{i}}^{\pm(4i+1)}\otimes x_{A_{i}B_{i}}^{w\pm(4i+1)}r_{C_{i}D_{i}}^{\pm(4i+2)}\otimes[x_{C_{i}D_{i}}^{w\pm(4i+2)}]^{*}r_{K_{i}L_{i}}^{\pm(4i+3)}\otimes x_{K_{i}L_{i}}^{v\pm(4i+3)}\label{eq:term-first-expand}
\end{equation}
where $\pm(k)$ refers to a choice of sign $\pm$ depending on $k$.
In this expansion, 
\[
I_{i},J_{i},K_{i},L_{i}\in[m]^{F(v)},\quad A_{i},B_{i},C_{i},D_{i}\in[m]^{F(w)}.
\]

Firstly, for (\ref{eq:term-first-expand}) to be non-zero we must
have
\begin{align*}
I_{i} & =K_{i},\,J_{i}=L_{i}\\
\pm(4i) & =\pm(4i+3)\\
C_{i} & =A_{i+1},\,D_{i}=B_{i+1}\\
\pm(4i+2) & =\pm(4i+5),
\end{align*}
and when these hold (\ref{eq:term-first-expand}) is equal to
\begin{equation}
r_{I_{0}J_{0}}^{\pm(0)}r_{A_{0}B_{0}}^{\pm(1)}r_{C_{0}D_{0}}^{\pm(2)}r_{K_{0}L_{0}}^{\pm(0)}\prod_{i=1}^{p-1}r_{I_{i}J_{i}}^{\pm(4i)}r_{A_{i}B_{i}}^{\pm(4i-2)}r_{C_{i}D_{i}}^{\pm(4i+2)}r_{K_{i}L_{i}}^{\pm(4i)}\otimes x_{A_{0}B_{0}}^{w\pm(1)}[x_{C_{p-1}D_{p-1}}^{w\pm(4p-2)}]^{*}.\label{eq:term}
\end{equation}
When we expand the whole thing out we replace each $r_{IJ}^{\pm}$
with a $\e_{IJ}$ or $\e_{JI}$ up to a scalar. We will now expand
out all $r^{\pm}$ terms other than the second and the second last.

We get after expanding the first and fourth, and summing over the
choice of $\pm(0)$,
\begin{align*}
 & \left(\e_{I_{0}J_{0}}+\e_{J_{0}I_{0}}\right)r_{A_{0}B_{0}}^{\pm(1)}r_{C_{0}D_{0}}^{\pm(2)}\left(\e_{I_{0}J_{0}}+\e_{J_{0}I_{0}}\right)-(\e_{I_{0}J_{0}}-\e_{J_{0}I_{0}})r_{A_{0}B_{0}}^{\pm(1)}r_{C_{0}D_{0}}^{\pm(2)}(\e_{I_{0}J_{0}}-\e_{J_{0}I_{0}})\\
= & 2\e_{I_{0}J_{0}}r_{A_{0}B_{0}}^{\pm(1)}r_{C_{0}D_{0}}^{\pm(2)}\e_{J_{0}I_{0}}+2\e_{J_{0}I_{0}}r_{A_{0}B_{0}}^{\pm(1)}r_{C_{0}D_{0}}^{\pm(2)}\e_{I_{0}J_{0}}
\end{align*}
So if 
\[
X\eqdf\prod_{i=1}^{p-1}r_{I_{i}J_{i}}^{\pm(4i)}r_{A_{i}B_{i}}^{\pm(4i-2)}r_{C_{i}D_{i}}^{\pm(4i+2)}r_{K_{i}L_{i}}^{\pm(4i)}\otimes x_{A_{0}B_{0}}^{w\pm(1)}[x_{C_{p-1}D_{p-1}}^{w\pm(4p-2)}]^{*}
\]
then
\[
\sum_{I_{0},J_{0}\in[m]^{F(v)},\pm(0)}\eqref{eq:term}=4\sum_{I_{0},J_{0}\in[m]^{F(v)}}\e_{I_{0}J_{0}}r_{A_{0}B_{0}}^{\pm(1)}r_{C_{0}D_{0}}^{\pm(2)}\e_{J_{0}I_{0}}X.
\]
Now repeating this argument and putting back in scalar factors we
get
\begin{align}
[L_{v}^{*}L_{w}L_{w}^{*}L_{v}]^{p}=\frac{1}{m^{p|F(v)|}m^{p|F(w)|}2^{4p}}\sum_{\substack{I_{0},J_{0},A_{0},B_{0},C_{0},D_{0},I_{1},J_{1}\ldots,\\
\pm(0),\ldots,\pm(4p-1)
}
}\eqref{eq:term} & =\nonumber \\
\frac{1}{m^{p|F(v)|}m^{p|F(w)|}}\frac{1}{4}\sum_{\substack{I_{0},J_{0},A_{0},B_{0},C_{0},D_{0},I_{1},J_{1},C_{1},D_{1},I_{2},\ldots\\
\pm(1),\pm(4p-2)
}
}\nonumber \\
\e_{I_{0}J_{0}}r_{A_{0}B_{0}}^{\pm(1)}\e_{C_{0}D_{0}}\e_{J_{0}I_{0}}\e_{I_{1}J_{1}}\e_{D_{0}C_{0}}\e_{C_{1}D_{1}}\e_{J_{1}I_{1}}\cdots\nonumber \\
\cdots\e_{I_{p-1}J_{p-1}}\e_{D_{p-2}C_{p-2}}r_{C_{p-1}D_{p-1}}^{\pm(4p-2)}\e_{J_{p-1}I_{p-1}}\otimes x_{A_{0}B_{0}}^{w\pm(1)}[x_{C_{p-1}D_{p-1}}^{w\pm(4p-2)}]^{*}.\label{eq:big-term}
\end{align}

\begin{figure}
\includegraphics[scale=0.2]{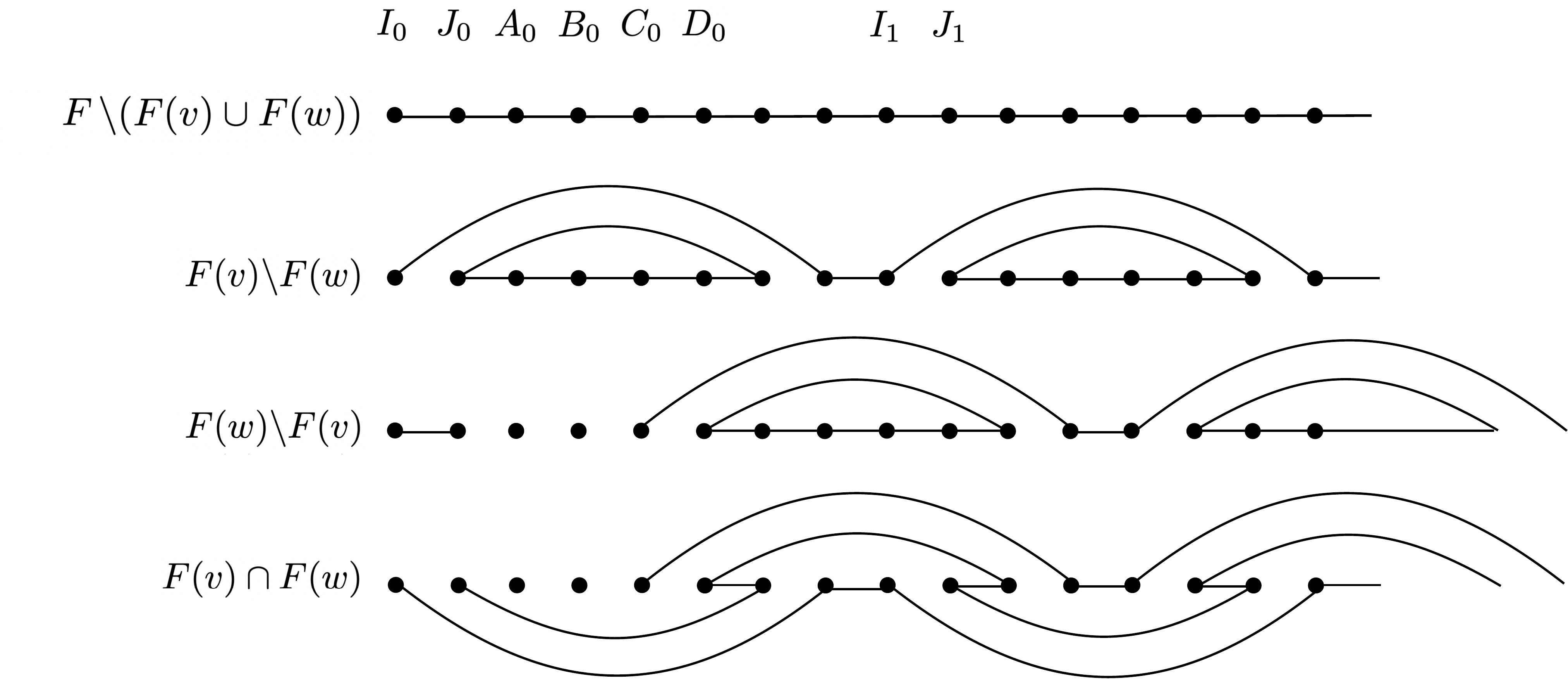}\caption{\label{fig:indices}Constraints on indices in (\ref{eq:big-term})
to yield a non-zero summand. }
\end{figure}

Figure \ref{fig:indices} shows diagrammatically the constraints on
indices of (\ref{eq:big-term}) that are required to make the summand
non-zero. Two indices are joined in the diagram if the corresponding
components of the index must be equal for a non-zero summand for a
given edge, when the edge lives in the set described on the left hand
side of the image. Recall that $I_{i},J_{i},K_{i},L_{i}\in[m]^{F(v)}$
and $A_{i},B_{i},C_{i},D_{i}\in[m]^{F(w)}$ and the corresponding
$\varepsilon$ matrices are extending to the identity on the components
corresponding to edges in $F\setminus F(v)$ and $F\setminus F(w)$
respectively, thus there is no choice to be made for the indices corresponding
to such edges, and so we connect these indices in the diagram to an
existing connected component without joining two existing components.
As such, the number of entries across all indicies that may be freely
chosen is bounded by the number of connected components in the diagram.
There are at most
\[
(6+p)(|F(v)\backslash F(w)|+|F(w)\backslash F(v)|)+6
\]
 connected components and so there are in total at most 
\[
m^{(6+p)(|F(v)\backslash F(w)|+|F(w)\backslash F(v)|)+6}\leq m^{6|F|}m^{p(|F(v)|+|F(w)|-1)}
\]
choices for the indices giving non zero terms since $|F(v)\cap F(w)|=1$
as $v\not\sim w$. There are 4 remaining choices of $\pm(1),\pm(4p-2)$.
Each summand has operator norm $\leq4$ since $\|\e_{IJ}\|=1$, $\|r_{IJ}^{\pm}\|\leq2$,
and $\|x_{A_{0}B_{0}}^{w\pm(1)}\|=\|[x_{C_{p-1}D_{p-1}}^{w\pm(4p-2)}]^{*}\|=1$.

Hence by the triangle inequality
\[
\|\left(L_{v}^{*}L_{w}L_{w}^{*}L_{v}\right)^{p}\|\leq\frac{1}{m^{p|F(v)|}m^{p|F(w)|}}4m^{6|F|}m^{p(|F(v)|+|F(w)|-1)}
\]
and so
\begin{align*}
\|L_{v}^{*}L_{w}L_{w}^{*}L_{v}\| & =\text{\ensuremath{\|\left(L_{v}^{*}L_{w}L_{w}^{*}L_{v}\right)^{p}\|^{\frac{1}{p}}}}\\
 & \leq\ensuremath{\frac{1}{m^{|F(v)|}m^{|F(w)|}}}\ensuremath{\left(4^{1/p}m^{6|F|/p}m^{|F(v)|+|F(w)|-1}\right)}\\
 & \text{\ensuremath{=\frac{1}{m}}}4^{1/p}m^{6|F|/p}.
\end{align*}
Since this holds for any $p\in\N$ it must hold that $\|L_{v}^{*}L_{w}L_{w}^{*}L_{v}\|\leq\frac{1}{m}.$
In any case it tends to zero as $m\to\infty$.
\end{proof}

\subsection{T3: Non-degeneracy }
\begin{lem}
\label{lem:T3}For any $m\in\N$ and $w_{1},\ldots,.w_{k}\in\V$,
$\prod_{i=1}^{k}\left(1-L_{w_{i}}^{(m)}L_{w_{i}}^{(m)*}\right)$ has
a fixed vector.
\end{lem}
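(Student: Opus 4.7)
The plan is to exhibit an explicit fixed vector using the vacuum vector of the Fock representation of the Cuntz--Toeplitz algebras. The whole thing reduces to the observation that the $[x_{IJ}^{v\pm}]^{*}$ appearing in $L_{v}^{(m)*}$ are \emph{annihilation} operators, hence kill the vacuum.

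More concretely, I would first fix a faithful representation of $\End((\C^{m})^{\otimes F})\otimes\O^{V}(m)$ on the Hilbert space
\[
\H^{(m)}\eqdf (\C^{m})^{\otimes F}\otimes\bigotimes_{v\in V}\H_{2m^{2|F(v)|}},
\]
where each $\H_{2m^{2|F(v)|}}$ is the Boltzmann--Fock space on which $\O_{v}(m)$ acts as in Remark \ref{rem:CT-algebra}, with vacuum vector $\Omega_{v}$; write $\Omega\eqdf\bigotimes_{v\in V}\Omega_{v}$ for the total vacuum.

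Next I would observe that in this representation, for any $v\in V$, any $I,J\in [m]^{F(v)}$, and either choice of sign, $[x_{IJ}^{v\pm}]^{*}\Omega_{v}=0$ since $x_{IJ}^{v\pm}$ is a creation operator on $\H_{2m^{2|F(v)|}}$. Using the expression (\ref{eq:L-def}) for $L_{v}^{(m)*}$, every summand applied to $\xi\otimes\Omega$ (for arbitrary $\xi\in(\C^{m})^{\otimes F}$) contains a factor of the form $[x_{IJ}^{v\pm}]^{*}\Omega_{v}=0$, and so
\[
L_{v}^{(m)*}(\xi\otimes\Omega)=0.
\]

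Therefore $L_{v}^{(m)}L_{v}^{(m)*}(\xi\otimes\Omega)=0$, which gives $(1-L_{v}^{(m)}L_{v}^{(m)*})(\xi\otimes\Omega)=\xi\otimes\Omega$ for every $v\in V$. Applying this successively to each factor $1-L_{w_{i}}^{(m)}L_{w_{i}}^{(m)*}$, we see that $\xi\otimes\Omega$ (which is nonzero as soon as $\xi\neq 0$) is a fixed vector of the product $\prod_{i=1}^{k}(1-L_{w_{i}}^{(m)}L_{w_{i}}^{(m)*})$, as desired. There is no genuine obstacle to this argument; the only thing to be careful about is correctly identifying the ambient Hilbert space and its vacuum vector, after which the lemma follows from the annihilation property in a single line.
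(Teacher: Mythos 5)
Your proposal is correct and is essentially identical to the paper's own proof: both exhibit the fixed vector $e_{I}\otimes\Omega_{v_{1}}\otimes\cdots\otimes\Omega_{v_{n}}$ (a standard basis vector tensored with the total Fock vacuum) and observe that each $L_{w}^{(m)*}$ annihilates it because the $[x_{IJ}^{w\pm}]^{*}$ are annihilation operators. The only difference is that you spell out the ambient Hilbert space representation a bit more explicitly, which the paper leaves implicit.
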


\begin{proof}
Let $\{v_{1},\ldots,v_{n}\}=\V$. Let $I$ be an arbitrary index in
$[m]^{F}$, then define $\xi_{m}=e_{I}\otimes\Omega_{v_{1}}\otimes...\otimes\Omega_{v_{n}}\in(\C^{m})^{\otimes F}\otimes\O^{V}(m)$,
where $\Omega_{v}$ is the vacuum vector of $\O_{v}(m)$. Then $\xi_{m}$
is fixed by $I-L_{w}^{(m)}L_{w}^{(m)*}$ for any $w\in\V$ since $L_{w}^{(m)*}$
annihilates $\xi_{m}$. Hence for any $w_{1},...,w_{k}$ in $\V$,
$\xi_{m}$ is fixed by $\prod_{i=1}^{k}\left(1-L_{w_{i}}^{(m)}L_{w_{i}}^{(m)*}\right)$.
\end{proof}

\subsection{Proof of Theorem \ref{thm:strong_convergence_technical_theorem}\label{subsec:Proof-of-Theorem-main}}
\begin{proof}[Proof of Theorem \ref{thm:strong_convergence_technical_theorem}]
Let $\F$ be an arbitrary free ultrafilter on $\N$. Consider the
ultraproduct $C^{*}$-algebra
\[
\U_{\F}\eqdf\prod_{m\to\F}\End((\C^{m}){}^{\otimes F})\otimes\O^{V}(m).
\]
This is the quotient of the product 
\[
\prod_{m\in\N}\End((\C^{m}){}^{\otimes F})\otimes\O^{V}(m)
\]
by the subspace $\mathcal{N}_{\F}$ of bounded sequences $\{a_{m}\}_{_{m\in\N}}$
that tend to zero along $\F$. The $*$-algebraic operations descend
from the product and the norm is given by 
\[
\|\{a_{m}\}_{_{m\in\N}}\|_{\U_{\F}}=\lim_{m\to F}\|a_{m}\|.
\]
We refer the reader to \cite[Appendix A]{BrownOzawa} for background
on ultraproduct $C^{*}$-algebras.

Consider the elements 
\[
\L_{v}\eqdf\{L_{v}^{(m)}\}_{m\in\N}
\]
 of $\U_{\F}$ and let $B_{\Gamma}$ denote the $C^{*}$-subalgebra
generated by the $\L_{v}$. Since by Lemma \ref{lem:isometry} $(L_{v}^{(m)})^{*}L_{v}^{(m)}=1$,
it follows that 
\[
\L_{v}^{*}\L_{v}=1_{\U_{\F}}
\]
and the $\L_{v}$ are isometries.

\textbf{T1: }It is clear that if $v\sim w$ then $L_{v}^{(m)}$ and
$L_{w}^{(m)}$ commute since the elements 
\[
\id_{(\C^{m}){}^{\otimes F\backslash F(v)}}\otimes r_{IJ}^{\pm},\,\id_{(\C^{m}){}^{\otimes F\backslash F(w)}}\otimes r_{IJ}^{\pm}
\]
 in their respective defining sums commute (only changing indices
in different channels) as do the $x_{IJ}^{v\pm}$ (involving different
vertices). Similarly $L_{v}^{(m)}$ and $L_{w}^{(m)*}$ commute. Therefore
we have when $v\sim w$
\begin{align*}
\L_{v}\L_{w} & =\L_{v}\L_{w},\\
\L_{v}^{*}\L_{w} & =\L_{w}\L_{v}^{*}
\end{align*}
as required.

\textbf{T2: }Proposition \ref{prop:key} shows that if $v$ and $w$
are not adjacent in $\Gamma$, then the sequence $\{(L_{v}^{(m)})^{*}\}_{m\in\N}\{L_{w}^{(m)}\}_{m\in\N}\in\mathcal{\mathcal{N}_{\F}}$
and hence in $\U_{\F}$
\[
\L_{v}^{*}\L_{w}=0.
\]

\textbf{T3: }Via pointwise multiplication, $\U_{\F}$ is naturally
a subalgebra of $\B(\H_{\F})$ where $\H_{\F}$ is the ultraproduct
of the Hilbert spaces $(\C^{m})^{\otimes F}\otimes\O^{V}(m)$ along
$\F$. Since for each fixed $w_{1},\ldots,.w_{k}\in\V$, $\prod_{i=1}^{k}\left(1-L_{w_{i}}^{(m)}L_{w_{i}}^{(m)*}\right)$
has a fixed vector $\xi_{m}$ of norm one, the (class of the) sequence
$\{\xi_{m}\}_{m\in\N}$ in $\H_{\F}$ is a norm one vector fixed by
\[
\prod_{i=1}^{k}\left(1-\L_{w_{i}}\L_{w_{i}}^{*}\right),
\]
and so this operator is non-zero.

\textbf{Synthesis. }Therefore by Theorem \ref{thm:universailty} the
assignment
\[
\ell_{v}\mapsto\L_{v}
\]
 extends to a $*$-algebra isomorphism $\psi:\O_{\Gamma}\to\U_{\F}$.
This implies that for any complex valued n.c. $*$-polynomial $p$
in $|V|$ variables and their conjugates,
\[
\lim_{m\to\F}\|p(L_{v}^{(m)},(L_{v}^{(m)})^{*}:v\in V)\|_{(\C^{m}){}^{\otimes F}\otimes\O^{V}(m)}=\|p(\ell_{v},\ell_{v}^{*}:v\in V)\|_{\O_{\Gamma}}.
\]
Since this holds for any free ultrafilter $\F$ on $\N$, and the
right hand side does not depend on $\F$, it must in fact be the case
that Theorem \ref{thm:strong_convergence_technical_theorem} holds
--- i.e. the convergence above holds as a standard limit.
\end{proof}

\section{Proof of Theorem \ref{thm:strong-converge-raags}}

We first prove the following random matrix result that may be of independent
interest.
\begin{thm}
\label{thm:main-random-matrix}There is a sequence $\{(m(n),\K^{(n)})\}_{n=1}^{\infty}$
such that a.s., for any n.c. polynomial $Q$ in $|V|$ variables,
as $n\to\infty$
\[
\|Q(X_{v}^{(m(n),\K^{(n)})}\,:\,v\in V)\|\to\|Q(s_{v}:v\in V)\|.
\]
The norm on the left is the operator norm w.r.t. the standard Hermitian
norm on $\left(\C^{m(n)}\right)^{\otimes F}\otimes\C^{\otimes\mathbf{K}^{(n)}}$,
and the norm on the right is the operator norm on $\mathcal{S}_{\Gamma}$.
\end{thm}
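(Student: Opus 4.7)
The plan is to combine the two previously established strong limits via a diagonal argument. Theorem~\ref{thm:first-convergence} provides, almost surely and for every fixed $m\in\N$, the convergence
\[
\lim_{i\to\infty}\bigl\|p(X_v^{(m,\K^{(i)})}:v\in V)\bigr\|=\bigl\|p(L_v^{(m)}+L_v^{(m)*}:v\in V)\bigr\|
\]
for any n.c. polynomial $p$ in $|V|$ variables. Theorem~\ref{thm:strong_convergence_technical_theorem}, applied to the polynomial substitution $x_v\mapsto x_v+x_v^*$, yields
\[
\lim_{m\to\infty}\bigl\|p(L_v^{(m)}+L_v^{(m)*}:v\in V)\bigr\|=\bigl\|p(s_v:v\in V)\bigr\|,
\]
since $s_v=\ell_v+\ell_v^*$. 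The task is to interlace these two limits into a single deterministic sequence $(m(n),\K^{(n)})$.

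The main subtlety, and the obstacle I expect to spend most attention on, is that the almost-sure event in Theorem~\ref{thm:first-convergence} does not directly supply a deterministic diagonal sequence along which convergence holds almost surely for all polynomials simultaneously. To get around this, I would pass through convergence in probability. Fix a countable collection $\{Q_k\}_{k=1}^\infty$ of n.c. polynomials in $|V|$ variables with coefficients in $\Q+i\Q$, enumerated so that $Q_k$ has degree at most $k$. Since almost sure convergence implies convergence in probability, for each $n\in\N$ one can choose $i(n)\in\N$ sufficiently large that
\[
P\!\left(\max_{k\leq n}\Bigl|\bigl\|Q_k(X_v^{(n,\K^{(i(n))})}:v\in V)\bigr\|-\bigl\|Q_k(L_v^{(n)}+L_v^{(n)*}:v\in V)\bigr\|\Bigr|\geq\tfrac{1}{n}\right)\leq 2^{-n}.
\]
Setting $m(n)\eqdf n$ and $\K^{(n)}\eqdf\K^{(i(n))}$, the Borel--Cantelli lemma ensures that almost surely the above maximum is less than $1/n$ for all but finitely many $n$. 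Combined with the second strong limit above, this gives, almost surely, for every fixed $k$,
\[
\lim_{n\to\infty}\bigl\|Q_k(X_v^{(m(n),\K^{(n)})}:v\in V)\bigr\|=\bigl\|Q_k(s_v:v\in V)\bigr\|.
\]

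Finally, I would extend the convergence from the countable family $\{Q_k\}$ to an arbitrary n.c. polynomial $Q$ by a density argument. Taking $Q_k$ to be the monomial $x_v$ shows $\|X_v^{(m(n),\K^{(n)})}\|\to\|s_v\|=2$ almost surely, so in particular these operator norms are almost surely uniformly bounded in $n$. For a fixed bound $R$ on the input operator norms and a fixed degree bound $d$, the map sending a n.c. polynomial of degree at most $d$ to the operator norm of its evaluation at any $|V|$-tuple of operators of norm at most $R$ is Lipschitz in the polynomial coefficients with constant depending only on $R$, $d$, and $|V|$. Since $\{Q_k\}$ is dense in the space of polynomials of any fixed degree, approximating an arbitrary $Q$ by a suitable $Q_k$ promotes the countable convergence to the full statement. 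The two strong limits themselves are already proven in the preceding sections, so all that remains is this diagonal-plus-density passage.
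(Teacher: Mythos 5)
Your proposal is correct and follows the same overall route as the paper: diagonally interlacing Theorem \ref{thm:first-convergence} (for fixed $m$, letting the auxiliary dimensions grow) with Theorem \ref{thm:strong_convergence_technical_theorem} (as $m\to\infty$), and then upgrading convergence from a countable family of polynomials to all polynomials via a Lipschitz-in-coefficients argument that needs an almost sure bound on $\|X_v^{(m(n),\K^{(n)})}\|$. The differences are only in the technical devices, and they are legitimate. Where the paper works with finite $\frac{1}{4^{n}}$-nets of the $\ell^{1}$-unit ball of degree-$\le n$ polynomials and must therefore beat Lipschitz constants of size $3^{n}$ with the net resolution, you fix the target polynomial $Q$ first and approximate it by rational-coefficient polynomials of the same degree, so the Lipschitz constant is fixed and no competition of scales arises. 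Where the paper derives the needed norm bound $\|X_v\|\le 3$ (a.s. for all but finitely many $n$) from the Haagerup--Thorbj\o rnsen moment-generating-function estimate plus Borel--Cantelli, you extract boundedness by including the coordinate monomials $x_v$ in your countable family, so that $\|X_v^{(m(n),\K^{(n)})}\|\to\|s_v\|=2$ a.s. along the chosen sequence; this bootstrap is valid (the monomials lie in the countable family, so there is no circularity) and lets you dispense with the concentration inequality altogether. Finally, your passage from the a.s. statement of Theorem \ref{thm:first-convergence} to a deterministic choice of $\K^{(n)}$ via convergence in probability and a Borel--Cantelli argument is, if anything, more careful than the paper's phrasing ``a.s. there is $i=i(n)$'', and is exactly what is needed for the sequence in the theorem statement to be non-random.
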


\begin{proof}
Let $\{\K^{(i)}\}_{i=1}^{\infty}$ denote a sequence provided by Theorem
\ref{thm:first-convergence}. Let $\P(V,n)$ denote the space of n.c.
polynomials of degree at most $n$ in $V$-indexed variables. This
can be given the $\ell^{1}$-norm 
\[
\left\Vert \sum_{k\leq n}\sum_{v_{1}\ldots v_{k}}a_{v_{1}v_{2}\cdots v_{k}}X_{v_{1}}X_{v_{2}}\cdots X_{v_{k}}\right\Vert _{1}\eqdf\sum_{k\leq n}\sum_{v_{1}\ldots v_{k}}|a_{v_{1}v_{2}\cdots v_{k}}|.
\]
Note for later that if $x_{v}$ are elements of a Banach algebra and
$p\in\mathcal{P}(V,n)$, then the map 
\[
p\mapsto\|p(x_{v}:v\in V)\|
\]
is Lipschitz w.r.t. $\|.\|_{1}$ with constant
\begin{equation}
\sup_{k\leq n,v_{1}\ldots v_{k}}\|x_{v_{1}}x_{v_{2}}\cdots x_{v_{k}}\|\leq\sup_{k\leq n}\left(\sup_{v\in V}\|x_{v}\|^{k}\right).\label{eq:lip-const}
\end{equation}

Let $S(n)$ be a finite $\frac{1}{4^{n}}$-net for the $\ell^{1}$-unit
ball in $\P(V,n)$ and let $p$ denote some element of $S(n)$. By
Theorem \ref{thm:strong_convergence_technical_theorem} --- applied
to the corresponding unique n.c. $*$-polynomial $q$ such that for
n.c indeterminates $X_{v}$
\[
p(X_{v}+X_{v}^{*}:v\in\V)=q(X_{v},X_{v}^{*}:v\in\V)
\]
--- for any $\epsilon>0$ there is some $m=m(n)\geq n$ such that
\[
\left|\|p(L_{v}^{(m)}+(L_{v}^{(m)})^{*}:v\in V)\|_{(\C^{m}){}^{\otimes F}\otimes\O^{V}(m)}-\|p(s_{v}:v\in V)\|_{\SS_{\Gamma}}\right|<\frac{1}{2n}
\]
for all $p\in S(n)$.

Now by Theorem \ref{thm:first-convergence} --- again applied to
the $q$ --- a.s. there is $i=i(n)\geq n$ such that 
\[
\left|\|p(L_{v}^{(m)}+(L_{v}^{(m)})^{*}:v\in V)\|_{(\C^{m}){}^{\otimes F}\otimes\O^{V}(m)}-\|p(X_{v}^{(m,\K^{(i)})}\,:\,v\in V)\|_{(\C^{m})^{\otimes F}\otimes\C^{\otimes\mathbf{K}}}\right|<\frac{1}{2n}
\]
for all $p\in S(n)$, hence for all $p\in S(n)$
\begin{equation}
\left|\|p(X_{v}^{(m(n),\K^{(i(n))})}\,:\,v\in V)\|-\|p(s_{v}:v\in V)\|\right|<\frac{1}{n}.\label{eq:diff1}
\end{equation}

Having now fixed $m(n)$ and $i(n)$, let $N_{0}\subset\Omega$ be
the event that for infinitely many $n\in\N$ and $v\in V$, $\|X_{v}^{(m(n),\K^{(i(n))})}\|>3$.
Recall that $X_{v}^{(m(n),\K^{(m(n))})}=\id\otimes\tilde{X}_{v}(n)$
where 
\[
\tilde{X}_{v}(n)\in\SGRM(M(v,n),M(v,n)^{-1})
\]
for some $M(v,n)\geq K^{(i(n))}\geq n$. 

In \cite[(5.3),(5.4)]{HaagerupThr} it is proved that
\[
\E[\exp(t\|\tilde{X}_{v}(n)\|)]\leq M(v,n)\exp\left(2t+\frac{t^{2}}{2M(v,n)}\right)
\]
from which it follows from exponential Chebyshev's inequality ---
taking $t=3\log M(v,n)$ --- that for $n$ large enough
\[
\mathbb{P}(\|\tilde{X}_{v}(n)\|>3)\leq\frac{2M(v,n)^{7}}{M(\nu,n)^{9}}\leq2M(v,n)^{-2}\leq\frac{2}{n^{2}}.
\]
Hence by the Borel---Cantelli Lemma $N_{0}$ is a $P$-null set.
By the earlier remarks around (\ref{eq:lip-const}) it follows that
a.s., for all but finitely many $n$, the maps
\begin{align*}
p & \mapsto\|p(X_{v}^{(m(n),\K^{(i(n))})}\,:\,v\in V)\|,\\
p & \mapsto\|p(s_{v}\,:\,v\in V)\|
\end{align*}
are Lipschitz on $\P(V,n)$ w.r.t. $\|.\|_{1}$ with Lipschitz constants
at most $3^{n}$. Combining this with (\ref{eq:diff1}) we get a.s.,
for all but finitely many $n$, for all $p$ in the unit $\ell^{1}$
ball of $\P(V,n)$ 
\[
\left|\|p(X_{v}^{(m(n),\K^{(i(n))})}\,:\,v\in V)\|-\|p(s_{v}:v\in V)\|\right|<\frac{1}{n}+2\left(\frac{3}{4}\right)^{n}.
\]
But \emph{any} fixed n.c. polynomial $Q$ is a fixed scalar multiple
of some such fixed $p$ for all sufficiently large $n$. Hence a.s.,
for any $Q$
\[
\|Q(X_{v}^{(m(n),\K^{(i(n))})}\,:\,v\in V)\|\to\|Q(s_{v}:v\in V)\|
\]
as $n\to\infty$.
\end{proof}
\begin{proof}[Proof of Theorem \ref{thm:strong-converge-raags}]
Let $\{(m(n),K^{(n)})\}_{n=1}^{\infty}$ be as supplied by Theorem
\ref{thm:main-random-matrix} and for $v\in\V$ let
\[
X_{v}(n)\eqdf X_{v}^{(m(n),\K^{(n)})}(\omega)
\]
 for one of the full-measure $\omega\in\Omega$ for which the conclusion
of Theorem \ref{thm:main-random-matrix} holds. 

Let $\F$ be a free ultrafilter on $\N$. Let 
\[
\U_{\F}\eqdf\prod_{n\to\F}\End((\C^{m(n)}){}^{\otimes F})\otimes\End(\C^{\otimes\K^{(n)}}).
\]
The conclusion of Theorem \ref{thm:main-random-matrix} implies that
\[
\theta:s_{v}\mapsto\{X_{v}(n)\}_{n\in\N}
\]
extends to an embedding of $C^{*}$-algebras
\[
\SS_{\Gamma}\hookrightarrow\U_{\F}.
\]
Let $U_{v}(n)\eqdf\psi(X_{v}(n))$ be the result of the continuous
functional calculus applied to $\psi$ and $X_{v}(n)$. These are
unitary matrices since $\psi(\R)\subset S^{1}$. Furthermore from
(\ref{eq:rm00}) we obtain
\[
U_{v}(n)=\psi(\tilde{X}_{v}(n))\otimes\id_{(\C^{m(n)}){}^{\otimes F\backslash F(v)}}\otimes\id_{\C_{V\backslash\{v\}}^{\otimes\K^{(n)}}}
\]
since continuous functional calculus can easily be checked to respect
unital $C^{*}$-algebra embeddings
\[
A\hookrightarrow A\otimes B;\quad a\mapsto a\otimes1_{B}.
\]
Therefore if $v$ and $w$ are non-adjacent in $\Gamma$, $U_{v}(n)$
and $U_{w}(n)$ commute since they act in disjoint channels. Hence
for each $n\in\N$
\[
\lambda(v)\mapsto U_{v}(n)
\]
 extends to a f.d. unitary representation $\pi_{n}:G\Gamma\to\U(N_{n})$
of $G\Gamma$, viewed here as an embedded subgroup of $C_{r}^{*}(G\Gamma)$. 

By Proposition \ref{prop:main-Cstar} the composition
\[
\lambda(v)\mapsto\psi(s_{v})\mapsto\theta(\psi(s_{v}))=\psi(\theta(s_{v}))=\{U_{v}(n)\}_{n\in\N}\eqdf\pi_{\F}(v)
\]
extends to an embedding $C_{\red}^{*}(G\Gamma)\hookrightarrow\U_{\F}$.
Note that statement by itself is not enough to deduce $\lambda(v)\mapsto U_{v}(n)$
extends to a unitary representation --- only an approximate one ---
so the previous discussion about tensor channels was necessary.

Unpacking the statement above, it implies for any $z\in\C[G\Gamma]$
\[
\|\lambda_{G\Gamma}(z)\|=\|\pi_{\F}(z)\|=\lim_{n\to\F}\|\pi_{n}(z)\|.
\]
Since this holds for any free $\F$, it holds that in fact
\[
\lim_{n\to\infty}\|\pi_{n}(z)\|=\|\lambda_{G\Gamma}(z)\|.
\]
\end{proof}

\section{Proof of Theorem \ref{thm:spectral-gap}\label{sec:Proof-of-Theorem-sg}}

We first explain the language of Theorem \ref{thm:spectral-gap}.
Every compact hyperbolic 3-manifold is obtained as 
\[
M=\Lambda\backslash\mathbb{H}^{3}
\]
 where $\Lambda$ is a discrete torsion-free subgroup of $\PSL_{2}(\C)$.
Given any f.d. unitary representation $\pi:\Lambda\to\U(N)$ consider
the fibered product 
\[
E_{\pi}\eqdf\Lambda\backslash_{\pi}[\mathbb{H}^{3}\otimes\C^{N}]
\]
where $\backslash_{\pi}$ means quotient w.r.t. the action $g(z,w)=(g.z,\pi(g).w)$.
This \emph{associated vector bundle $E_{\pi}$ }is a smooth vector
bundle over $M$. Sections of $E_{\pi}$ can be identified with $\C^{N}$-valued
functions on $\PSL_{2}(\C)$ that transform according to 
\begin{align}
f(g.z) & =\pi(g)f(z),\quad\forall g\in\Lambda,\label{eq:Gamma-eq}\\
f(z.k) & =f(z),\quad\forall k\in\mathrm{PSU}_{2}(\C).\label{eq:K-inv}
\end{align}
As such, the Laplacian $\Delta_{\mathbb{H}^{3}}$ acts on such vector-valued
smooth functions coordinate-wise, and can be regarded as a $\pi$-twisted
Laplacian $\Delta_{\pi}$ acting on smooth sections of $E_{\pi}$.
Because this operator arises from $\Delta_{\mathbb{H}^{3}}$, it corresponds
to the quadratic Casimir operator of $\PSL_{2}(\C)$ under the identification
between the universal enveloping algebra of $\mathfrak{psl}_{2}(\C)$
and right-$\PSL_{2}(\C)$-invariant differential operators on smooth
$\C^{N}$-valued functions satisfying (\ref{eq:Gamma-eq}) and (\ref{eq:K-inv}).
It can also be checked that $(\Delta_{\pi}+1)^{-1}$ is compact on
a suitable Sobolev space and hence the spectrum of $\Delta_{\pi}$
consists only of discrete eigenvalues.

As such, using the classification of the unitary dual of $\PSL_{2}(\C)$
and the well-known values of the Casimir operator on spherical vectors
in said irreducible representations, Theorem \ref{thm:spectral-gap}
will follow from the following theorem.
\begin{thm}
\label{thm:rep-theory}Suppose $\Lambda$ is as above and $\pi_{i}:\Lambda\to\U(N_{i})$
are a sequence of f.d. unitary representations that strongly converge
to the regular representation. For any $\eta>0$, for $i$ large enough
depending on $\eta$, no complementary series $\mathcal{C}^{u}$ with
$u\in[\eta,2)$ appears as a sub-representation of the induced representation
$\rho_{i}\eqdf\Ind_{\Lambda}^{\PSL_{2}(\C)}\pi_{i}$. Neither does
the trivial representation appear.
\end{thm}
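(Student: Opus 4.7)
The plan is to argue by contradiction using Fell's continuity of induction combined with the fact that strong convergence implies Fell convergence of reduced-dual supports; throughout I write $G := \PSL_{2}(\C)$.

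First, I would observe that strong convergence $\pi_{i}\rsa\lambda_{\Lambda}$ (i.e.\ $\|\pi_{i}(z)\|\to\|\lambda_{\Lambda}(z)\|$ for every $z\in\C[\Lambda]$) implies that any weak-$*$ accumulation point of states on $\pi_{i}(\C[\Lambda])$ extends to a state on $C_{\red}^{*}(\Lambda)$. At the level of representation supports this means $\mathrm{supp}(\pi_{i})\to\mathrm{supp}(\lambda_{\Lambda})=\hat{\Lambda}_{\mathrm{temp}}$ in the Fell topology on closed subsets of $\hat{\Lambda}$. Then Fell's theorem makes the induction functor $\pi\mapsto\Ind_{\Lambda}^{G}\pi$ continuous in Fell topology, and cocompactness of $\Lambda$ gives the isomorphism $\Ind_{\Lambda}^{G}\lambda_{\Lambda}\cong\lambda_{G}$. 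Combining these yields $\mathrm{supp}(\rho_{i})\to\mathrm{supp}(\lambda_{G})=\hat{G}_{\mathrm{temp}}$ in the Fell topology on $\hat{G}$.

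Now suppose for contradiction that, along some subsequence of $i$'s, each $\rho_{i}$ contains a subrepresentation $\sigma_{i}\in\mathcal{K}:=\{\mathcal{C}^{u}:u\in[\eta,2)\}\cup\{\mathbf{1}\}$, so that $\sigma_{i}\in\mathrm{supp}(\rho_{i})$. The closure $\overline{\mathcal{K}}=\{\mathcal{C}^{u}:u\in[\eta,2]\}\cup\{\mathbf{1}\}$ (with the $u=2$ endpoint identified with $\mathbf{1}$ via the spherical transform) is compact in $\hat{G}$, so a further subsequence has a Fell limit $\sigma_{i}\to\sigma_{*}\in\overline{\mathcal{K}}$. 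The convergence $\mathrm{supp}(\rho_{i})\to\hat{G}_{\mathrm{temp}}$ then forces $\sigma_{*}\in\hat{G}_{\mathrm{temp}}$. But every element of $\overline{\mathcal{K}}$ is non-tempered (and $G$ is non-amenable, so $\mathbf{1}$ is also non-tempered), a contradiction. This completes the proof modulo Steps 1 and 2.

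The main obstacle — or rather the pair of classical facts to be verified carefully — is the implication ``strong convergence of $\pi_{i}\Rightarrow$ Fell convergence of supports'', and Fell's continuity of induction from the cocompact $\Lambda$ into $G$. The first is essentially duality between convergence of $C^{*}$-norms and weak-$*$ convergence of states on $C_{\max}^{*}(\Lambda)$; the second is standard Mackey machinery, with cocompactness used decisively to secure $\Ind_{\Lambda}^{G}\lambda_{\Lambda}\cong\lambda_{G}$. Together these avoid both the resolvent estimates and the cusp analysis of \cite{HM21}, which is the simplification alluded to by the authors.
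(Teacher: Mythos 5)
Your endgame (replacing the paper's explicit Kunze--Stein/spherical-function estimate by the soft observation that $\{\mathcal{C}^{u}:u\in[\eta,2)\}\cup\{\mathbf{1}\}$ has closure consisting of non-tempered points of the dual of $\PSL_{2}(\C)$, so that no Fell-limit of subrepresentations of $\rho_{i}$ can land there if limit points of $\mathrm{supp}(\rho_{i})$ are forced to be tempered) is a legitimate qualitative alternative to the second half of the paper's argument. The genuine gap is in the bridge you use to get that forcing, namely the claim $\mathrm{supp}(\rho_{i})\to\widehat{G}_{\mathrm{temp}}$ ``by Fell's continuity of induction.'' What your contradiction actually requires is the upper bound $\limsup_{i}\|\rho_{i}(f)\|\leq\|\lambda_{G}(f)\|$ for $f\in C_{c}(G)$ (equivalently: any Fell-limit of points of $\mathrm{supp}(\rho_{i})$ is weakly contained in $\lambda_{G}$). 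Fell's continuity theorem for induction is the preservation of weak containment ($\pi\prec\pi'\Rightarrow\Ind\pi\prec\Ind\pi'$, i.e.\ continuity for the inner hull-kernel topology), and it only ever produces the opposite, lower-bound direction ($\lambda_{G}=\Ind\lambda_{\Lambda}$ is weakly contained in the limit of the $\rho_{i}$). Moreover it cannot be applied to the $\pi_{i}$ individually: each $\pi_{i}$ is a finite dimensional representation of the non-amenable group $\Lambda$, hence is \emph{never} weakly contained in $\lambda_{\Lambda}$ and its support consists of non-tempered points; only the asymptotic norm statement $\|\pi_{i}(z)\|\to\|\lambda_{\Lambda}(z)\|$ for scalar coefficients $z\in\C[\Lambda]$ is available, and there is no soft functoriality that converts this into the corresponding statement for the induced representations. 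If you try to patch it by replacing $\pi_{i}$ with tails $\bigoplus_{i\geq N}\pi_{i}$, you find you again need $\limsup_{N}\|\Ind(\bigoplus_{i\geq N}\pi_{i})(f)\|\leq\|\lambda_{G}(f)\|$, i.e.\ the same unproven bound -- the argument is circular at exactly this point.

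This bound is the content of the paper's Proposition \ref{prop:rep-conv}, and proving it is where the real work lies: one writes $\rho_{i}(f)\cong\sum_{g\in\Lambda}a_{f}(g)\otimes\pi_{i}(g)$ with $a_{f}(g)$ compact operators on $L^{2}(D)$ (cocompactness is used here, to make the sum finite -- not, as you say, to secure $\Ind_{\Lambda}^{G}\lambda_{\Lambda}\cong\lambda_{G}$, which holds for any closed subgroup), and then one needs strong convergence \emph{with coefficients in the compact operators}, which does not follow formally from scalar strong convergence but is obtained from exactness/nuclearity of $\mathcal{K}$ via the Haagerup--Thorbj{\o}rnsen argument (the inclusion $\mathcal{K}\otimes_{\min}\prod_{i\to\F}A_{\pi_{i}}\hookrightarrow\prod_{i\to\F}\mathcal{K}\otimes_{\min}A_{\pi_{i}}$). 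Your proposal elides this step by calling it ``standard Mackey machinery''; without it, the assertion $\mathrm{supp}(\rho_{i})\to\widehat{G}_{\mathrm{temp}}$ is unsupported. If you insert the operator-coefficient upgrade (Proposition \ref{prop:rep-conv}), your Fell-topology finish does go through and is arguably softer than the paper's quantitative test-function computation, though it gives no effective rate, whereas the paper's Kunze--Stein argument does.
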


Recall the following presentation of the complementary series representations
$\mathcal{C}^{u}$ for $0<u<2$ (see for example \cite[Chapter VI.4(b)]{Na59}).
Let $\mathcal{C}^{u}$ be the completion with respect to the inner
product
\[
\left\langle f_{1},f_{2}\right\rangle _{u}=\int_{\C}\int_{\C}\frac{f_{1}(z_{1})\overline{f_{2}(z_{2})}}{|z_{1}-z_{2}|^{-2+u}}\mathrm{d}x_{1}\mathrm{d}x_{2}\mathrm{d}y_{1}\mathrm{d}y_{2},
\]
where $z_{j}=x_{j}+iy_{j}$ for $j=1,2$, of the collection of measurable
functions $f:\C\to\C$ such that 
\[
\int_{\C}\int_{\C}\frac{|f(z_{1})||f(z_{2})|}{|z_{1}-z_{2}|^{-2+u}}\mathrm{d}x_{1}\mathrm{d}x_{2}\mathrm{d}y_{1}\mathrm{d}y_{2}<\infty.
\]
 Define the unitary representation $\rho^{u}$ of $\mathrm{SL}_{2}(\C)$
on $\mathcal{C}^{u}$ by
\[
\rho^{u}(g)f(z)=|bz+d|^{-2-u}f\left(\frac{az+c}{bz+d}\right),
\]
for $g=\begin{pmatrix}a & b\\
c & d
\end{pmatrix}\in\mathrm{SL}_{2}(\C)$. These irreducible representations descend to irreducible representations
on $\PSL_{2}(\C)$, $[g]\mapsto\rho^{u}([g])$ as $\pm I$ lie in
their kernel. A normalized spherical vector for the $\mathrm{PSU}_{2}(\C$)
action of $\rho^{u}$ is given by
\begin{equation}
\psi^{u}(z)=\frac{1}{\sqrt{\pi}}\left(|z|^{2}+1\right)^{-\frac{1}{2}(2+u)}.\label{eq:sph-vector}
\end{equation}
That is, for any $[k]\in\mathrm{PSU}_{2}(\C)$, one has $\rho^{u}([k])\psi^{u}(z)=\psi^{u}(z)$.

To prove Theorem \ref{thm:rep-theory}, we first demonstrate the following
convergence in norm of Fourier transforms with respect to representations.
\begin{prop}
\label{prop:rep-conv}Under the assumptions of Theorem \ref{thm:rep-theory},
for any $f\in C_{c}^{\infty}(\PSL_{2}(\C))$, 
\[
\lim_{i\to\infty}\|\rho_{i}(f)\|=\|\rho_{\infty}(f)\|,
\]
where $\rho_{\infty}$ is the right regular representation of $\PSL_{2}(\C)$
and $\rho_{i}$ are the representations defined in Theorem \ref{thm:rep-theory}.
\end{prop}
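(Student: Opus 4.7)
The plan is to realize $\rho_i(f)$ as a finite sum whose second tensor factor involves $\pi_i$ applied to a fixed finite subset of $\Lambda$, and then to invoke strong convergence after a finite-rank approximation. First, I would fix a relatively compact fundamental domain $D \subset \PSL_2(\C)$ for $\Lambda$, which exists because $M$ is compact. Sections of $E_{\pi_i}$ identify with $L^2(D)\otimes\C^{N_i}$ via restriction of $\pi_i$-equivariant functions on $\PSL_2(\C)$ to $D$, and unfolding the convolution operator $\rho_i(f)$ against this fundamental domain yields an integral kernel representation
\[
(\rho_i(f)\phi)(g)=\int_D K_i(g,v)\phi(v)\,dv,\qquad K_i(g,v)=\sum_{\gamma\in\Lambda}f(g^{-1}\gamma v)\pi_i(\gamma).
\]
Since $\mathrm{supp}(f)$ is compact and $D$ is relatively compact, discreteness of $\Lambda$ forces this sum to collapse to a finite sum over elements $\gamma_1,\ldots,\gamma_k\in\Lambda$ that are independent of $i$. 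Letting $T_j$ denote the Hilbert--Schmidt integral operator on $L^2(D)$ with the smooth, compactly supported kernel $(g,v)\mapsto f(g^{-1}\gamma_j v)$, this yields $\rho_i(f)=\sum_{j=1}^k T_j\otimes\pi_i(\gamma_j)$. By the classical identity $\Ind_\Lambda^{\PSL_2(\C)}\lambda_\Lambda\cong\rho_\infty$, the same unfolding applied to the right regular representation gives the structurally identical formula $\rho_\infty(f)=\sum_{j=1}^k T_j\otimes\lambda_\Lambda(\gamma_j)$.

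Second, I would approximate each $T_j$ in operator norm by a finite-rank operator $T_j^{(N)}$, arranged so that all $T_j^{(N)}$ are supported on a common $R_N$-dimensional subspace of $L^2(D)$ (e.g.\ by truncating the kernels in a fixed orthonormal basis of $L^2(D)$). Identifying $T_j^{(N)}$ with matrices $M_j^{(N)} \in \Mat_{R_N}(\C)$, the element $z_N\eqdf\sum_j M_j^{(N)}\otimes\gamma_j\in\Mat_{R_N}(\C)\otimes\C[\Lambda]$ is independent of $i$ and satisfies
\[
\sum_{j=1}^k T_j^{(N)}\otimes\pi_i(\gamma_j)=(\id_{R_N}\otimes\pi_i)(z_N).
\]
Because $\Mat_{R_N}(\C)$ is nuclear, strong convergence $\pi_i\to\lambda_\Lambda$ is inherited by the amplifications $\id_{R_N}\otimes\pi_i\to\id_{R_N}\otimes\lambda_\Lambda$, so that $\lim_{i\to\infty}\|(\id_{R_N}\otimes\pi_i)(z_N)\|=\|(\id_{R_N}\otimes\lambda_\Lambda)(z_N)\|$ for each fixed $N$.

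Third, since $\pi_i(\gamma_j)$ and $\lambda_\Lambda(\gamma_j)$ are unitaries, the approximation error is uniformly bounded by
\[
\left\|\sum_{j=1}^k (T_j-T_j^{(N)})\otimes\pi_i(\gamma_j)\right\|\le\sum_{j=1}^k\|T_j-T_j^{(N)}\|,
\]
and analogously for $\lambda_\Lambda$; both bounds tend to zero as $N\to\infty$ independently of $i$. A standard $\varepsilon/3$ argument then gives $\|\rho_i(f)\|\to\|\rho_\infty(f)\|$. The main technical point is the bookkeeping in the first step --- organizing the unfolding so that the sum over $\Lambda$ collapses to a uniform finite sum with $i$-independent group elements. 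Once this is in place, the passage from strong convergence of $\pi_i\to\lambda_\Lambda$ to convergence at the matrix-amplified level is a routine nuclearity fact, and finite-rank approximation of the compact operators $T_j$ is standard.
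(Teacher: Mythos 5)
Your proposal is correct, and its first half is the same unfolding the paper uses: the paper simply cites \cite{HM21} for the identity $\rho_{i}(f)\cong\sum_{g}a_{f}(g)\otimes\pi_{i}(g)$, $\rho_{\infty}(f)\cong\sum_{g}a_{f}(g)\otimes\lambda_{\Lambda}(g)$ with finitely many $i$-independent group elements and compact coefficients, whereas you rederive it by hand (correctly, including the finiteness of the sum from cocompactness and discreteness). Where you genuinely diverge is in how strong convergence is pushed through the operator coefficients. The paper does it in one stroke at the level of compact coefficients: strong convergence gives an isometric embedding $C_{\red}^{*}(\Lambda)\hookrightarrow\prod_{i\to\F}A_{\pi_{i}}$, and since $\mathcal{K}(L^{2}(D))$ is nuclear (hence exact) one has $\mathcal{K}\otimes_{\min}\prod_{i\to\F}A_{\pi_{i}}\hookrightarrow\prod_{i\to\F}\mathcal{K}\otimes_{\min}A_{\pi_{i}}$, so $\lim_{i}\|[\Id\otimes\pi_{i}](z)\|=\|[\Id\otimes\lambda_{\Lambda}](z)\|$ for every $z\in\mathcal{K}\otimes\C[\Lambda]$, which applied to $z=\sum_{g}a_{f}(g)\otimes g$ finishes the proof. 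You instead truncate the compact operators $T_{j}$ to finite rank, use stability of strong convergence under amplification by matrices of a fixed size, and close with an $\varepsilon/3$ estimate that is uniform in $i$ because the $\pi_{i}(\gamma_{j})$ are unitaries; this is a valid alternative that avoids quoting exactness of $\mathcal{K}$ and the ultraproduct tensor inclusion of \cite[\S 9]{HaagerupThr}, at the cost of the truncation bookkeeping. The one soft spot is your justification of the amplification step: ``$\Mat_{R_{N}}(\C)$ is nuclear'' is not by itself the mechanism. The clean argument is the same ultrafilter trick with $\Mat_{R_{N}}(\C)$ in place of $\mathcal{K}$ --- the identification $\Mat_{R}(\C)\otimes\prod_{i\to\F}A_{\pi_{i}}\cong\prod_{i\to\F}\Mat_{R}(\C)\otimes A_{\pi_{i}}$ is elementary for fixed finite $R$ (no exactness needed), and injective $*$-homomorphisms are isometric, so $\lim_{i\to\F}\|(\id_{R}\otimes\pi_{i})(z_{N})\|=\|(\id_{R}\otimes\lambda_{\Lambda})(z_{N})\|$ for every free $\F$, hence as an ordinary limit; alternatively cite the standard lemma that strong convergence passes to polynomials with matrix coefficients. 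With that sentence supplied, your proof is complete.
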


\begin{proof}
Again let $\F$ denote a free ultrafilter on $\N$. Let $A_{\pi_{i}}$
denote the $C^{*}$-matrix-subalgebra generated by the image of $\pi_{i}$
. Let $D$ denote a Dirichlet fundamental domain for $\Lambda$ acting
on $\mathbb{H}^{3}$. Since the compact operators $\mathcal{K}\eqdf\mathcal{K}(L^{2}(D))$
are a nuclear $C^{*}$-algebra, they are exact and hence by \cite[\S 9]{HaagerupThr}
--- adapted to ultraproducts in the obvious way --- we have a natural
inclusion
\[
\mathcal{K}\otimes_{\min}\prod_{i\to\F}A_{\pi_{i}}\hookrightarrow\prod_{i\to\F}\mathcal{K}\otimes_{\min}A_{\pi_{i}}.
\]
Hence the embedding $C_{\red}^{*}(\Lambda)\hookrightarrow\prod_{i\to\F}A_{\pi_{i}}$
obtained from strong convergence extends to $\mathcal{K}\otimes_{\min}C_{\red}^{*}(\Lambda)\hookrightarrow\prod_{i\to\F}\mathcal{K}\otimes_{\min}A_{\pi_{i}}.$
Hence for any $z\in\mathcal{K}\otimes\C[\Lambda]$ we have 
\begin{equation}
\lim_{i\to\F}\|[\mathrm{Id}_{\End(\mathcal{K})}\otimes\pi_{i}](z)\|=\|[\mathrm{Id}_{\End(\mathcal{K})}\otimes\lambda_{\Lambda}](z)\|.\label{eq:limit-tensor}
\end{equation}
Again, since the right hand side does not depend on $\F$, the limit
can be replaced by a standard limit.

Now, exactly as in \cite{HM21}, there are compact operators $a_{f}(\lambda)\in\mathcal{K}$
such that there are unitary conjugacies
\begin{align*}
\rho_{i}(f) & \cong\sum_{g\in\Lambda}a_{f}(g)\otimes\pi_{i}(g),\\
\rho_{\infty}(f) & \cong\sum_{g\in\Lambda}a_{f}(g)\otimes\lambda_{\Lambda}(g),
\end{align*}
and since $\Lambda$ is a uniform lattice, each sum above is finitely
supported. Hence applying (\ref{eq:limit-tensor}) with $z=\sum_{g\in\Lambda}a_{f}(g)\otimes g$
now gives the result.
\end{proof}
\begin{proof}[Proof of Theorem \ref{thm:rep-theory}]
 Recall the following Kunze-Stein estimate \cite{St.To78}: let $1\leq p<2$,
then there exists a constant $c_{p}>0$ such that for any $f\in L^{p}(\mathrm{PSL}_{2}(\C))$
that is right-$\mathrm{PSU}_{2}(\C)$-invariant and any $\psi\in L^{2}(\mathrm{PSL}_{2}(\C))$,
\[
\|f*\psi\|_{2}\leq c_{p}\|f\|_{p}\|\psi\|_{2}.
\]
So for any such $f\in C_{c}^{\infty}(\PSL_{2}(\C))$ and $1\leq p<2$,
we have 
\begin{align*}
\|\rho_{\infty}(f)\| & =\sup_{\|\psi\|_{L^{2}(\PSL_{2}(\C))}=1}\|\rho_{\infty}(f)\psi\|_{L^{2}(\PSL_{2}(\C))}\\
 & =\sup_{\|\psi\|_{L^{2}(\PSL_{2}(\C))}=1}\|f*\psi\|_{L^{2}(\PSL_{2}(\C))}\\
 & \leq c_{p}\|f\|_{p}.
\end{align*}
Recall the Cartan decomposition $\mathrm{SL}_{2}(\C)=KA^{+}K$ where
$K=\mathrm{SU}_{2}(\C)$ and 

\[
A^{+}=\left\{ a_{r}=\begin{pmatrix}e^{\frac{r}{2}} & 0\\
0 & e^{-\frac{r}{2}}
\end{pmatrix}:r\geq0\right\} ,
\]
so that the Haar measure on $\mathrm{SL}_{2}(\C$) is $\sinh^{2}(r)\mathrm{d}k\mathrm{d}r\mathrm{d}k'$.
Then given $\varepsilon\ll1$ and $T>1$, let $\tilde{f}_{T}:\mathbb{R}\to\mathbb{R}$
be a smooth approximation to the indicator function $\mathds{1}_{[T,T+1]}$
that is non-negative, equal to $1$ on $[T+\varepsilon,T+1-\varepsilon]$,
and compactly supported in $[T,T+1]$ so that $\tilde{f}_{T}\leq\mathds{1}_{[T,T+1]}$
on $\mathbb{R}$. Now define $f_{T}:\mathrm{SL}_{2}(\mathbb{C})\to\mathbb{R}$
by $f_{T}(ka_{r}k')=\tilde{f}_{T}(r)$. This function descends to
$\PSL_{2}(\C)$ as it is right-$\{\pm I\}$-invariant and we denote
it by the same symbol. Then by characterisation of the Haar measure
on the quotient we have 

\[
\|f_{T}\|_{p}^{p}=\int_{K}\int_{0}^{\infty}\int_{K}|f_{T}(ka_{r}k')|^{p}\sinh^{2}(r)\mathrm{d}k\mathrm{d}r\mathrm{d}k'\leq\int_{T}^{T+1}\sinh^{2}(r)\mathrm{d}r\leq2e^{2T},
\]
resulting in $\|\rho_{\infty}(f_{T})\|\leq C_{p}e^{\frac{2T}{p}}$
for some $C_{p}>0$, for any $1\leq p<2$ and $T$ sufficiently large. 

On the converse, if $\mathcal{C}^{u}$ is a sub-representation of
$\rho_{i}$ for some $u\in[\eta,2)$ then $\|\rho_{i}(f_{T})\|\geq\|\rho^{u}(f_{T})\|=\sup_{\|\psi_{1}\|=\|\psi_{2}\|=1}\left|\left\langle \rho^{u}(f_{T})\psi_{1},\psi_{2}\right\rangle \right|$.
We test this right hand side with $\psi_{1}=\psi_{2}=\psi^{u}$, where
$\psi^{u}$ is as in (\ref{eq:sph-vector}), to obtain a lower bound
for the operator norm. From \cite{Po92,Na59}, it is computed that 

\[
\langle\rho^{u}(a_{r})\psi^{u},\psi^{u}\rangle=\frac{2}{u}\frac{\sinh\left(\frac{1}{2}ur\right)}{\sinh(r)}.
\]
Then, since $\psi^{u}$ is spherical for $\rho^{u}$, we have $\langle\rho^{u}([ka_{r}k'])\psi^{u},\psi^{u}\rangle=\langle\rho^{u}([a_{r}])\psi^{u},\psi^{u}\rangle$
for any $[k],[k']\in\mathrm{PSU}_{2}(\C)$. Thus, since $[g]\mapsto f_{T}([g])\langle\rho^{u}([g])\psi^{u},\psi^{u}\rangle$
lifts to $g\mapsto f_{T}(g)\langle\rho^{u}(g)\psi^{u},\psi^{u}\rangle$
on $\mathrm{SL}_{2}(\C)$ which is right-$\{\pm I\}$-invariant, we
again use characterisation of the Haar measure on the quotient to
compute
\begin{align*}
\left|\left\langle \rho^{u}(f_{T})\psi^{u},\psi^{u}\right\rangle \right| & =\int_{K}\int_{0}^{\infty}\int_{K'}f_{T}(ka_{r}k')\langle\rho^{u}(ka_{r}k')\psi^{u},\psi^{u}\rangle\sinh^{2}(r)\mathrm{d}k\mathrm{d}r\mathrm{d}k'\\
 & =\frac{2}{u}\int_{0}^{\infty}f_{T}(a_{r})\sinh(r)\sinh\left(\frac{1}{2}ur\right)\mathrm{d}r\\
 & \geq\frac{2}{u}\int_{T+\varepsilon}^{T+1-\varepsilon}\sinh(r)\sinh\left(\frac{1}{2}ur\right)\mathrm{d}r\\
 & \geq\frac{1}{u^{2}}e^{T(1+\frac{u}{2})}.
\end{align*}
Now, since by Proposition \ref{prop:rep-conv} $\lim_{i\to\infty}\|\rho_{i}(f_{T})\|=\|\rho_{\infty}(f_{T})\|,$
there exists some $I$ for which $\|\rho_{i}(f_{T})\|\leq\frac{3}{2}\|\rho_{\infty}(f_{T})\|\leq\frac{3}{2}C_{p}e^{\frac{2T}{p}}$
for all $i\geq I$ and all $1\leq p<2$. If $\mathcal{C}^{u}$ is
a sub-representation of $\rho_{i}$ for some $u\in[\eta,2)$ and some
$i\geq I$, then it follows that $\frac{1}{\eta^{2}}e^{T(1+\frac{\eta}{2})}\leq\frac{3}{2}C_{p}e^{\frac{2T}{p}}$
for any $1\leq p<2$. Choosing $p>\frac{2}{1+\frac{\eta}{2}}\in[1,2)$
then gives a contradiction when $T$ is chosen sufficiently large.

For the trivial representation, it is easy to compute that $\|\mathrm{triv}(f_{T})\|\geq e^{2T}$
and so the same contradiction holds.
\end{proof}
\bibliographystyle{amsalpha}
\bibliography{RA}

\providecommand{\bysame}{\leavevmode\hbox to3em{\hrulefill}\thinspace}
\providecommand{\MR}{\relax\ifhmode\unskip\space\fi MR }
\providecommand{\MRhref}[2]{%
  \href{http://www.ams.org/mathscinet-getitem?mr=#1}{#2}
}
\providecommand{\href}[2]{#2}
\begin{thebibliography}{TWW17}

\bibitem[Ago13]{Agol}
Ian Agol, \emph{The virtual {Haken} conjecture (with an appendix by {Ian}
  {Agol}, {Daniel} {Groves} and {Jason} {Manning}).}, Doc. Math. \textbf{18}
  (2013), 1045--1087 (English).

\bibitem[Avi82]{Avitzour}
Daniel Avitzour, \emph{Free products of {$C^*$}-algebras}, Trans. Am. Math.
  Soc. \textbf{271} (1982), 423--435 (English).

\bibitem[BC19]{BordenaveCollins}
Charles Bordenave and Beno\^{i}t Collins, \emph{Eigenvalues of random lifts and
  polynomials of random permutation matrices}, Ann. of Math. (2) \textbf{190}
  (2019), no.~3, 811--875. \MR{4024563}

\bibitem[BC22a]{belinschi2022strong}
Serban Belinschi and Mireille Capitaine, \emph{Strong convergence of tensor
  products of independent {G.U.E.} matrices}, 2022, Preprint, 2205.07695.

\bibitem[BC22b]{bordenave2022strong}
Charles Bordenave and Beno\^{i}t Collins, \emph{Strong asymptotic freeness for
  independent uniform variables on compact groups associated to non-trivial
  representations}, 2022, Preprint, arXiv:2012.08759.

\bibitem[BC23]{BordenaveCollins3}
\bysame, \emph{Norm of matrix-valued polynomials in random unitaries and
  permutations}, 2023, Preprint, arXiv:2304.05714.

\bibitem[BDF77]{BrownDouglasFillmore2}
L.~G. Brown, R.~G. Douglas, and P.~A. Fillmore, \emph{Extensions of
  {$C^*$}-algebras and {$K$}-homology}, Ann. Math. (2) \textbf{105} (1977),
  265--324 (English).

\bibitem[BHW11]{BHW}
Nicolas Bergeron, Fr{\'e}d{\'e}ric Haglund, and Daniel~T. Wise,
  \emph{Hyperplane sections in arithmetic hyperbolic manifolds}, J. Lond. Math.
  Soc., II. Ser. \textbf{83} (2011), no.~2, 431--448 (English).

\bibitem[BK97]{BK}
Bruce Blackadar and Eberhard Kirchberg, \emph{Generalized inductive limits of
  finite-dimensional {$C^*$}-algebras}, Math. Ann. \textbf{307} (1997), no.~3,
  343--380 (English).

\bibitem[BO08]{BrownOzawa}
Nathanial~P. Brown and Narutaka Ozawa, \emph{{$C^*$}-algebras and
  finite-dimensional approximations}, Grad. Stud. Math., vol.~88, Providence,
  RI: American Mathematical Society (AMS), 2008 (English).

\bibitem[Bor]{bordenave2015new}
Charles Bordenave, \emph{A new proof of {F}riedman's second eigenvalue theorem
  and its extension to random lifts}, Annales scientifiques de l'Ecole normale
  sup{\'e}rieure, to appear, available at arxiv:1502.04482.

\bibitem[BW12]{BW}
Nicolas Bergeron and Daniel~T. Wise, \emph{A boundary criterion for
  cubulation.}, Am. J. Math. \textbf{134} (2012), no.~3, 843--859 (English).

\bibitem[CGP22]{CollinsGuionnetParraud}
Beno{\^{\i}}t Collins, Alice Guionnet, and F{\'e}lix Parraud, \emph{On the
  operator norm of non-commutative polynomials in deterministic matrices and
  iid {GUE} matrices}, Camb. J. Math. \textbf{10} (2022), no.~1, 195--260
  (English).

\bibitem[CL02]{CrispLaca}
John Crisp and Marcelo Laca, \emph{On the {Toeplitz} algebras of right-angled
  and finite-type {Artin} groups.}, J. Aust. Math. Soc. \textbf{72} (2002),
  no.~2, 223--245 (English).

\bibitem[CM14]{Collins2014}
Beno\^{i}t Collins and Camille Male, \emph{The strong asymptotic freeness of
  {H}aar and deterministic matrices}, Ann. Sci. \'Ec. Norm. Sup\'er. (4)
  \textbf{47} (2014), no.~1, 147--163. \MR{3205602}

\bibitem[Cun77]{Cuntz}
Joachim Cuntz, \emph{Simple {$C^*$}-algebras generated by isometries}, Commun.
  Math. Phys. \textbf{57} (1977), 173--185 (English).

\bibitem[Fri03]{FriedmanRelative}
J.~Friedman, \emph{Relative expanders or weakly relatively {R}amanujan graphs},
  Duke Math. J. \textbf{118} (2003), no.~1, 19--35. \MR{1978881}

\bibitem[Hay14]{Hayes}
Ben Hayes, \emph{An {$l^p$}-version of von {Neumann} dimension for {Banach}
  space representations of sofic groups}, J. Funct. Anal. \textbf{266} (2014),
  no.~2, 989--1040 (English).

\bibitem[HM23]{HM21}
Will Hide and Michael Magee, \emph{Near optimal spectral gaps for hyperbolic
  surfaces}, Ann. Math. (2) \textbf{198} (2023), no.~2, 791--824 (English).

\bibitem[HPS18]{HallPuderSawin}
Chris Hall, Doron Puder, and William~F. Sawin, \emph{Ramanujan coverings of
  graphs}, Adv. Math. \textbf{323} (2018), 367--410 (English).

\bibitem[HT05]{HaagerupThr}
U.~Haagerup and S.~Thorbj{\o}rnsen, \emph{A new application of random matrices:
  {$\mathrm{Ext}(C^*_{\mathrm{red}}(F_2))$} is not a group}, Ann. of Math. (2)
  \textbf{162} (2005), no.~2, 711--775. \MR{2183281}

\bibitem[HW08]{HW}
Fr{\'e}d{\'e}ric Haglund and Daniel~T. Wise, \emph{Special cube complexes},
  Geom. Funct. Anal. \textbf{17} (2008), no.~5, 1551--1620 (English).

\bibitem[HW10]{HW2}
\bysame, \emph{Coxeter groups are virtually special}, Adv. Math. \textbf{224}
  (2010), no.~5, 1890--1903 (English).

\bibitem[KM12]{KM}
Jeremy Kahn and Vladimir Markovic, \emph{Immersing almost geodesic surfaces in
  a closed hyperbolic three manifold}, Ann. Math. (2) \textbf{175} (2012),
  no.~3, 1127--1190 (English).

\bibitem[LM23]{louder2023strongly}
Larsen Louder and Michael Magee, \emph{Strongly convergent unitary
  representations of limit groups}, 2023, arXiv:2210.08953 with Appendix by
  Will Hide and Michael Magee.

\bibitem[LPS88]{LPS}
A.~Lubotzky, R.~Phillips, and P.~Sarnak, \emph{Ramanujan graphs}, Combinatorica
  \textbf{8} (1988), no.~3, 261--277. \MR{963118}

\bibitem[Mal12]{Male}
Camille Male, \emph{The norm of polynomials in large random and deterministic
  matrices, with appendix by {D}. {S}hlyakhtenko}, Probab. Theory Relat. Fields
  \textbf{154} (2012), no.~3-4, 477--532 (English).

\bibitem[Mar88]{Margulis}
G.~A. Margulis, \emph{Explicit group-theoretical constructions of combinatorial
  schemes and their application to the design of expanders and concentrators},
  Probl. Inf. Transm. \textbf{24} (1988), no.~1, 39--46 (English).

\bibitem[M{\l}o04]{Mlot}
Wojciech M{\l}otkowski, \emph{{$\Lambda$}-free probability}, Infin. Dimens.
  Anal. Quantum Probab. Relat. Top. \textbf{7} (2004), no.~1, 27--41 (English).

\bibitem[MN21]{MageeNaud2}
M.~Magee and F.~Naud, \emph{Extension of {A}lon's and {F}riedman's conjectures
  to {S}chottky surfaces}, 2021, Preprint, arXiv:2106.02555.

\bibitem[MSS15]{MSSI}
Adam~W. Marcus, Daniel~A. Spielman, and Nikhil Srivastava, \emph{Interlacing
  families. {I}: {Bipartite} {Ramanujan} graphs of all degrees}, Ann. Math. (2)
  \textbf{182} (2015), no.~1, 307--325 (English).

\bibitem[Nai59]{Na59}
Mark~Aronovich Naimark, \emph{Normed rings}, P. Noordhoff, 1959.

\bibitem[Pol92]{Po92}
Mark Pollicott, \emph{Exponential mixing for the geodesic flow on hyperbolic
  three-manifolds}, Journal of Statistical Physics \textbf{67} (1992), no.~3,
  667--673.

\bibitem[Pud15]{PUDER}
D.~Puder, \emph{Expansion of random graphs: new proofs, new results}, Invent.
  Math. \textbf{201} (2015), no.~3, 845--908. \MR{3385636}

\bibitem[RS19]{RainoneSchafhauser}
Timothy Rainone and Christopher Schafhauser, \emph{Crossed products of nuclear
  {$C^*$}-algebras and their traces}, Adv. Math. \textbf{347} (2019), 105--149
  (English).

\bibitem[Sch23]{schafhauser2023finite}
Christopher Schafhauser, \emph{Finite dimensional approximations of certain
  amalgamated free products of groups}, 2023, arXiv:2306.02498.

\bibitem[ST78]{St.To78}
Robert~J. Stanton and Peter~A. Tomas, \emph{A note on the {{Kunze-Stein}}
  phenomenon}, Journal of Functional Analysis \textbf{29} (1978), no.~2,
  151--159.

\bibitem[SW16]{SW}
Roland Speicher and Janusz Wysocza{\'n}ski, \emph{Mixtures of classical and
  free independence}, Arch. Math. \textbf{107} (2016), no.~4, 445--453
  (English).

\bibitem[TWW17]{TikuisisWhiteWinter}
Aaron Tikuisis, Stuart White, and Wilhelm Winter, \emph{Quasidiagonality of
  nuclear {$C^*$}-algebras}, Ann. Math. (2) \textbf{185} (2017), no.~1,
  229--284 (English).

\bibitem[Voi85]{VoiculescuSymmetries}
Dan Voiculescu, \emph{Symmetries of some reduced free product {$C*$}-algebras},
  Operator Algebras and their Connections with Topology and Ergodic Theory
  (Berlin, Heidelberg) (Huzihiro Araki, Calvin~C. Moore, {\c{S}}erban-Valentin
  Stratila, and Dan-Virgil Voiculescu, eds.), Springer Berlin Heidelberg, 1985,
  pp.~556--588.

\bibitem[Voi90]{VoiculescuCircular}
\bysame, \emph{Circular and semicircular systems and free product factors},
  Operator algebras, unitary representations, enveloping algebras, and
  invariant theory, {Proc}. {Colloq}. in {Honour} of {J}. {Dixmier},
  {Paris}/{Fr}. 1989, {Prog}. {Math}. 92, 45-60 (1990)., 1990.

\bibitem[Voi93]{voic_quasidiag}
\bysame, \emph{Around quasidiagonal operators}, Integral Equations Operator
  Theory \textbf{17} (1993), no.~1, 137--149. \MR{1220578}

\bibitem[Voi95]{Vo1995}
\bysame, \emph{Operations on certain non-commutative operator-valued random
  variables}, Ast{\'e}risque \textbf{232} (1995), no.~1, 243--275.

\bibitem[Wis12]{WiseRAAGs}
Daniel~T. Wise, \emph{From riches to {Raags}: 3-manifolds, right-angled {Artin}
  groups, and cubical geometry.}, CBMS Reg. Conf. Ser. Math., vol. 117,
  Providence, RI: American Mathematical Society (AMS); Washington, DC:
  Conference Board of the Mathematical Sciences (CBMS), 2012 (English).

\end{thebibliography}

\noindent Michael Magee, \\
\noindent Department of Mathematical Sciences,\\
Durham University, \\
Lower Mountjoy, DH1 3LE Durham,\\
United Kingdom\\
\\
IAS Princeton,\\
School of Mathematics,\\
1 Einstein Drive,\\
Princeton 08540\\

\noindent \texttt{michael.r.magee@durham.ac.uk}~\linebreak{}

\noindent Joe Thomas, \\
Department of Mathematical Sciences,\\
Durham University, \\
Lower Mountjoy, DH1 3LE Durham,\\
United Kingdom

\noindent \texttt{joe.thomas@durham.ac.uk}\\

\end{document}